\definecolor{darkgreen}{rgb}{0,0.45,0} 
\theoremstyle{plain}
\newtheorem{theorem}{Theorem}[section]
\newtheorem{proposition}[theorem]{Proposition}
\theoremstyle{remark}
\newtheorem{remark}[theorem]{Remark}
\theoremstyle{definition}
\newtheorem{example}[theorem]{Example}
\newtheorem{definition}[theorem]{Definition}
\numberwithin{equation}{section}
\DeclareMathOperator{\Id}{Id}
\DeclareMathOperator{\id}{id}
\DeclareMathOperator{\Ker}{Ker}
\DeclareMathOperator{\End}{End}
\DeclareMathOperator{\sign}{sign}
\DeclareMathOperator{\Hom}{Hom}
\DeclareRobustCommand{\No}{\ifmmode{\nfss@text{\textnumero}}\else\textnumero\fi} 
\newbox\skewpullbackbox
\newbox\skwepullbackbox
\newbox\ksewpullbackbox
\newbox\pullbackbox
\newbox\pullbackabox
\newcommand{\pullbacka}{\copy\pullbackabox}
\newbox\pullbackbbox
\newbox\pullbackcbox
\newbox\pullbackdbox
\newbox\pushoutbox
\newbox\pushoutabox
\begin{document}

\title[On a general notion of a polynomial identity and codimensions]{On a general notion of a polynomial identity and codimensions}

\author{A.\,S. Gordienko}

\email{alexey.gordienko@math.msu.ru}

\address{Department of Higher Algebra,
	Faculty of Mechanics and  Mathematics,
	M.\,V.~Lomonosov Moscow State University,
	Leninskiye Gory, d.1,  Main Building, GSP-1, 119991 Moskva, Russia }

\keywords{Polynomial identity, $\Omega$-algebra, $\Omega$-magma, codimension, braided monoidal category, Hopf algebra.}

\begin{abstract} Using the braided version of Lawvere's algebraic theories and Mac Lane's PROPs, we introduce polynomial identities
	for arbitrary algebraic structures in a braided monoidal category $\mathcal C$ as well as their codimensions
	in the case when $\mathcal C$ is linear over some field. The new cases include coalgebras, bialgebras, Hopf algebras, braided vector spaces, Yetter--Drinfel'd modules, etc.
		We find bases for polynomial identities and calculate codimensions in some important particular cases.

\end{abstract}

\subjclass[2020]{Primary 16R10; Secondary 08B20, 16R50, 16T05, 16T15, 17A01, 18C10, 18M15.}

\thanks{The author is partially supported by the Russian Science Foundation grant \No\,22-11-00052}

\maketitle

\tableofcontents

\newpage

\section{Introduction}
Vector spaces $A$ with linear operations $A^{\otimes m} \to A^{\otimes n}$ where $m,n \in \mathbb Z_+$, e.g. coalgebras, bialgebras, Hopf algebras, braided vector spaces, Yetter--Drinfel'd modules, etc., find their applications in many areas of mathematics and physics. Such structures admit a universal approach via the notion of an \textit{$\Omega$-algebra} over a field
where $\Omega$ is an arbitrary set of symbols~$\omega$ of operations $\omega_A \colon A^{\otimes s(t)} \to A^{\otimes t(\omega)}$ of some arity $s(\omega)$ and coarity $t(\omega)$. (See an example of studying (co)actions on $\Omega$-algebras over fields e.g. in~\cite{AGV2}.) On the other hand, analogous objects, which will be called below \textit{$\Omega$-magmas}, can be introduced in any monoidal category.

Studying polynomial identities in an algebraic structure is an important aspect of studying the algebraic structure itself that leads to a discovery of new classes of such structures defined in terms of polynomial identities, which in turn may help in solving known problems on them.
Furthermore, when a basis for polynomial identities in a concrete algebra $A$ over a field is being calculated, certain numerical characteristics of polynomial identities, called codimensions, come into play naturally. The $n$th codimension $c_n(A)$ is just the dimension of the vector space of all linear maps $A^{\otimes n} \to A$ that can be realized by multilinear polynomials.
It is known~\cite[Section~6.2]{GiaZai} that there exists an tight relationship between the structure of~$A$ and the asymptotic behavior of  $c_n(A)$.

 A polynomial identity is traditionally introduced as a functional equality $f\equiv g$ of polynomials (or just monomials) $f,g$ in the corresponding signature where the polynomials $f,g$ are usually elements of the corresponding free algebraic structure. (We recall this in Section~\ref{SubsectionAlgebraicStructures} below.) Unfortunately, this approach may not work for $\Omega$-algebras with $t(\omega)\geqslant 2$ for some $\omega \in \Omega$, since, say, free coalgebras do not exist. (There exist only cofree coalgebras, see~\cite{SweedlerBook}.) In order to overcome this difficulty,  Mikhail Kochetov introduced~\cite{KochetovCoalgId} in 2000 the notion of a polynomial identity for a coalgebra $C$ as a polynomial identity for the algebra $C^*$ dual to~$C$. The same idea was used even earlier, in 1994,  by J.~Anquella,  T.~Cort\'es and F.~Montaner
 to introduce varieties of coalgebras~\cite{AnqCortMont}. However this did not answer the question what would be a polynomial identity simultaneously involving all possible operations in, say, a Hopf algebra, which is both an algebra and a coalgebra.

Analyzing the definition of a polynomial identity in a traditional algebraic structure $A$ (in the category $\mathbf{Sets}$ of small sets) of an arbitrary signature $\Omega$, we see in Section~\ref{SubsectionAlgebraicStructures} below that every $\Omega$-monomial can be understood as a composition of arrows corresponding to symbols of $\Omega$, swaps and operations $\Delta$ and $\varepsilon$ defining the unique comonoid structure on the set $A$. It turns out that $\Delta$ and $\varepsilon$ are needed to consider ``non-multilinear'' polynomial identities, i.e. where a letter occurs in one of the sides more than once or in only one side of the identity. All of this inspires to carry out Lawvere's approach to algebraic structures~\cite{Lawvere}, which establishes a 1-1 correspondence between algebraic structures
of a fixed signature $\Omega$ and product preserving functors from a certain category $\mathbb{A}$, called an \textit{algebraic theory}, to $\mathbf{Sets}$. Here $\Omega$-monomials can be viewed just as morphisms in~$\mathbb{A}$. Moreover, identifying certain morphisms in $\mathbb{A}$, one can describe in this way varieties of algebraic structures, i.e. classes of algebraic structures satisfying a fixed set of polynomial identities. In 1965, Mac Lane considered~\cite{MacLane65} symmetric monoidal analogs of algebraic theories, which he called PROPs (= ``product and permutation categories'').
In 1996 Martin Markl used linear PROPs (which he shortly referred to as just ``theories'') to study deformations of arbitrary $\Omega$-magmas~\cite{MarklPROPDef}. In 2002 Teimuraz Pirashvili described explicitly the PROP corresponding to the notion of a bialgebra over a field~\cite{PirashviliPROPBialgebra}. Using the terminology of the ``traditional'' PI-theory, Pirashvili's result can be compared with a description of a relatively free algebra for some variety of algebras.
 
 In the current article we use braided strict monoidal categories that we call \textit{braided monoidal algebraic theories} (BMATs for short). BMATs are just braided versions of PROPs.
Given a set $\Omega$ together with maps $s,t \colon \Omega \to \mathbb Z_+$, we describe explicitly a BMAT $\mathcal M(\Omega)$ such that for every braided monoidal category $\mathcal C$ there exists a 1-1 correspondence between $\Omega$-magmas in $\mathcal C$ and braided strong monoidal functors $\mathcal M(\Omega) \to \mathcal C$. The morphisms in $\mathcal M(\Omega)$
 are called \textit{$\Omega$-monomials}. When hom-sets in $\mathcal C$ are vector spaces over a field $\mathbbm{k}$, we define the category
$\mathcal P(\mathbbm{k}, \Omega)$ of $\Omega$-polynomials with coefficients in $\mathbbm{k}$ and codimensions of polynomial identities. Varieties of $\Omega$-magmas are introduced in a natural way.

It turns out that if one considers $\Omega$ corresponding to a coalgebra, then $\Omega$-polynomial identities with coefficients in $\mathbbm{k}$ in a coalgebra $C$ over $\mathbbm{k}$, having the domain $1$, can be identified with multilinear polynomial identities in $C$ introduced by Mikhail Kochetov in~\cite{KochetovCoalgId}.  There are two reasons why non-multilinear polynomial identities in coalgebras cannot fit into the framework of BMATs at all. First, as we have already mentioned, when we consider non-multilinear polynomial identities in algebras over $\mathbbm{k}$, we must actually use maps $\Delta$ and $\varepsilon$ from the comonoid structure in~$\mathbf{Sets}$, which are non-linear, so we drop out of the category $\mathbf{Vect}_\mathbbm{k}$ of $\mathbbm{k}$-vector spaces. 
Second, the procedure of making the algebra $C^*$ out of a coalgebra $C$ is essentially linear, so it is hard to expect that it would establish a correspondence between anything non-linear. On the other hand, over a field of characteristic $0$ all polynomial identities in an algebra are consequences of its multilinear polynomial identities~\cite[Theorem 1.3.8]{GiaZai}.

If an $\Omega$-algebra $A$ over a field $\mathbbm k$ is endowed with an additional structure, e.g. a group grading or a structure of a (co)module algebra over a Hopf algebra, it is natural to include this additional structure in the signature of polynomial identities~\cite{AljaKassel, BahturinLinchenko, BereleHopf, GiaZai, ASGordienko15}.

On the one hand, if $A$ is an $H$-module for a unital associative algebra $H$, e.g. $A$ is endowed with a generalized $H$-action
(we recall the definition at the end of Section~\ref{SubsectionPIHmod}), one can just add the symbols of operators from $H$ to the signature $\Omega$ and consider polynomial $\Omega \sqcup H$-identities. On the other hand, if $H$ is a Hopf algebra,
the categories ${}_H\mathsf{Mod}$ and $\mathsf{Comod}^H$ of left $H$-modules and right $H$-comodules, respectively,
are monoidal. If $G$ is a group, then the category of $G$-graded vector spaces is monoidal too.
Finally, a graded or a (co)module $\Omega$-algebra $A$ is just an $\Omega$-magma in the category $\mathcal C$ of graded vector spaces or (co)modules, respectively, which prompts to take $\mathcal C$ as the base category and try to define polynomial identities
in $A$ in some intrinsic way.  However if we consider the category $\mathcal P(\mathbbm k, \Omega)$ of $\Omega$-polynomals with coefficients in $\mathbbm k$, the images of morphisms in $\mathcal P(\mathbbm k, \Omega)$ under $\mathcal E_A$ will be the linear maps $A^{{}\otimes m} \to A^{{}\otimes n}$ resulting only from the operations $\omega_A$, $\omega \in \Omega$, but not from the additional structure. In order to indeed include the additional structure in the signature of polynomial identities,
in Section~\ref{SectionPIGr(Co)mod} we use the reconstruction technique (see e.g. \cite[Chapter 5]{EGNObook}).
In this way for $\mathcal C = {}_H\mathsf{Mod}$ we get all multilinear polynomial $H$-identities.
For $\mathcal C = \mathsf{Comod}^H$ we in fact recover multilinear polynomial $H^*$-identities where the $H^*$-action is induced by the $H$-comodule structure. However, if the Hopf algebra $H$ is finite dimensional, the polynomial $\Omega \sqcup H^*$-identities obtained can be identified with multilinear polynomial $H$-identities in $H$-comodule algebras. 

In Section~\ref{SectionExamplesApplications} we study polynomial identities and their codimensions in vector spaces, (co)commutative Hopf algebras and Yetter~--- Drinfeld modules.


\section{$\Omega$-monomials and polynomial identities in $\Omega$-magmas}

\subsection{Monoids, comonoids and Hopf monoids}\label{SubsectionMonComonHopfMon}

We refer the reader to e.g.~\cite{EGNObook} for an account of monoidal categories.

Let $\mathcal C$ be a monoidal category with a monoidal product $\otimes$ and a monoidal unit $\mathbbm{1}$.
Recall that a \textit{monoid}  $(M,\mu, u)$ in $\mathcal C$ is an object $M$ together with morphisms
$\mu \colon M \otimes M \to M$ and $u \colon \mathbbm{1} \to M$ making the diagrams below commutative:
$$ \xymatrix{(M \otimes M) \otimes M \ar[rr]^{\sim} \ar[d]^{\mu \otimes \id_M} & &  M \otimes (M \otimes M) \ar[d]^{\id_M \otimes \mu} \\
	M \otimes M \ar[rd]^\mu & & M \otimes M \ar[ld]_\mu \\
	& M & 
} \qquad \xymatrix{ M  \ar[r]^(0.4)\sim  \ar[d]_\sim \ar@{=}[rdd]      &     M \otimes \mathbbm{1} \ar[d]^{\id_M\otimes u} \\
\mathbbm{1} \otimes M  \ar[d]_{u\otimes \id_M}  &     M \otimes M \ar[d]^\mu \\
 M \otimes M   \ar[r]^(0.55)\mu               &     M
} 
$$

\begin{example}
	Recall that the category $\mathbf{Sets}$ of small sets is a monoidal category with the Cartesian monoidal product $\times$
	and the neutral object $\lbrace * \rbrace$, a single element set. Monoids  in $\mathbf{Sets}$
	are just ordinary set-theoretical monoids $M$ where $\mu(m,n) = mn$ for every $m,n\in M$ and $u(*)= 1_M$.
\end{example}	

\begin{example} The category $\mathbf{Vect}_\mathbbm{k}$
	of vector spaces over a field $\mathbbm{k}$ is monoidal too where the monoidal product is the tensor product $\otimes$
	and the monoidal unit is the base field $\mathbbm{k}$. Monoids in $\mathbf{Vect}_\mathbbm{k}$ are just unital associative algebras over the field $\mathbbm{k}$.
\end{example}	

A comonoid in a monoidal category $\mathcal C$ is a monoid  $(C,\Delta, \varepsilon)$ in the dual category $\mathcal C^\mathrm{op}$. The corresponding morphisms $\Delta \colon C \to C \otimes C$
and $\varepsilon \colon C \to \mathbbm{1}$ are called the \textit{comultiplication}
and the \textit{counit}, respectively.

Comonoids in $\mathbf{Vect}_\mathbbm{k}$ are called \textit{coalgebras} over the field $\mathbbm{k}$.

All comonoids in $\mathbf{Sets}$ are trivial. Namely, every set $X$ admits exactly one structure of a comonoid
where the comultiplication $\Delta \colon X \to X\times X$ is the diagonal map, i.e. $\Delta x = (x,x)$ for all $x\in X$, and the counit $\varepsilon \colon X \to \lbrace * \rbrace$ is trivial, i.e. $\varepsilon(x) = *$ for all $x\in X$.

Recall that if the category $\mathcal C$ is braided, then $\mathsf{Mon}(\mathcal C)$ is a monoidal category too.
Objects of the category $\mathsf{Comon}(\mathsf{Mon}(\mathcal C))$ (which is isomorphic to $\mathsf{Mon}(\mathsf{Comon}(\mathcal C))$) are called \textit{bimonoids} in $\mathcal C$.

If $(A,\mu_A, u_A)$ is a monoid and $(C,\Delta_C,\varepsilon_C)$ is a comonoid in a monoidal category $\mathcal C$,
then the set $\mathcal C (C,A)$ of all morphisms $C\to A$ in $\mathcal C$ admits a structure of an ordinary monoid:
the multiplication is defined by
$$\varphi * \psi := \mu_A (\varphi \otimes \psi)\Delta_C \text{ for all } \varphi,\psi \in \mathcal C (C,A)$$
and $u_A\varepsilon_C$ is the identity element.
The monoid $\mathcal C (C,A)$ is called the \textit{convolution monoid}.

A bimonoid $H$ in a braided monoidal category $\mathcal C$ is called a \textit{Hopf monoid}
if $\id_H \in \mathcal C (H,H)$ admits an inverse $S \colon H \to H$, which is called the \textit{antipode}. We denote the category of Hopf monoids in $\mathcal C$ by $\mathsf{Hopf}(\mathcal C)$.

\begin{example}
	Hopf monoids in $\mathbf{Vect}_\mathbbm{k}$, where $\mathbbm{k}$ is a field, are exactly Hopf algebras over~$\mathbbm{k}$.
\end{example}

\begin{example}	If $G$ is a group, then the group algebra $\mathbbm{k} G$ is a Hopf algebra where $\Delta g := g\otimes g$, $\varepsilon(g)=1$, $Sg= g^{-1}$ for all $g\in G$.
\end{example}

\begin{example}
	Hopf monoids in $\mathbf{Sets}$ are exactly groups.
\end{example}

Denote by $\tau_{X,Y} \colon X \otimes Y \mathrel{\widetilde\to} Y \otimes X$ the braiding in $\mathcal C$.

For a monoid $(A,\mu_A, u_A)$ denote by $A^{\mathrm{op}}$
the monoid  $(A,\mu_A \tau_{A,A}, u_A)$. Analogously, for a comonoid $(C,\Delta_C,\varepsilon_C)$ denote by $C^{\mathrm{cop}}$ the comonoid  $(C,\tau_{C,C}\Delta_C,\varepsilon_C)$.

Standard convolution techniques (see e.g.~\cite[Section 4.2]{DNR} or~\cite[Lemma 35, Proposition 36]{Porst1}) combined with a diagram chasing show that $S \colon H \to H^\mathrm{op}$ is a monoid homomorphism and $S \colon H^\mathrm{cop} \to H$ is a comonoid homomorphism.
Moreover, every bimonoid homomorphism of Hopf monoids commutes with (or preserves) the antipode.

\subsection{Algebraic structures of a given signature}\label{SubsectionAlgebraicStructures} Before considering different generalizations,
below we recall a classical definition of an algebraic structure $A$ of a given signature $\Omega$ and polynomial identities in $A$ and notice that the trivial comonoid structure on $A$ is present in the constructions implicitly.

Let $\Omega$ be a set with a map $s \colon \Omega\to \mathbb Z_+$. Recall that an \textit{algebraic structure of signature $\Omega$}
is a set $A$ endowed with maps $\omega_A \colon A^{s(\omega)} \to A$ for every $\omega \in \Omega$, which are called \textit{$s(\omega)$-ary operations}. If $A$ and $B$ are algebraic structures of the same signature $\Omega$, then
a map $f\colon A \to B$ is called a \textit{homomorphism} if $$f\bigl(\omega_A(a_1, \ldots, a_{s(\omega)})\bigr) = 
\omega_B\bigl(f(a_1), \ldots, f(a_{s(\omega)})\bigr)\text{ for all }\omega \in \Omega\text{ and  }a_1, \ldots, a_{s(\omega)} \in A.$$

\begin{example}
	A group can be treated as an algebraic structure of signature $\Omega = \left\lbrace \cdot, 1, (-)^{-1} \right\rbrace$
	where $s(\cdot)=2$, $s(1)=0$ and $s\left((-)^{-1}\right)=1$.
\end{example}
\begin{example}
	A vector space over a field $\mathbbm k$ can be treated as an algebraic structure of signature $\Omega = \mathbbm k \sqcup \left\lbrace  +, 0, - \right\rbrace$
	where $s(\lambda)=1$ for all $\lambda \in \mathbbm k $, $s(+)=2$, $s(0)=0$ and $s\left(-\right)=1$.
\end{example}

\textit{(Absolutely) free} algebraic structures $\mathcal F_\Omega (X)$ are constructed inductively: given a set $X$, let
\begin{enumerate}
	\item  $X \subseteq \mathcal F_\Omega (X)$;
	\item  $\omega(w_1, \ldots, w_{s(\omega)}) \in \mathcal F_\Omega (X)$ for all $\omega \in  \Omega$ and all 
	$w_1, \ldots, w_{s(\omega)}\in \mathcal F_\Omega (X)$.
\end{enumerate}
The operations $\omega_{\mathcal F_\Omega (X)} \colon \mathcal F_\Omega (X)^{s(\omega)} \to  \mathcal F_\Omega (X)$ are defined as
follows: $$\omega_{\mathcal F_\Omega (X)}(w_1, \ldots, w_{s(\omega)}):= \omega(w_1, \ldots, w_{s(\omega)})
\text{ for all }w_1, \ldots, w_{s(\omega)}\in \mathcal F_\Omega (X).$$

Then for every algebraic structure $A$ of signature $\Omega$ and every map $f_0 \colon X \to A$ there exists a unique homomorphism $f \colon \mathcal F_\Omega (X) \to A$ such that $f\iota = f_0$ where $\iota$ is the embedding $ X \hookrightarrow \mathcal F_\Omega (X)$. By this reason the elements of $\mathcal F_\Omega (X)$ are called \textit{derived operations} and $f(w)$
is the \textit{value} of $w\in \mathcal F_\Omega (X)$ under the substitution $f_0$. By $w_A$ 
we denote the map $A^{m} \to A$ induced by each $w\in \mathcal F_\Omega (X)$ where $m$ is the number of letters $x_1, \ldots, x_m \in X$ actually appearing in $w$ and $w_A(a_1, \ldots, a_m):=f(w)$ for every $a_1, \ldots, a_m \in A$ where $f_0 \colon X \to A$
is some map such that $f_0(x_i) = a_i$ for every $1\leqslant i \leqslant m$.

An expression $u \equiv v$ where $u,v \in \mathcal F_\Omega (X)$ is called a \textit{polynomial identity} in  an algebraic structure $A$ of signature $\Omega$ if $f(u)=f(v)$ for all maps $f_0 \colon X \to A$. In other words, $u \equiv v$ is a polynomial identity in $A$
if $u_A=v_A$ as maps. (If some letter from $X$ appears only in one of the expressions $u$ and $v$, then we add a fictitious variable to the map corresponding to the other expression.)

Note that maps $w_A$, where $w\in \mathcal F_\Omega (X)$, are compositions of maps

\begin{enumerate}
	\item $(\id_A)^k \times \omega_A \times (\id_A)^\ell$ where $\omega \in \Omega$;
	\item $(\id_A)^k \times \tau \times (\id_A)^\ell$ where $\tau \colon A^2 \to A^2$ is the swap: $\tau(a,b):=(b,a)$ for all $a,b \in A$;
	\item $(\id_A)^k \times \Delta \times (\id_A)^\ell$ where $\Delta \colon A \to A^2$ is the diagonal map: $\Delta(a):=(a,a)$ for all $a \in A$;
	\item $(\id_A)^k \times \varepsilon \times (\id_A)^\ell$ where $\varepsilon$ is the unique map from $A$ to the one element set $\lbrace * \rbrace$;
	\item $(\id_A)^k \times \alpha \times (\id_A)^\ell$  where $\alpha$ is one of the identifications $\lbrace * \rbrace \times A \mathrel{\widetilde\to} A$ and	$ A \times \lbrace * \rbrace  \mathrel{\widetilde\to} A$.
\end{enumerate}
(The numbers $k,\ell \in \mathbb Z_+$ are arbitrary.)

	The last two types of maps are needed to be able to include fictitious variables.
	
	Recall that the category $\mathbf{Sets}$ is symmetric with the swap $\tau$ and the maps $\Delta$ and $\varepsilon$
	defined above coincide with those from the unique structure of a comonoid in $\mathbf{Sets}$ on the set $A$. All of this makes it natural to transfer the definition of a polynomial identity to similar structures in an arbitrary braided monoidal category.
	
	Before making the formal definition, we notice that in this sense it is reasonable to treat the comonoid maps $\Delta$ and $\varepsilon$ as the part of an algebraic structure $A$ that allows ``non-multilinear'' polynomial identities.

\subsection{$\Omega$-magmas}\label{SubsectionOmegaMagmas}

Let $\Omega$ be a set together with maps $s,t \colon \Omega \to \mathbb Z_+$. We will refer to $\Omega$ as the \textit{signature} too. 

\begin{definition} An \textit{$\Omega$-magma} in a monoidal category $\mathcal C$ is 
	an object $A$ endowed with morphisms $\omega_A \colon A^{\otimes s(\omega)} \to
	A^{\otimes t(\omega)}$ for every $\omega \in \Omega$. Here use the convention that $A^{\otimes 0} := \mathbbm{1}$, the neutral object in $\mathcal C$.
\end{definition}

\begin{example}\label{ExampleMagma}
	Every magma (i.e. a set with a binary operation) is just an $\Omega$-magma in $\mathbf{Sets}$	for $\Omega=\lbrace\mu \rbrace$, $s(\mu)=2$, $t(\mu)=1$.
\end{example}

\begin{example}\label{ExampleAlgebraOmega}
	Every (neither necessarily associative, nor necessarily unital) algebra over a field $\mathbbm{k}$
	is just an $\Omega$-magma in $\mathbf{Vect}_\mathbbm{k}$	for $\Omega=\lbrace\mu \rbrace$, $s(\mu)=2$, $t(\mu)=1$.
\end{example}

\begin{example}\label{ExampleUnitalAlgebraOmega}
	Every unital algebra $A$ over a field $\mathbbm{k}$ is an example of an $\Omega$-magma in $\mathbf{Vect}_\mathbbm{k}$	for $\Omega=\lbrace\mu, u \rbrace$, $s(\mu)=2$, $t(\mu)=1$, $s(u)=0$, $t(u)=1$, where $u_A \colon \mathbbm{k} \to A$ is defined by $u_A(\alpha)=\alpha 1_A$
	for $\alpha \in \mathbbm{k}$. An ordinary monoid is an example of an $\Omega$-magma in $\mathbf{Sets}$ for the same $\Omega$.
\end{example}		

\begin{example}\label{ExampleCoalgebraOmega}
	Every coalgebra $C$ over a field $\mathbbm{k}$ is an example of an $\Omega$-magma in $\mathbf{Vect}_\mathbbm{k}$	for $\Omega=\lbrace\Delta, \varepsilon \rbrace$, $s(\Delta)=1$, $t(\Delta)=2$, $s(\varepsilon)=1$, $t(\varepsilon)=0$.
\end{example}	

In general, $\Omega$-magmas in $\mathbf{Vect}_\mathbbm{k}$ are called \textit{$\Omega$-algebras} over $\mathbbm{k}$~\cite{AGV2}.

\begin{example}\label{ExampleBraidingOmega}
	An object $A$ endowed with a braiding $\sigma_A \colon A \otimes A \to A \otimes A$
	is an example of an $\Omega$-magma for $\Omega=\lbrace \sigma \rbrace$, $s(\sigma)=2$, $t(\sigma)=2$.
\end{example}		

Let $A$ and $B$ be $\Omega$-magmas in a monoidal category $\mathcal C$. A morphism $f\colon A \to B$ is called an \textit{$\Omega$-magma homomorphism}
if for every $\omega \in \Omega$ the diagram below is commutative:
$$
\xymatrix{
	A^{\otimes s(\omega)} \ar[rr]^-{f^{\otimes s(\omega)}} \ar[d]_{\omega_A} && B^{\otimes s(\omega)} \ar[d]^{\omega_B}\\
	A^{\otimes t(\omega)} \ar[rr]^-{f^{\otimes t(\omega)}}  && B^{\otimes t(\omega)} 
}
$$

Denote by $\Omega\text{-}\mathsf{Magma}(\mathcal C)$ the category of $\Omega$-magmas in $\mathcal C$.

\subsection{Braided monoidal algebraic theories}\label{SubsectionBMAT}

A \textit{braided monoidal algebraic theory} (BMAT for short) is a braided strict monoidal category $\mathbb{A}$
where the objects are non-negative integers $n\in \mathbb Z_+$ and $m\otimes n = m+n$ for all $m, n\in \mathbb Z_+$.
(Hence $0$ is automatically the neutral object.)
When $\mathbb{A}$ is symmetric, $\mathbb{A}$ is just a PROP in the sense of Mac Lane~\cite{MacLane65}. When the monoidal product in $\mathbb{A}$ is the categorical product, i.e. $\mathbb{A}$ is Cartesian, then $\mathbb{A}$ is just an algebraic theory in the sense of Lawvere~\cite{Lawvere}.

Fix a set $\Omega$ and maps $s,t \colon \Omega \to \mathbb Z_+$. We need to introduce expressions in the signature $\Omega$. In fact, they will be just morphisms in the BMAT~$\mathcal M(\Omega)$ defined below.

Consider the free monoid $(M, \bullet)$ with the set of free generators $$\Omega \sqcup \lbrace \id_m, \tau_{m,n}, \tau^{-1}_{m,n} \mid m,n \in \mathbb Z_+ \rbrace$$ and
define monoid homomorphisms $s \colon M \to\mathbb  Z_+$
and $t \colon M \to \mathbb Z_+$
extending maps $s, t \colon \Omega \to \mathbb Z_+$, such that
\begin{enumerate}
\item $s(\id_m)=t(\id_m)=m$;
\item $s(\tau_{m,n})=t(\tau_{m,n})=s(\tau^{-1}_{m,n})=t(\tau^{-1}_{m,n})=m+n$.
\end{enumerate}

Now consider the directed graph $\Gamma$ with the set of vertices $\mathbb Z_+$ and the set of edges $M$
where an edge $w \in M$ goes from the vertex $s(w)$ to the vertex $t(w)$.

Recall that an equivalence relation $\sim$ on hom-sets $X (a,b)$ of a category $X$ is a \textit{congruence} if
$g_1 f_1 \sim g_2 f_2$ for all $f_1, f_2 \in X (a,b)$, $g_1, g_2 \in X (b,c)$
and objects $a,b,c$ such that $f_1 \sim f_2$ and $g_1 \sim g_2$.
The \textit{factor category} $X/{\sim}$ is the category with the same objects as in $X$ and the hom-sets $X (a,b)/{\sim}$.

Let $X$ be the category where the objects are non-negative integers and the morphisms are finite paths
in $\Gamma$.
If $m,n \in \mathbb Z_+$, define $m \otimes n :=  m+n$.
 If $w_1 \ldots w_s \in X(m,n)$ and  $w_1'\ldots w_t' \in X(m',n')$
  for some $m,n,m',n' \in \mathbb Z_+$ and $w_1, \ldots, w_s, w_1', \ldots, w_t' \in M$, then define 
$w_1 \ldots w_s \otimes w_1' \ldots w_t' \in X(m+m',n+n')$ 
as $$(w_1 \bullet  \id_{n'}) \dots (w_s \bullet  \id_{n'})
 (\id_m \bullet  w_1') \ldots (\id_m \bullet  w_t').$$

Denote by $\mathcal M(\Omega)$ the factor category $X/{\sim}$ where  $\sim$ is the minimal congruence making 
\begin{enumerate}
\item $\id_m$ the identity morphisms of objects $m$;
\item $\tau^{-1}_{m,n}$ the inverses of $\tau_{m,n}$;
\item $X/{\sim}$ a braided strict monoidal category with the braidings $\tau_{m,n}$,
the neutral object~$0$ and the monoidal product $\otimes$.
\end{enumerate}

Then $\mathcal M(\Omega)$ is a BMAT, which we call the \textit{category of $\Omega$-monomials}.
The morphisms in $\mathcal M(\Omega)$ are called \textit{$\Omega$-monomials}.
 Define $s(f):= m$ and $t(f):= n$ for $f \in \mathcal M(\Omega)(m,n)$. Note that $1$ is an $\Omega$-magma in $\mathcal M(\Omega)$.
 
 \begin{remark}
 	Denote by $\mathbf{BMAT}$ the category of BMATs where the morphisms are braided strict monoidal functors and by $U \colon \mathbf{BMAT} \to ((\mathbb Z_+ \times \mathbb Z_+) \downarrow \mathbf{Sets})$ the functor
 	that assigns every BMAT $\mathbb A$ the function $(m,n) \mapsto \mathbb A(m,n)$, $m,n\in \mathbb Z_+$. We may regard $U$ as a forgetful functor, since $U\mathbb A $ is just the collection of hom-sets of $\mathbb A$ without any structure on them.
 	Then we get a bijection
 	$$\mathbf{BMAT}(\mathcal M(\Omega), \mathbb A) \mathbin{\widetilde\to} ((\mathbb Z_+ \times \mathbb Z_+) \downarrow \mathbf{Sets})(\Omega, U \mathbb A)$$
 	natural in $\Omega$ and $\mathbb A$. Here we treat the signature $\Omega$ as a function $\mathbb Z_+ \times \mathbb Z_+ \to \mathbf{Sets}$
 	where $\Omega(m,n) := \lbrace \omega \in \Omega \mid s(\omega)=m,\ t(\omega)=n \rbrace$. In this sense the functor $\mathcal M(-)$ is indeed the free functor left adjoint to the forgetful functor $U$.
 \end{remark}

Given braided monoidal categories $\mathcal C$ and $\mathcal D$, denote by $\mathbf{BSMF}(\mathcal C, \mathcal D)$ the category of braided strong monoidal functors $\mathcal C \to \mathcal D$ where morphisms between $F,G \colon \mathcal C \to \mathcal D$
are all monoidal transformations $\alpha \colon F \Rightarrow G$.

\begin{proposition}\label{PropositionOmegaMagmaBSMFunctorsEquivalence} Let $\Omega$ be a set together with maps $s,t \colon \Omega \to \mathbb Z_+$
	and let $\mathcal C$ be a braided monoidal category. Then the categories $\Omega\text{-}\mathsf{Magma}(\mathcal C)$ and $\mathbf{BSMF}(\mathcal M(\Omega), \mathcal C)$ are equivalent.
\end{proposition}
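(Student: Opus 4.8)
The plan is to exhibit an explicit pair of quasi-inverse functors between the two categories, using the object $1 \in \mathcal M(\Omega)$ as the \emph{generic} $\Omega$-magma. Since both $\Omega\text{-}\mathsf{Magma}(\mathcal C)$ and $\mathbf{BSMF}(\mathcal M(\Omega),\mathcal C)$ are 2-functorial in the braided monoidal category $\mathcal C$ and send braided monoidal equivalences to equivalences, I would first invoke Mac Lane's strictification theorem to assume without loss of generality that $\mathcal C$ is strict. This removes all bookkeeping with associativity and unit coherence isomorphisms and lets me treat $A^{\otimes n}$ unambiguously.

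In one direction, define $\Phi\colon \mathbf{BSMF}(\mathcal M(\Omega),\mathcal C)\to \Omega\text{-}\mathsf{Magma}(\mathcal C)$ by sending a braided strong monoidal functor $F$ to the object $F(1)$, equipped with the operations $\omega_{F(1)}:=F(\omega)$ (using the monoidal structure of $F$ to identify $F(1)^{\otimes n}$ with $F(n)$), and a monoidal transformation $\alpha\colon F\Rightarrow G$ to its component $\alpha_1$. Naturality of $\alpha$ at the morphism $\omega\colon s(\omega)\to t(\omega)$, combined with the monoidal-transformation axioms expressing $\alpha_n$ through $\alpha_1$, yields exactly the $\Omega$-magma homomorphism condition for $\alpha_1$. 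In the other direction, define $\Psi\colon \Omega\text{-}\mathsf{Magma}(\mathcal C)\to \mathbf{BSMF}(\mathcal M(\Omega),\mathcal C)$ by $A\mapsto \mathcal E_A$, where $\mathcal E_A(n):=A^{\otimes n}$ and $\mathcal E_A$ sends the generators $\omega$, $\id_m$, $\tau_{m,n}$, $\tau^{-1}_{m,n}$ to $\omega_A$, $\id_{A^{\otimes m}}$, $\tau_{A^{\otimes m},A^{\otimes n}}$ and its inverse, respectively; an $\Omega$-magma homomorphism $f$ is sent to the transformation with components $f^{\otimes n}$.

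The crux of the argument, and the step I expect to be the main obstacle, is checking that $\mathcal E_A$ is well defined on $\mathcal M(\Omega)=X/{\sim}$, i.e. that the functor $X\to\mathcal C$ determined on edges by the above assignment respects the congruence $\sim$. Here I would exploit that $\sim$ is the \emph{minimal} congruence turning $X/{\sim}$ into a braided strict monoidal category with the prescribed structure, so that it is the congruence generated by an explicit list of relations: that $\id_m$ be an identity, that $\tau^{-1}_{m,n}$ be inverse to $\tau_{m,n}$, that $\otimes$ be a bifunctor, and that the braidings be natural and satisfy the hexagon axioms. Let $\sim_{\mathcal E}$ be the congruence on $X$ defined by $f\sim_{\mathcal E} g$ iff $\mathcal E_A$ sends $f$ and $g$ to the same morphism of $\mathcal C$; this is a congruence because $\mathcal E_A$ is a functor. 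It then suffices to verify that every generating relation of $\sim$ lies in $\sim_{\mathcal E}$, i.e. becomes a valid equation in $\mathcal C$ after applying $\mathcal E_A$: this holds precisely because $\mathcal C$ is a braided monoidal category (bifunctoriality, braiding naturality and the hexagons are its axioms) and because the braiding in $\mathcal C$ is natural with respect to each structure morphism $\omega_A$. By minimality, $\sim\subseteq\sim_{\mathcal E}$, so $\mathcal E_A$ descends; that it is braided strong monoidal is immediate from $\mathcal E_A(\tau_{m,n})=\tau_{A^{\otimes m},A^{\otimes n}}$, and naturality of $\mathcal E_f$ reduces, on generators, to the braiding naturality in $\mathcal C$ and to the homomorphism square defining $\Omega$-magma morphisms.

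Finally, I would check that $\Phi$ and $\Psi$ are quasi-inverse. The composite $\Phi\Psi$ is the identity on the nose, since $\mathcal E_A(1)=A$ with $\omega\mapsto \mathcal E_A(\omega)=\omega_A$ and $(\mathcal E_f)_1=f$. For $\Psi\Phi$, the functor $\mathcal E_{F(1)}$ agrees with $F$ on the object $1$ and on every generator $\omega$ and $\tau_{m,n}$, and the coherence isomorphisms $F(1)^{\otimes n}\cong F(n)$ supplied by the monoidal structure of $F$ assemble into a monoidal natural isomorphism $\mathcal E_{F(1)}\cong F$, whose naturality need only be checked on generators, where it follows from the monoidal-functor axioms and from $F$ being braided. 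This produces the natural isomorphisms $\Phi\Psi\cong\id$ and $\Psi\Phi\cong\id$ and completes the equivalence.
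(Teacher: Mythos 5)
Your proposal is correct and follows essentially the same route as the paper: the quasi-inverse pair $A\mapsto\mathcal E_A$, $F\mapsto F(1)$ with $\alpha\mapsto\alpha_1$ on morphisms is exactly the paper's construction. The only difference is that you spell out the well-definedness of $\mathcal E_A$ on the quotient $X/{\sim}$ via the minimality of the congruence, a verification the paper leaves implicit.
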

\begin{proof}
Let $A$ be an arbitrary $\Omega$-magma in $\mathcal C$. Denote by $\mathcal E_A$ a braided strong monoidal functor $\mathcal M(\Omega) \to \mathcal C$ mapping
\begin{enumerate}
	\item  $1$ to $A$;
	\item $\omega$ to $\omega_A$ for every $\omega \in \Omega$.
\end{enumerate}

Note that the functor $\mathcal E_A$ is completely determined by the choice of the natural transformation $\mathcal E_A(m)\otimes \mathcal E_A(n) \mathrel{\widetilde\to} \mathcal E_A(m+n)$ and an isomorphism $\mathbbm{1} \mathrel{\widetilde\to} \mathcal E_A(0)$ where $\mathbbm{1}$ is the monoidal unit in $\mathcal C$. Therefore, $\mathcal E_A$ is unique up to a monoidal isomorphism
and we get a functor $\Omega\text{-}\mathsf{Magma}(\mathcal C) \to \mathbf{BSMF}(\mathcal M(\Omega), \mathcal C)$, $A \mapsto \mathcal E_A$.

Conversely, every braided strong monoidal functor $F \colon\mathcal M(\Omega) \to \mathcal C$ defines the $\Omega$-magma $F(1)$
with the operations $\omega_{F(1)}$ that are equal to $F(\omega)$ composed with the corresponding isomorphisms $F(m)\mathrel{\widetilde\to} F(1)^{\otimes m}$, $\omega \in \Omega$. Given braided strong monoidal functors $F,G \colon\mathcal M(\Omega) \to \mathcal C$, the map $\alpha \mapsto \alpha_1$ defines  a one-to-one correspondence between monoidal natural transformations $\alpha \colon F \Rightarrow G$
and $\Omega$-magma homomorphisms $f \colon F(1) \to G(1)$.
\end{proof}

\subsection{Polynomial identities in $\Omega$-magmas}\label{SubsectionOmegaIndentities}
Again fix a set $\Omega$ and maps $s,t \colon \Omega \to \mathbb Z_+$.
Let $f$ and $g$ be $\Omega$-monomials such that $s(f)=s(g)$
and $t(f)=t(g)$. Then the expression $f\equiv g$ is called a \textit{$\Omega$-polynomial identity}.
(Here we use the word ``polynomial'' since $f\equiv g$ contains two monomials $f$ and $g$.)

Let $A$ be an $\Omega$-magma in a braided monoidal category~$\mathcal C$.
We say that  $f\equiv g$ is a \textit{polynomial identity in $A$} or that $f\equiv g$ \textit{holds in $A$} if  $\mathcal E_A(f)=\mathcal E_A(g)$. Polynomial identities $f\equiv g$ in $A$ can be identified with morphisms $(f,g)$ in the category $\Id(A)$ where
the set of objects is $\mathbb Z_+$ and $\Id(A)(m,n)$  for $m,n \in \mathbb Z_+$
is defined by the pullback
$$ \xymatrix{
\Id(A)(m,n) \ar[d] \ar[r] \ar@{}[rd]|<{\pullbacka} & \mathcal M (\Omega)(m,n) \ar[d]^{\mathcal E_A} \\
\mathcal M (\Omega)(m,n) \ar[r]^{\mathcal E_A} & \mathcal C (m,n)\\
}
$$

Let $V$ be some set of $\Omega$-polynomial identities. The full subcategory $\mathop\mathrm{Var}(V)$ of $\Omega\text{-}\mathsf{Magma}(\mathcal C)$, consisting of all $\Omega$-magmas satisfying $V$, is called a \textit{variety} of $\Omega$-magmas.  

\begin{example}
Coalgebras and Hopf algebras form varieties for the corresponding sets of polynomial identities.
\end{example}

When one studies polynomial identities in an $\Omega$-magma that already belongs to some variety $\mathop\mathrm{Var}(V)$,
then it is natural identify such polynomial identities that can be derived one from the other using $V$.
For example, in the variety of associative algebras it is natural to identify $x(yz)\equiv x(zy)$ and $(xy)z\equiv (xz)y$.
Below we describe the corresponding construction for $\Omega$-magmas in an arbitrary braided monoidal category~$\mathcal C$.

If $\sim$ is a congruence on a monoidal category $X$ and 
$f_1 \otimes g_1 \sim f_2 \otimes g_2$ for all $f_1, f_2 \in X (a,b)$, $g_1, g_2 \in X (c,d)$
and objects $a,b,c,d$ such that $f_1 \sim f_2$ and $g_1 \sim g_2$,
then we say that the congruence $\sim$ is \textit{monoidal}. It is easy to see that in this case $X/{\sim}$ induces from $X$ the structure of a monoidal category such that the canonical functor $X \to X/{\sim}$ is strict monoidal. If $X$ is braided (symmetric),
then the category $X/{\sim}$ and the functor $X \to X/{\sim}$ are braided (resp., symmetric) too.

\begin{example}
If $A$ is an $\Omega$-magma, then $\Id(A)$ is a monoidal congruence on $\mathcal M (\Omega)$.
\end{example}

Given a set $V$ of $\Omega$-polynomial identities, consider the minimal monoidal congruence $\sim$ on $\mathcal M(\Omega)$
such that $f\sim g$ for every $f=g$ from $V$.
We call $\mathcal M_V(\Omega) := \mathcal M(\Omega)/{\sim}$ the category of \textit{$V$-relative
$\Omega$-monomials}. Then $\mathcal M_V(\Omega)$ is a BMAT too and an $\Omega$-magma $A$ belongs to $\mathop\mathrm{Var}(V)$
if and only if there exists a braided strong monoidal functor $F \colon \mathcal M_V(\Omega) \to \mathcal C$
such that $F(1)=A$ and $F(\omega)=\omega_A$ for all $\omega \in \Omega$.

(Co)restricting the functors from Proposition~\ref{PropositionOmegaMagmaBSMFunctorsEquivalence}, we get

\begin{proposition}\label{PropositionVarBSMFunctorsEquivalence} Let $\Omega$ be a set together with maps $s,t \colon \Omega \to \mathbb Z_+$, let $V$ be a set of $\Omega$-polynomial identities
	and let $\mathcal C$ be a braided monoidal category. Then the categories $\mathrm{Var}(V)$ and $\mathbf{BSMF}(\mathcal M_V(\Omega), \mathcal C)$ are equivalent.
\end{proposition}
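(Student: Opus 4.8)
The plan is to obtain the desired equivalence by restricting the equivalence $\Phi \colon \Omega\text{-}\mathsf{Magma}(\mathcal C) \to \mathbf{BSMF}(\mathcal M(\Omega), \mathcal C)$, $A \mapsto \mathcal E_A$, of Proposition~\ref{PropositionOmegaMagmaBSMFunctorsEquivalence} along the canonical quotient functor $Q \colon \mathcal M(\Omega) \to \mathcal M_V(\Omega)$. First I would record the basic properties of $Q$: it is braided strict monoidal (as the canonical functor onto a monoidal quotient), it is the identity on objects, and it is full, being surjective on each hom-set, although in general not faithful. Precomposition with $Q$ then yields a functor $Q^* \colon \mathbf{BSMF}(\mathcal M_V(\Omega), \mathcal C) \to \mathbf{BSMF}(\mathcal M(\Omega), \mathcal C)$, $F \mapsto FQ$, and the whole argument reduces to comparing the essential image of $Q^*$ with the image under $\Phi$ of the full subcategory $\mathrm{Var}(V)$.

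Next I would check that $Q^*$ is fully faithful. Faithfulness is immediate since $Q$ is bijective on objects and a monoidal natural transformation is determined by its components, which are indexed by objects. For fullness, given braided strong monoidal $F,G \colon \mathcal M_V(\Omega)\to\mathcal C$ and a monoidal natural transformation $\gamma \colon FQ \Rightarrow GQ$, I would set $\delta_m := \gamma_m$ (legitimate because $Q$ is the identity on objects) and verify naturality of $\delta$ against an arbitrary morphism $\bar h$ of $\mathcal M_V(\Omega)$ by choosing, via fullness of $Q$, a preimage $h$ with $Q(h)=\bar h$ and invoking naturality of $\gamma$ at $h$; monoidality of $\delta$ is inherited from that of $\gamma$. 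Thus $Q^*$ embeds $\mathbf{BSMF}(\mathcal M_V(\Omega), \mathcal C)$ as a full subcategory of $\mathbf{BSMF}(\mathcal M(\Omega), \mathcal C)$, whose essential image consists precisely of those braided strong monoidal functors $H$ that factor through $Q$, equivalently that satisfy $H(f)=H(g)$ whenever $f\sim g$. Here I would invoke the universal property of the monoidal quotient: such an $H$ induces $\bar H \colon \mathcal M_V(\Omega) \to \mathcal C$ with $\bar H Q = H$, and $\bar H$ is again braided strong monoidal since its structure isomorphisms live on objects (shared with $\mathcal M(\Omega)$) and all coherence and braiding identities can be pulled back along the full functor $Q$.

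The decisive step is to identify this essential image with $\Phi(\mathrm{Var}(V))$, i.e. to show that $\mathcal E_A$ respects $\sim$ exactly when $A \in \mathrm{Var}(V)$. The implication from respecting $\sim$ to satisfying $V$ is trivial, because $\sim$ contains every pair coming from $V$. For the converse, which I expect to be the main point, I would observe that the relation $f \approx g :\Longleftrightarrow \mathcal E_A(f)=\mathcal E_A(g)$ is itself a monoidal congruence on $\mathcal M(\Omega)$: it is an equivalence relation, it is compatible with composition because $\mathcal E_A$ is a functor, and it is compatible with $\otimes$ because $\mathcal E_A$ is strong monoidal, so that $\mathcal E_A(f\otimes g)$ is obtained from $\mathcal E_A(f)\otimes\mathcal E_A(g)$ by conjugation with the structure isomorphisms $\mathcal E_A(m)\otimes\mathcal E_A(n) \to \mathcal E_A(m+n)$. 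If $A$ satisfies $V$, then $\approx$ contains all $V$-pairs, hence contains the minimal monoidal congruence $\sim$ generated by $V$; therefore $\mathcal E_A$ respects $\sim$, as required.

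Finally I would assemble the pieces. Since $\Phi$ is an equivalence and $\mathrm{Var}(V)$ is a full subcategory, $\Phi$ restricts to an equivalence of $\mathrm{Var}(V)$ with the full subcategory $\mathcal D$ of $\mathbf{BSMF}(\mathcal M(\Omega),\mathcal C)$ on the objects $\mathcal E_A$, $A\in\mathrm{Var}(V)$; by the previous paragraph $\mathcal D$ is exactly the essential image of the fully faithful $Q^*$. Hence $Q^*$ corestricts to an equivalence $\mathbf{BSMF}(\mathcal M_V(\Omega),\mathcal C) \to \mathcal D$, and composing with the restriction of $\Phi$ gives the desired equivalence $\mathrm{Var}(V) \simeq \mathbf{BSMF}(\mathcal M_V(\Omega),\mathcal C)$. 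The only genuinely delicate checks are the strong monoidality of the factorization $\bar H$ and the closure of $\approx$ under $\otimes$; both hinge on the same fact, namely that a strong monoidal functor turns the tensor of two morphisms into a conjugate of the tensor of their images.
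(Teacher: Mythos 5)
Your argument is correct and is essentially the paper's proof written out in full: the paper disposes of this proposition by ``(co)restricting the functors'' from Proposition~\ref{PropositionOmegaMagmaBSMFunctorsEquivalence}, relying on the preceding remark that $A\in\mathrm{Var}(V)$ iff $\mathcal E_A$ factors through $\mathcal M_V(\Omega)$, and your $Q^*$-formulation together with the observation that $\Id(A)$ is a monoidal congruence containing $V$ (hence containing the minimal one $\sim$) is exactly that argument made explicit. No gap.
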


In other words, to every variety $\mathop\mathrm{Var}(V)$ of $\Omega$-magmas we assign its  
BMAT~$\mathcal M_V(\Omega)$.
 As in the case of ordinary algebraic theories, in this way we can obtain every BMAT:

\begin{proposition}\label{PropositionBMATisMVOmega}
	An arbitrary BMAT~$\mathbb{A}$ is braided monoidally isomorphic to
	$\mathcal M_V(\Omega)$ for some $V$ and $\Omega$.
\end{proposition}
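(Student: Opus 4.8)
The plan is to realize $\mathbb{A}$ as a quotient of a free BMAT by a suitable monoidal congruence, i.e. to present $\mathbb{A}$ by generators and relations. First I would take the signature $\Omega := U\mathbb{A}$, so that $\Omega(m,n) = \mathbb{A}(m,n)$ for all $m,n \in \mathbb{Z}_+$; thus every morphism $\varphi$ of $\mathbb{A}$ is turned into a symbol with $s(\varphi)=m$ and $t(\varphi)=n$ whenever $\varphi \in \mathbb{A}(m,n)$. Since $\mathbb{A}$ is strict with $1^{\otimes k}=k$, the object $1 \in \mathbb{A}$, equipped with the operations $\varphi_1 := \varphi \colon 1^{\otimes m} \to 1^{\otimes n}$, is an $\Omega$-magma in $\mathbb{A}$. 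The braided strong monoidal functor $\mathcal{E}_1 \colon \mathcal{M}(\Omega) \to \mathbb{A}$ attached to it by Proposition~\ref{PropositionOmegaMagmaBSMFunctorsEquivalence} is exactly the component at $\mathbb{A}$ of the counit of the adjunction $\mathcal{M}(-) \dashv U$ evaluated at $\mathrm{id}_{U\mathbb{A}}$; in particular, being a morphism of $\mathbf{BMAT}$, it is braided strict monoidal.

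Next I would record two properties of $\mathcal{E}_1$. It is bijective on objects, since it sends $k=1^{\otimes k}$ to $1^{\otimes k}=k$. It is also full: every $\varphi \in \mathbb{A}(m,n)$ is the image under $\mathcal{E}_1$ of the corresponding generator $\varphi \in \Omega(m,n) \subseteq \mathcal{M}(\Omega)(m,n)$, because $\mathcal{E}_1(\varphi) = \varphi_1 = \varphi$. Now set $V := \Id(1)$, the full set of $\Omega$-polynomial identities holding in the $\Omega$-magma $1$. By the Example stating that $\Id(A)$ is a monoidal congruence, $\Id(1)$ is a monoidal congruence on $\mathcal{M}(\Omega)$, and by the pullback definition it is precisely the kernel relation of $\mathcal{E}_1$, namely $f \sim g \iff \mathcal{E}_1(f)=\mathcal{E}_1(g)$. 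Consequently $\Id(1)$ is itself a monoidal congruence containing every pair from $V$, so it coincides with the minimal monoidal congruence generated by $V$; that is, $\mathcal{M}_V(\Omega) = \mathcal{M}(\Omega)/\Id(1)$.

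Finally I would invoke the first isomorphism theorem for braided strict monoidal categories. The functor $\mathcal{E}_1$ factors through the quotient functor $Q \colon \mathcal{M}(\Omega) \to \mathcal{M}_V(\Omega)$ as $\mathcal{E}_1 = \overline{\mathcal{E}}_1 \circ Q$ for a unique braided strict monoidal functor $\overline{\mathcal{E}}_1 \colon \mathcal{M}_V(\Omega) \to \mathbb{A}$. This $\overline{\mathcal{E}}_1$ is bijective on objects; it is injective on hom-sets because $Q$ identifies exactly the pairs in $\Id(1)$, and surjective on hom-sets by fullness of $\mathcal{E}_1$. Hence $\overline{\mathcal{E}}_1$ is an isomorphism of categories, and since both $\mathcal{E}_1$ and $Q$ are braided strict monoidal, it is an isomorphism of BMATs.

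The main obstacle is the bookkeeping in the last two paragraphs: verifying that $\ker \mathcal{E}_1$ is genuinely a \emph{monoidal} congruence (which rests on the strict monoidality of $\mathcal{E}_1$, ensuring compatibility with $\otimes$) and that the induced functor $\overline{\mathcal{E}}_1$ preserves the braidings $\tau_{m,n}$ strictly on the nose; once these points are settled, the remainder is the routine quotient-category argument.
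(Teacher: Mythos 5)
Your proposal is correct and follows essentially the same route as the paper: take $\Omega$ to be the set of all morphisms of $\mathbb{A}$, observe that $1$ is an $\Omega$-magma in $\mathbb{A}$ so that the induced strict braided monoidal functor $\mathcal M(\Omega)\to\mathbb{A}$ is surjective on hom-sets, let $V$ be the kernel congruence, and conclude $\mathbb{A}\cong\mathcal M_V(\Omega)$. You merely spell out in more detail the verification (bijectivity on objects, the factorization through the quotient) that the paper leaves implicit.
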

\begin{proof}
Denote by $\Omega$ the set of all morphisms $\omega$ in~$\mathbb{A}$ and define the maps $s,t \colon \Omega \to \mathbb R_+$
by $s(\omega):=m$, $s(\omega):=n$ for $\omega \in \mathbb{A}(m,n)$.
Then $1$ is an $\Omega$-magma in~$\mathbb{A}$ and there exists a unique strict braided monoidal functor $F \colon \mathcal M(\Omega) \to \mathbb{A}$ such that $F(1)=1$ and $F(\omega)=\omega$ for all $\omega \in \Omega$. Note that $F$
is surjective on hom-sets. Denote by $V$ the set of polynomial identities resulting from the kernel congruence $\sim$ of $F$. Then $ \mathbb{A} \cong \mathcal M(\Omega)/{\sim} = \mathcal M_V(\Omega)$.
\end{proof}

As we have already mentioned,
if an $\Omega$-magma $A$ belongs to $\mathop\mathrm{Var}(V)$ for some set $V$ of polynomial identities,
it is natural to use in the definition of additional polynomial identities that $A$ may satisfy,
the category $\mathcal M_V(\Omega)$ instead of $\mathcal M(\Omega)$.
For $A$ belonging to $\mathop\mathrm{Var}(V)$
the functor $\mathcal E_A$ factors through $\mathcal M_V(\Omega)$.
Denote the corresponding functor $\mathcal M_V(\Omega) \to \mathcal C$
again by $\mathcal E_A$. For $V$-relative
$\Omega$-monomials $f,g$
we say that  $f\equiv g$ is a \textit{polynomial identity in $A$} or that $f\equiv g$ \textit{holds in $A$} if  $\mathcal E_A(f)=\mathcal E_A(g)$.

\begin{definition} Let $V$ be a set of $\Omega$-polynomial identities.
	We say that a polynomial identity $f\equiv g$ \textit{follows} from $V$
	for some $\Omega$-monomials $f$ and $g$ if the images of $f$ and $g$ in $\mathcal M_V(\Omega)$ coincide.
	Let $A$ be an $\Omega$-magma. We say that $V$ \textit{generates}
	polynomial identities in $A$ or that $V$ is a \textit{basis} for polynomial identities in $A$ if the polynomial identities from $V$ hold in $A$ and the functor $\mathcal E_A \colon \mathcal M_V(\Omega) \to \mathcal C$ is faithful. In other words, $V$ generates polynomial identities in $A$ if any polynomial identity $f \equiv g$ in $A$ follows from $V$.
\end{definition}	

\begin{remark}\label{RemarkTau11}
Recall that if $\tau_{C,A}\tau_{A,C}=\id_{A\otimes C}$ and $\tau_{C,B}\tau_{B,C}=\id_{B\otimes C}$
for some objects $A,B,C$ in a braided monoidal category $\mathcal C$ with a braiding $\tau$, then
$\tau_{C,A\otimes B}\tau_{A\otimes B, C}=\id_{(A\otimes B) \otimes C}$ too.
Now the induction argument implies that if the polynomial identity $\tau_{1,1}^2 \equiv \id_2$ belongs to $V$,
then the category $\mathcal M_V(\Omega)$ is symmetric.
\end{remark}

\begin{remark} If $A$ is an ordinary algebraic structure of signature $\Omega$, i.e. $t(\omega)=1$ for all $\omega \in \Omega$ and $\mathcal C = \mathbf{Sets}$,
	then $\Omega$-polynomial identities defined above will  all be ``multilinear'', i.e. every variable will appear in each side exactly once. As we have mentioned at the end of Section~\ref{SubsectionAlgebraicStructures}, 
	in order to get all polynomial identities in the sense of Section~\ref{SubsectionAlgebraicStructures},
	one has to add to $\Omega$ two new symbols, $\varepsilon$ and $\Delta$,
	where $s(\Delta)=s(\varepsilon)=1$, $t(\Delta)=2$, $t(\varepsilon)=0$,
	define $\varepsilon_A(a)=*$, $\Delta_A a := (a,a)$ for all $a\in A$
	and consider $\Omega \sqcup \lbrace\Delta, \varepsilon \rbrace$-polynomial identities.
	Let $V$ be the set consisting of the following polynomial identities:
		\begin{enumerate}
		\item $\tau_{1,1}^2\equiv \id_2$, forcing the braiding $\tau_{k,\ell} \colon (k+\ell) \to (k+\ell)$, $k,\ell \in \mathbb Z_+$, to be symmetric (see Remark~\ref{RemarkTau11});
		\item $(\Delta \otimes \id_1)\Delta \equiv (\id_1 \otimes \Delta )\Delta$, $(\varepsilon \otimes \id_1)\Delta = (\id_1 \otimes \varepsilon)\Delta \equiv \id_1$, forcing $(1, \Delta, \varepsilon)$ to be a comonoid;
		\item $\varepsilon \omega \equiv \varepsilon^{{}\otimes s(\omega)}$
	and $\Delta \omega \equiv (\omega\otimes\omega) \tau\Delta^{{}\otimes s(\omega)}$
	for all $\omega \in \Omega$
	where $\tau$ is a composition of swaps $\tau_{k,\ell}$ that corresponds
	to the permutation $\left(\begin{smallmatrix} 1 & 2 & \ldots & s(\omega) & s(\omega)+1 &  s(\omega)+3 & \ldots  &2s(\omega) \\ 
	1 & 3 & \ldots & 2s(\omega)-1 & 2 & 4  & \ldots & 2s(\omega)
\end{smallmatrix}\right)$,
		making it possible to swap $\varepsilon$ and $\Delta$ with $\omega$ in the same way as in 
		ordinary algebraic structures of signature $\Omega$.
	\end{enumerate}
	Then every $f \in \mathcal M_V(\Omega \sqcup \lbrace\Delta, \varepsilon \rbrace)(n,1)$
	equals $\alpha\beta\gamma$ where $\alpha$ is a composition of $\omega \in \Omega$, $\beta$ is a composition of swaps $\tau_{k,\ell}$ and $\gamma$ is a composition of $\Delta$ and $\varepsilon$ where $\omega$, $\tau_{k,\ell}$, $\Delta$ and $\varepsilon$ can appear monoidally multiplied by $\id_r$ for some $r\in\mathbb N$.
	Substituting the generators $x_1,\ldots, x_n$ of the free algebraic structure $\mathcal F_\Omega (x_1, \ldots, x_n)$
	for the corresponding arguments of $f$, we see that the set $\mathcal M_V(\Omega \sqcup \lbrace\Delta, \varepsilon \rbrace)(n,1)$ can be identified with $\mathcal F_\Omega (x_1, \ldots, x_n)$ for every $n\in\mathbb N$.
	Under this identification, polynomial identities of $A$ correspond to polynomial identities of~$A$.
\end{remark}

\section{Polynomial identities in $\Omega$-magmas in linear categories}

\subsection{Polynomial identities in algebras over a field}
Again, before making a generalization, we recall the classical definition of polynomial identities and their codimensions in associative algebras~\cite{DrenKurs,GiaZai}.

Let $\mathbbm{k}$ be a field. Denote by $\mathbbm k \langle X \rangle$ the free non-unital associative algebra on
the countable set $X=\lbrace x_1, x_2, \ldots \rbrace$, i.e. the algebra of polynomials without a constant term with coefficients from $\mathbbm k$ in the non-commuting variables from $X$. Let $A$ be an associative $\mathbbm{k}$-algebra.
We say that $f\in \mathbbm k \langle X \rangle$ is a \textit{polynomial identity} in $A$ and write $f\equiv 0$
if $f(a_1, \ldots, a_n)=0$ for all $a_i \in A$ where $n\in\mathbb N$ is the number of variables that appear in $f$.
In other words, $f\equiv 0$ if and only if $\varphi(f)=0$ for every algebra homomorphism $\varphi \colon \mathbbm k \langle X \rangle \to A$. The set $\Id(A)$ forms an ideal in $\mathbbm k \langle X \rangle$ that is invariant under all endomorphisms of $\mathbbm k \langle X \rangle$.

Consider the vector space $$P_n = \left\lbrace  \left.\sum\limits_{\sigma \in S_n} \alpha_\sigma\, 
x_{\sigma(1)} x_{\sigma(2)} \ldots x_{\sigma(n)} \right| \alpha_\sigma \in \mathbbm k \right\rbrace \subset \mathbbm k \langle X \rangle$$ of multilinear polynomials in $x_1, \ldots, x_n$.
(Here $S_n$ is the $n$th symmetric group.)

The number $c_n(A) := \dim \frac{P_n}{P_n \cap \Id(A)}$, $n\in\mathbb N$, is called the $n$th \textit{codimension}
of polynomial identities of $A$. Let  $\Hom_{\mathbbm k}(A^{{}\otimes n},A)$ be the space of all linear maps $A^{{}\otimes n} \to A$. Then $c_n(A)$ is just the dimension of the subspace of $\Hom_{\mathbbm k}(A^{{}\otimes n},A)$ consisting of all the maps
that can be realized using the multiplication in $A$. A detailed study of the asymptotic behaviour of the codimension sequence can be found in~\cite{GiaZai}.

Analogous definitions can be made for not necessarily associative algebras too. Instead of $\mathbbm k \langle X \rangle$ one must use the absolutely free non-associative algebra $\mathbbm k \lbrace X \rbrace$ and the corresponding non-associative multilinear polynomials. The alternative definition of $c_n(A)$ as the dimension of the corresponding subspace in $\Hom_{\mathbbm k}(A^{{}\otimes n},A)$ implies that for an associative algebra $A$ the numbers $c_n(A)$ do not depend on whether we use $\mathbbm k \langle X \rangle$ or $\mathbbm k \lbrace X \rbrace$ in their definition.

\subsection{$\Omega$-polynomials with coefficients in a ring}

Let $R$ be a unital commutative associative ring. (Below all commutative rings will be associative too and the word ``associative'' will usually be omitted.) Recall that a category $\mathcal C$ is \textit{$R$-linear} if it is enriched over
the category ${}_R \mathsf{Mod}$ of $R$-modules, i.e. if all hom-sets of $\mathcal C$ are $R$-modules
and the composition of morphisms is $R$-bilinear. For $R$-linear monoidal categories we require that the monoidal product is $R$-bilinear too. A functor $F \colon \mathcal C \to \mathcal D$ between $R$-linear categories $\mathcal C$ and $\mathcal D$
is called \textit{$R$-linear} if $F$ is an $R$-linear map on hom-sets.

For $R$-linear braided monoidal categories 
$\mathcal C$ and $\mathcal D$ denote by $R\text{-}\mathbf{LBSMF}(\mathcal C,\mathcal D)$
the category of $R$-linear braided strong monoidal functors $\mathcal C \to \mathcal D$ where morphisms between $F,G \colon \mathcal C \to \mathcal D$ are all monoidal transformations $\alpha \colon F \Rightarrow G$.

For $\Omega$-magmas in $R$-linear braided monoidal categories one can introduce polynomial identities with coefficients in $R$.

Consider the braided monoidal category
$\mathcal P(R, \Omega)$ where the objects, the monoidal product on them and the braiding are the same as in $\mathcal M(\Omega)$
and for every $m,n\in\mathbb{Z}_+$ the hom-set
$\mathcal P(R, \Omega)(m,n)$ is just the free $R$-module with the basis $\mathcal M(\Omega)(m,n)$.
The compositions and the monoidal product on morphisms are extended from $\mathcal M(\Omega)$ to
$\mathcal P(R, \Omega)$ by the $R$-linearity.

Then  $\mathcal P(R, \Omega)$ is an $R$-linear BMAT, which we call the \textit{category of $\Omega$-polynomals with coefficients in $R$}. Morphisms in $\mathcal P(R, \Omega)$ are called \textit{$\Omega$-polynomials with coefficients in $R$}. 

 \begin{remark}
	Denote by $R\text{-}\mathbf{LBMAT}$ the category of $R$-linear BMATs where the morphisms are $R$-linear braided strict monoidal functors and by $U \colon R\text{-}\mathbf{LBMAT} \to ((\mathbb Z_+ \times \mathbb Z_+) \downarrow \mathbf{Sets})$ the functor
	that assigns every $R$-linear BMAT $\mathbb A$ the function $(m,n) \mapsto \mathbb A(m,n)$, $m,n\in \mathbb Z_+$.
	Then we again get a bijection
	$$R\text{-}\mathbf{LBMAT}(\mathcal P(R, \Omega), \mathbb A) \mathbin{\widetilde\to} ((\mathbb Z_+ \times \mathbb Z_+) \downarrow \mathbf{Sets})(\Omega, U \mathbb A)$$
	natural in $\Omega$ and $\mathbb A$. Hence the functor $\mathcal P(R, -)$ is the free functor left adjoint to the forgetful functor $U$. The functor $\mathcal P(R, -)$ first appeared in~\cite{MarklPROPDef} where BMATs $\mathcal P(R, \Omega)$ were called free theories.
\end{remark}

For every $\Omega$-magma $A$ 
in an $R$-linear braided monoidal category~$\mathcal C$  extend
the functor $\mathcal E_A \colon \mathcal M(\Omega) \to \mathcal C$ defined above to the functor $\mathcal P(R, \Omega) \to \mathcal C$ by linearity. Namely, if $k,m,n \in \mathbb Z_+$ and $\alpha_i \in R$, $f_i \in \mathcal M(\Omega)(m,n)$ for $1\leqslant i \leqslant k$, let $$\mathcal E_A\left(\sum_{i=1}^k \alpha_i f_i\right):=\sum_{i=1}^s \alpha_i \mathcal E_A(f_i).$$

An $\Omega$-polynomial $f$ is a \textit{polynomial identity in $A$ with coefficients in $R$}  if $\mathcal E_A(f) = 0$. In this case we say that $A$ satisfies $f\equiv 0$. Polynomial identities in $A$ with coefficients in $R$ are morphisms in the category $\Id(A,R)$ where the set of objects is $\mathbb Z_+$ and $\Id(A,R)(m,n) := \Ker \left(\mathcal E_A \bigr|_{\mathcal P(R, \Omega)(m,n)}\right)$
for every $m,n\in\mathbb Z_+$.

Let $V$ be a set of $\Omega$-polynomials with coefficients in $R$. The full subcategory $\mathop\mathrm{Var}(V)$ of $\Omega\text{-}\mathsf{Magma}(\mathcal C)$, consisting of all $\Omega$-magmas satisfying $f\equiv 0$ for all $f\in V$, is again called a \textit{variety} of $\Omega$-magmas. 

Recall that a \textit{monoidal ideal} $I$ in an $R$-linear monoidal category $X$ is
a system of $R$-submodules $I(a,b) \subseteq X(a,b)$, where $a,b$ are objects in $X$, such that
\begin{enumerate}
	\item $gf \in I(a,c)$ for all $f\in X(a,b)$, $g\in X(b,c)$ where either $f\in I(a,b)$
or $g\in I(b,c)$;
\item $f \otimes g \in I(a \otimes c, b \otimes d)$ for all $f\in X(a,b)$, $g\in X(c,d)$ where either $f\in I(a,b)$
or $g\in I(c,d)$
\end{enumerate}
for all objects $a,b,c,d$ in $X$. Then the \textit{factor category} $X/I$ is the category with the same objects as in $X$
and the hom-sets $X(a,b)/I(a,b)$. It is easy to see that $X/I$ is again an $R$-linear monoidal category and the canonical functor $X \to X/I$ is an $R$-linear strict monoidal functor. If $X$ is braided (symmetric), then the category $X/I$ and the functor $X \to X/I$ are braided (resp., symmetric) too.

\begin{example}
For every $\Omega$-magma $A$ the hom-sets $\Id(A,R)(m,n)$ of $\Id(A,R)$ form a monoidal ideal in $\mathcal P(R, \Omega)$.
\end{example}

Again, given a set $V$ of $\Omega$-polynomials with coefficients in $R$, we consider the minimal monoidal ideal $I$
 in $\mathcal P(R, \Omega)$ such that $V \subseteq \bigcup\limits_{m,n\in\mathbb Z_+} I(m,n)$.
We call the category $\mathcal P_V(R,\Omega) := \mathcal P(R, \Omega)/I$ the category of \textit{$V$-relative
	$\Omega$-polynomials with coefficients in $R$}. Then an $\Omega$-magma $A$ belongs to $\mathop\mathrm{Var}(V)$
if and only if there exists a $R$-linear braided strong monoidal functor $F \colon \mathcal P_V(R,\Omega) \to \mathcal C$
such that $F(1)=A$ and $F(\omega)=\omega_A$ for all $\omega \in \Omega$. The category
$\mathcal P_V(R,\Omega)$ is an $R$-linear BMAT and the analogs of Propositions~\ref{PropositionOmegaMagmaBSMFunctorsEquivalence}, \ref{PropositionVarBSMFunctorsEquivalence} and~\ref{PropositionBMATisMVOmega} hold too:
\begin{proposition}\label{PropositionOmegaMagmaVarRLBSMFunctorsEquivalence} Let $\Omega$ be a set together with maps $s,t \colon \Omega \to \mathbb Z_+$
	and let $\mathcal C$ be a $R$-linear braided monoidal category for a unital commutative ring $R$. Then the categories $\Omega\text{-}\mathsf{Magma}(\mathcal C)$ and $R\text{-}\mathbf{LBSMF}(\mathcal P(R,\Omega), \mathcal C)$ are equivalent. Moreover, if $V$ is a set of $\Omega$-polynomials with coefficients in $R$,
	then the categories  $\mathrm{Var}(V)$ and $R\text{-}\mathbf{LBSMF}(\mathcal P_V(R,\Omega), \mathcal C)$ are equivalent.
\end{proposition}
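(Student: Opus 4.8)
The plan is to reduce everything to Proposition~\ref{PropositionOmegaMagmaBSMFunctorsEquivalence} (and, for the second assertion, to Proposition~\ref{PropositionVarBSMFunctorsEquivalence}) by exploiting that each hom-set of $\mathcal P(R,\Omega)$ is the free $R$-module on the corresponding hom-set of $\mathcal M(\Omega)$. First I would record the canonical braided strict monoidal functor $\iota \colon \mathcal M(\Omega) \to \mathcal P(R,\Omega)$ sending each $\Omega$-monomial to the associated basis element; it is the identity on objects, on the monoidal product and on the braiding. Precomposition with $\iota$ yields a functor
$$R\text{-}\mathbf{LBSMF}(\mathcal P(R,\Omega),\mathcal C) \to \mathbf{BSMF}(\mathcal M(\Omega),\mathcal C), \qquad F \mapsto F\iota,$$
and the key claim is that this functor is an equivalence of categories.

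To establish the claim I would argue as follows. It is essentially surjective: given a braided strong monoidal functor $G \colon \mathcal M(\Omega) \to \mathcal C$, extend it on each hom-module $\mathcal P(R,\Omega)(m,n)$ by $R$-linearity from the basis $\mathcal M(\Omega)(m,n)$. Since composition and the monoidal product in both $\mathcal P(R,\Omega)$ and $\mathcal C$ are $R$-bilinear, this extension $F$ is again a functor that is braided strong monoidal (its structure isomorphisms and their coherence are inherited from $G$, as they live on objects and on basis morphisms), is $R$-linear by construction, and satisfies $F\iota = G$. It is fully faithful: a monoidal natural transformation $\alpha \colon F \Rightarrow F'$ between $R$-linear functors has components indexed by the objects $n\in\mathbb Z_+$, which are the same for $\mathcal P(R,\Omega)$ and $\mathcal M(\Omega)$, and the naturality squares of $\alpha$ over all of $\mathcal P(R,\Omega)$ hold if and only if they hold over the basis morphisms $\mathcal M(\Omega)$, again by bilinearity of composition; thus $\alpha \mapsto \alpha\iota$ is a bijection. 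Composing this equivalence with the equivalence of Proposition~\ref{PropositionOmegaMagmaBSMFunctorsEquivalence} gives $\Omega\text{-}\mathsf{Magma}(\mathcal C) \simeq R\text{-}\mathbf{LBSMF}(\mathcal P(R,\Omega),\mathcal C)$, realized explicitly by $A \mapsto \mathcal E_A$ with $\mathcal E_A$ the linear extension already defined above.

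For the ``moreover'' part I would use the universal property of the quotient $\mathcal P_V(R,\Omega) = \mathcal P(R,\Omega)/I$ by the minimal monoidal ideal $I$ containing $V$, together with the projection $\pi \colon \mathcal P(R,\Omega) \to \mathcal P_V(R,\Omega)$, which is $R$-linear, braided strict monoidal, the identity on objects and surjective on hom-modules. Precomposition with $\pi$ identifies $R\text{-}\mathbf{LBSMF}(\mathcal P_V(R,\Omega),\mathcal C)$ with the full subcategory of $R\text{-}\mathbf{LBSMF}(\mathcal P(R,\Omega),\mathcal C)$ of functors that vanish on $I$, and it is fully faithful and essentially surjective onto that subcategory because $\pi$ is bijective on objects and surjective on morphisms, so natural transformations descend and lift uniquely. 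The crucial point is that, for an $R$-linear braided strong monoidal $F$, the collection $\Ker\bigl(F|_{\mathcal P(R,\Omega)(m,n)}\bigr)$ is itself a monoidal ideal: closure under composition is functoriality, while closure under the monoidal product follows from naturality of the structure isomorphisms $F(a)\otimes F(b) \xrightarrow{\sim} F(a\otimes b)$ together with the identity $0\otimes h = h\otimes 0 = 0$, valid since $\otimes$ is $R$-bilinear on morphisms in $\mathcal C$. Hence $F$ vanishes on $I$ exactly when it vanishes on $V$ (as $I$ is the minimal monoidal ideal containing $V$), and the latter condition says precisely that the corresponding $\Omega$-magma lies in $\mathrm{Var}(V)$. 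Chaining the two equivalences with the restriction of the first equivalence to $\mathrm{Var}(V)$ yields $\mathrm{Var}(V) \simeq R\text{-}\mathbf{LBSMF}(\mathcal P_V(R,\Omega),\mathcal C)$.

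The main obstacle is the verification that the kernel of an $R$-linear braided strong monoidal functor is a monoidal ideal — specifically the monoidal-product clause, where one must pass through the strong monoidal structure isomorphisms and use $R$-bilinearity of $\otimes$ to conclude $F(f\otimes g)=0$ once $F(f)=0$; everything else is a formal consequence of the freeness of the hom-modules of $\mathcal P(R,\Omega)$ over those of $\mathcal M(\Omega)$, which makes the two reductions above purely mechanical.
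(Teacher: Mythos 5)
Your proposal is correct and follows essentially the same route as the paper, which simply invokes the correspondence $A \mapsto \mathcal E_A$ and $F \mapsto F(1)$ from Proposition~\ref{PropositionOmegaMagmaBSMFunctorsEquivalence} after linear extension; your two-stage factorization through $\iota \colon \mathcal M(\Omega) \to \mathcal P(R,\Omega)$ and $\pi \colon \mathcal P(R,\Omega) \to \mathcal P_V(R,\Omega)$ is just a more explicit organization of the same idea. The verifications you supply --- freeness of the hom-modules making the linear extension well defined, and the kernel of an $R$-linear braided strong monoidal functor being a monoidal ideal (which the paper records only as the example $\Id(A,R)$) --- are exactly the details the paper leaves implicit.
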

\begin{proof}
	Again, consider the correspondence $A \mapsto \mathcal E_A$ and $F \mapsto F(1)$ where $A$ is an $\Omega$-magma and
	 $F \colon \mathcal P(R,\Omega) \to \mathcal C$ is an $R$-linear braided strong monoidal functor.
\end{proof}	
\begin{proposition}\label{PropositionRLBMATisPRVOmega} Let $R$ be a unital commutative ring.
	An arbitrary $R$-linear BMAT~$\mathbb{A}$ is linearly braided monoidally isomorphic to
	$\mathcal P_V(R, \Omega)$ for some $V$ and $\Omega$.
\end{proposition}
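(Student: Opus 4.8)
The plan is to imitate the proof of Proposition~\ref{PropositionBMATisMVOmega}, replacing congruences by monoidal ideals, which becomes available thanks to the $R$-linearity of $\mathbb{A}$. First I would take $\Omega$ to be the set of all morphisms of $\mathbb{A}$, with $s(\omega):=m$ and $t(\omega):=n$ for $\omega \in \mathbb{A}(m,n)$. Since $\mathbb{A}$ is a BMAT, we have $1^{\otimes m}=m$ for every $m\in\mathbb{Z}_+$ (and $1^{\otimes 0}=0$), so each $\omega \in \mathbb{A}(m,n)$ may be read as an operation $\omega_1 \colon 1^{\otimes m} \to 1^{\otimes n}$. Thus the object $1$ carries the structure of an $\Omega$-magma in~$\mathbb{A}$.

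Next I would produce the comparison functor. By the free--forgetful adjunction for $\mathcal P(R,-)$ recorded above, namely $R\text{-}\mathbf{LBMAT}(\mathcal P(R,\Omega),\mathbb A) \cong ((\mathbb Z_+\times\mathbb Z_+)\downarrow\mathbf{Sets})(\Omega, U\mathbb A)$, the identity function $\Omega(m,n)=\mathbb{A}(m,n)=(U\mathbb A)(m,n)$ corresponds to a unique $R$-linear braided \emph{strict} monoidal functor $F\colon\mathcal P(R,\Omega)\to\mathbb A$ with $F(1)=1$ and $F(\omega)=\omega$ for every $\omega\in\Omega$. Since every $\omega\in\mathbb A(m,n)$ equals $F(\omega)$ for the corresponding generator, $F$ is surjective on each hom-set.

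I would then set $I(m,n):=\Ker\bigl(F|_{\mathcal P(R,\Omega)(m,n)}\bigr)$ and check that the system $I=\lbrace I(m,n)\rbrace$ is a monoidal ideal in $\mathcal P(R,\Omega)$: closure under composition follows from $F(gf)=F(g)F(f)$ and closure under the monoidal product from $F(f\otimes g)=F(f)\otimes F(g)$ (here strictness of $F$ is used), so a vanishing factor forces the whole expression to vanish. Because $F$ is $R$-linear and surjective on hom-sets, it induces an $R$-linear braided strict monoidal functor $\bar F \colon \mathcal P(R,\Omega)/I \to \mathbb{A}$ that is bijective on every hom-set --- injective by the very definition of $I$, surjective by that of $F$. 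As $\tau_{m,n}$ in $\mathcal P(R,\Omega)$ is inherited from $\mathcal M(\Omega)$ and $F$ is braided, $F(\tau_{m,n})$ is the braiding of $\mathbb{A}$, so $\bar F$ respects the braiding and is an isomorphism of $R$-linear BMATs.

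Finally, taking $V:=\bigcup_{m,n\in\mathbb Z_+} I(m,n)$, the minimal monoidal ideal of $\mathcal P(R,\Omega)$ containing $V$ is $I$ itself: it is contained in $I$ since $I$ is already a monoidal ideal containing $V$, and it contains $I$ since it contains $V=I$ as a set. Therefore $\mathcal P_V(R,\Omega)=\mathcal P(R,\Omega)/I \cong \mathbb{A}$, as required. I expect the only genuinely delicate point to be the monoidal-ideal verification for $\Ker F$ together with confirming that $\bar F$ is compatible with all the structure (braiding and strict monoidal product); everything else is formal once the linear universal property of $\mathcal P(R,\Omega)$ is in hand.
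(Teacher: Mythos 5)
Your proposal is correct and follows essentially the same route as the paper: build the free functor $F\colon\mathcal P(R,\Omega)\to\mathbb A$ from a choice of generators, observe surjectivity on hom-sets, and quotient by the kernel monoidal ideal. The only (inessential) difference is that you take $\Omega$ to be \emph{all} morphisms of $\mathbb A$, whereas the paper takes a union of generating sets for the hom-$R$-modules; your version also spells out the routine verifications (that $\Ker F$ is a monoidal ideal and that the minimal monoidal ideal containing $V$ is $\Ker F$ itself) which the paper leaves implicit.
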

\begin{proof} Denote by $\Omega$ the union of generating sets for all hom-$R$-modules of $\mathbb{A}$ and define the maps $s,t \colon \Omega \to \mathbb R_+$
	by $s(\omega):=m$, $s(\omega):=n$ for $\omega \in \mathbb{A}(m,n)$.
	Then $1$ is an $\Omega$-magma in~$\mathbb{A}$ and there exists a unique $R$-linear strict braided monoidal functor $F \colon \mathcal P_V(R,\Omega) \to \mathbb{A}$ such that $F(1)=1$ and $F(\omega)=\omega$ for all $\omega \in \Omega$. Again, $F$ is surjective on hom-sets. Denote by $V$ the set of polynomial identities resulting from the kernel ideal $I$ of $F$. Then $ \mathbb{A} \cong \mathcal P(R,\Omega)/I = \mathcal P_V(R,\Omega)$.
\end{proof}	

Again, if an $\Omega$-magma $A$ belongs to $\mathop\mathrm{Var}(V)$ for some set $V$ of $\Omega$-polynomials with coefficients in $R$, it is natural to use in the definition of additional polynomial identities that~$A$ may satisfy, $V$-relative
$\Omega$-polynomials with coefficients in $R$. The functor $\mathcal P_V(R,\Omega) \to \mathcal C$
induced by $\mathcal E_A$ is denoted again by $\mathcal E_A$.

\begin{definition} Let $V$ be a set of $\Omega$-polynomials with coefficients in $R$.
	We say that a polynomial identity $f\equiv 0$ \textit{follows} from $V$
	for some $\Omega$-polynomial $f$ if the image of $f$ in $\mathcal P_V(\Omega, R)$ is zero.
	Let $A$ be an $\Omega$-magma.  We say that $V$ \textit{generates}
	polynomial identities in $A$  with coefficients in $R$ or that $V$ is a \textit{basis} for polynomial identities in $A$  with coefficients in $R$ if $A$
	belongs to $\mathop\mathrm{Var}(V)$
	 and the functor $\mathcal E_A \colon \mathcal P_V(\Omega, R) \to \mathcal C$ is faithful. In other words, $V$ generates
	polynomial identities in $A$ if any polynomial identity $f \equiv 0$ in $A$ follows from~$V$.
\end{definition}	

\begin{example}\label{ExampleAssocAlgebra}
	Let $\Omega = \lbrace \mu \rbrace$, $s(\mu)=2$, $t(\mu)=1$, $V_\mathrm{assoc} := \lbrace (\mu \otimes \id_1)\mu -  ( \id_1 \otimes \mu )\mu\rbrace \cup V_\mathrm{symm}$ where $V_\mathrm{symm} :=\lbrace \tau_{1,1}^2-\id_2\rbrace$ and let $\mathbbm k$ be a field.
	In other words, $V_\mathrm{assoc}$ consists of the associator $(xy)z-x(yz)$ and the identities that force the braiding $\tau_{k,\ell} \colon (k+\ell) \to (k+\ell)$ to be symmetric (see Remark~\ref{RemarkTau11}). Then $\mathcal P_{V_\mathrm{assoc}}(\mathbbm k, \Omega)(n,1)$, where $n\in\mathbb N$, coincides with the vector space $P_n$ of associative multilinear polynomials in the variables $x_1, \ldots, x_n$. Under this identification,
	polynomial identities in an arbitrary associative algebra $A$ correspond to polynomial identities in $A$.
\end{example}

\subsection{Codimensions of $\Omega$-polynomial identities}

Consider an $\Omega$-magma $A$ in a $\mathbbm{k}$-linear braided monoidal category $\mathcal C$
where $\mathbbm{k}$ is a field.  Let $m,n \in \mathbb{Z}_+$.  Then $$c_{m,n}(A) := \dim \mathcal E_A\bigl(\mathcal P(\mathbbm{k}, \Omega)(m, n) \bigr)$$ is called the \textit{$(m,n)$-codimension} of polynomial identities of~$A$.

\begin{proposition}
Let $A$ be an $\Omega$-algebra over a field $\mathbbm{k}$, i.e. an $\Omega$-magma in $\mathbf{Vect}_\mathbbm{k}$. 
If $\dim A < +\infty$, then $c_{m,n}(A) \leqslant (\dim A)^{m+n}$ for every $m,n \in \mathbb{Z}_+$.
\end{proposition}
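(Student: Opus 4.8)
The plan is to observe that, by its very definition, $c_{m,n}(A)$ is the dimension of a \emph{subspace} of a hom-space whose dimension can be computed directly, so that the inequality reduces to the monotonicity of dimension under inclusion of subspaces.

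First I would recall how $\mathcal E_A$ acts on objects. Since $\mathcal E_A \colon \mathcal P(\mathbbm{k}, \Omega) \to \mathcal C = \mathbf{Vect}_\mathbbm{k}$ is an $R$-linear braided strong monoidal functor with $\mathcal E_A(1) = A$ (its restriction to $\mathcal M(\Omega)$ being the functor of Proposition~\ref{PropositionOmegaMagmaBSMFunctorsEquivalence}), the monoidal structure forces $\mathcal E_A(m)$ to be canonically isomorphic to $A^{\otimes m}$ for every $m \in \mathbb Z_+$, where by convention $A^{\otimes 0} = \mathbbm{k}$. Consequently, for fixed $m,n \in \mathbb Z_+$ the restriction of $\mathcal E_A$ is a $\mathbbm{k}$-linear map $\mathcal P(\mathbbm{k}, \Omega)(m,n) \to \mathcal C(m,n) = \Hom_\mathbbm{k}(A^{\otimes m}, A^{\otimes n})$, and its image $\mathcal E_A\bigl(\mathcal P(\mathbbm{k}, \Omega)(m,n)\bigr)$ is a $\mathbbm{k}$-subspace of the hom-space $\Hom_\mathbbm{k}(A^{\otimes m}, A^{\otimes n})$.

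Next I would compute the dimension of the ambient space. Because $\dim A < +\infty$, we have $\dim_\mathbbm{k} A^{\otimes k} = (\dim A)^k$ for every $k \in \mathbb Z_+$; hence $\dim_\mathbbm{k} \Hom_\mathbbm{k}(A^{\otimes m}, A^{\otimes n}) = (\dim A^{\otimes m})(\dim A^{\otimes n}) = (\dim A)^m (\dim A)^n = (\dim A)^{m+n}$, which is finite. Since the dimension of a subspace never exceeds the dimension of the ambient space, it follows that $c_{m,n}(A) = \dim \mathcal E_A\bigl(\mathcal P(\mathbbm{k}, \Omega)(m,n)\bigr) \leqslant (\dim A)^{m+n}$, as claimed.

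There is essentially no serious obstacle here: the only real content is recognizing that the $(m,n)$-codimension is, by definition, the dimension of a subspace of the finite-dimensional space $\Hom_\mathbbm{k}(A^{\otimes m}, A^{\otimes n})$, after which the bound is immediate. The single point warranting a word of care is that finite-dimensionality of $A$ is exactly what guarantees both that $\dim A^{\otimes k} = (\dim A)^k$ and that the enveloping hom-space is finite-dimensional, so that the subspace-dimension comparison yields a finite bound.
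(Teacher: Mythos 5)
Your proposal is correct and follows exactly the same route as the paper: identify $\mathcal E_A\bigl(\mathcal P(\mathbbm{k}, \Omega)(m,n)\bigr)$ as a subspace of $\Hom_\mathbbm{k}(A^{\otimes m}, A^{\otimes n})$, whose dimension is $(\dim A)^{m+n}$, and conclude by monotonicity of dimension under inclusion. Your write-up is in fact slightly more careful than the paper's about why $\mathcal E_A(m) \cong A^{\otimes m}$ and where finite-dimensionality enters.
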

\begin{proof}
The space $\mathcal E_A\bigl(\mathcal P(\mathbbm{k}, \Omega)(m, n)\bigr)$ is the subspace of $\Hom_{\mathbbm k}(A^{\otimes m},  A^{\otimes n}) $ that consists of all linear maps $A^{\otimes m} \to A^{\otimes n}$ that can be constructed using the operations from $\Omega$ together with permutations of variables.
Now the upper bound on $c_{m,n}(A)$ follows from the equality $\dim \Hom_{\mathbbm k}(A^{\otimes m},  A^{\otimes n}) = (\dim A)^{m+n}$.
\end{proof}	

\begin{example} If $A$ is an ordinary algebra over a field $\mathbbm{k}$, then $c_{n,1}(A)$ is the $n$th ordinary codimension $c_n(A)$ of polynomial identities of $A$ for $n\in \mathbb N$.
\end{example}	

\subsection{$\Omega^*$-magmas}

For a set $\Omega$ together with maps $s,t \colon \Omega \to \mathbb Z_+$
define the set $\Omega^* := \lbrace \omega^* \mid \omega \in \Omega \rbrace$
and the maps $s,t \colon \Omega^* \to \mathbb Z_+$ by $s(\omega^*) := t(\omega)$
and $t(\omega^*) := s(\omega)$ for all $\omega \in \Omega$. We call the signature $\Omega^*$ \textit{dual}
to $\Omega$. By the definition, $\Omega^{**} := \Omega$.

\begin{example}
	Unital algebras are $\Omega$-magmas for $\Omega$ from Example~\ref{ExampleUnitalAlgebraOmega}
	while coalgebras are $\Omega^*$-magmas for the same $\Omega$.
\end{example}

The map $(-)^* \colon \Omega \to \Omega^*$ induces the contravariant functor $(-)^* \colon \mathcal M(\Omega) \to \mathcal M(\Omega^*)$ where $m^*:=m$ for objects, $(u \otimes v)^* := u^* \otimes v^*$  for morphisms, $\tau_{m,n}^*:= \tau^{-1}_{m^*,n^*}$. For every unital commutative ring $R$
the functor $(-)^*$ extends uniquely to an $R$-linear contravariant functor $(-)^* \colon \mathcal P(R,\Omega) \to \mathcal P(R,\Omega^*)$

Consider finite dimensional $\Omega$-algebras $A$ over a field $\mathbbm{k}$, which can be treated
as $\Omega$-magmas in the category $\mathbf{Vect}_\mathbbm{k}^\mathrm{f.d.}$
of finite dimensional vector spaces over $\mathbbm{k}$. After the natural identifications $\left(A^{\otimes n}\right)^* \cong \left(A^*\right)^{\otimes n}$, the space $A^*$
of linear functions $A\to \mathbbm{k}$ becomes an $\Omega^*$-algebra with $\omega_{A^*} := \omega_A^*$ for all $\omega \in \Omega$. The commutativity of the diagram below, where both horizontal arrows are bijections, implies that $f^*$ is a polynomial identity in $A^*$ if and only if $f$ is a polynomial identity in $A$ and $c_{m,n}(A)=c_{n,m}(A^*)$ for all $m,n \in \mathbb{Z}_+$:
$$\xymatrix{ \mathcal P(\mathbbm{k},\Omega)\left(m,n\right) \ar[rr]^{(-)^*} \ar[d]^{\mathcal E_A} & & \mathcal P(\mathbbm{k},\Omega^*)\left(n,m\right) \ar[d]^{\mathcal E_{A^*}}\\
\mathbf{Vect}_\mathbbm{k}^\mathrm{f.d.}\left(A^{\otimes m}, A^{\otimes n}\right) \ar[rr]^{(-)^*} & & \mathbf{Vect}_\mathbbm{k}^\mathrm{f.d.}\left((A^*)^{\otimes n}, (A^*)^{\otimes m}\right)
}
$$
  
 \section{Polynomial identities in graded and (co)module algebras}\label{SectionPIGr(Co)mod}
 
 \subsection{Polynomial $U$-identities}
 Motivated by applications of the reconstruction technique to $H$-(co)module algebras (see e.g. \cite[Chapter 5]{EGNObook}),
 we give the following definitions.
 
  Let $U \colon \mathcal C \to \mathcal D$ be a strong monoidal functor between a monoidal category $\mathcal C$ and a braided monoidal category $\mathcal D$.
Consider the set-theoretical monoid $\End(U)$ of (not necessarily monoidal) natural transformations $U \Rightarrow U$.
For every $\Omega$-magma $A$ in $\mathcal C$ the object $UA$ admits the structure of an $\Omega \sqcup \End(U)$-magma in $\mathcal D$
where for $h\in \End(U)$ we have $s(h):=t(h):=1$ and the operation $h_{UA} \colon UA \to UA$ is just the component $h_A$ of the natural transformation $h$.
Then the \textit{polynomial $U$-identities} of the $\Omega$-magma $A$ in $\mathcal C$ are the polynomial identities of the $\Omega \sqcup \End(U)$-magma $A$ in $\mathcal D$. If $\mathcal D$ is linear over a field $\mathbbm{k}$, one can define the codimensions
$c_{m,n}^U(A)$
of polynomial $U$-identities too.
Let $m,n \in \mathbb{Z}_+$.  Then $$c_{m,n}^U(A) := \dim \mathcal E_{UA}\bigl(\mathcal P(\mathbbm{k}, \Omega \sqcup \End(U))(m, n) \bigr)$$ is called the \textit{$(m,n)$-codimension of polynomial $U$-identities} of~$A$.

\subsection{Polynomial $H$-identities in $H$-module algebras}\label{SubsectionPIHmod}

In order to show why multilinear polynomial $H$-identities in $H$-module algebras and their codimensions are indeed a particular case of polynomial $U$-identities and their codimensions introduced above, we recall the corresponding definitions~\cite{BahturinLinchenko, BereleHopf, ASGordienko15}.  (We refer the reader to~\cite{DNR, Montgomery, SweedlerBook}
for an account of Hopf algebras and algebras with Hopf algebra actions.)

The free associative non-unital algebra $\mathbbm k \langle X \rangle$ over a field $\mathbbm k$ admits a $\mathbb Z$-grading  $\mathbbm k \langle X \rangle = \bigoplus\limits_{n=1}^\infty 
\mathbbm k \langle X \rangle^{(n)}$ where $\mathbbm k \langle X \rangle^{(n)}$ is the linear span of all monomials of total degree $n$.

Let $H$ be a Hopf algebra over the field $\mathbbm k$. Recall that the category ${}_H\mathsf{Mod}$ of left $H$-modules is monoidal where the monoidal product of $H$-modules $M$ and $N$ is their tensor product over the base field $\mathbbm k$
and $h(m\otimes n) := h_{(1)}m \otimes h_{(2)}n$ for all $h\in H$, $m\in M$, $n\in N$.
(Here we use Sweedler's notation $\Delta h=h_{(1)}\otimes h_{(2)}$ for the comultiplication $\Delta \colon H \to H \otimes H$
and $h\in H$, where the sign of sum is omitted.) An algebra $A$ over a field $\mathbbm k$ 
is an \textit{$H$-module algebra} if $A$ is a (left) $H$-module and the multiplication $A \otimes A \to A$
is an $H$-module homomorphism. 

Consider the algebra $$\mathbbm k \langle X  | H \rangle := \bigoplus\limits_{n=1}^\infty H^{{}\otimes n} \otimes
\mathbbm k \langle X \rangle^{(n)} $$ with the multiplication $(u \otimes v)(t \otimes w):= u \otimes t \otimes vw$
for
$u\in H^{{}\otimes m}$, $t\in H^{{}\otimes n}$, $v\in \mathbbm k \langle X \rangle^{(m)}$, $w\in \mathbbm k \langle X \rangle^{(n)}$, $m,n \in \mathbb N$. Then $\mathbbm k \langle X  | H \rangle$ is a left $H$-module algebra where $h(u \otimes v)=hu \otimes v$ for all $u\in H^{{}\otimes m}$, $v\in \mathbbm k \langle X \rangle^{(m)}$, $m \in \mathbb N$.
The subset $\lbrace  1 \otimes x_i \mid i\in \mathbb N\rbrace$ can be identified with $X$.
The algebra $\mathbbm k \langle X  | H \rangle$ is called the \textit{free $H$-module algebra}.
Note that $\mathbbm k \langle X  | H \rangle$ is just the (non-unital) tensor algebra of the free $H$-module generated by the set $X$.
Elements of $\mathbbm k \langle X  | H \rangle$ are called \textit{$H$-polynomials}.
 Every map $X \to A$, where $A$ is an associative $H$-module algebra, extends to a homomorphism $\mathbbm k \langle X  | H \rangle \to A$ of algebras and $H$-modules in a unique way. An $H$-polynomial $f\in \mathbbm k \langle X  | H \rangle$
 is called a \textit{polynomial $H$-identity} of $A$ if $\varphi(f)=0$ for all homomorphisms $\varphi \colon \mathbbm k \langle X  | H \rangle \to A$ of algebras and $H$-modules. The set $\Id^H(A)$ of polynomial $H$-identities of 
 $A$ is an $H$-invariant ideal of $ k \langle X  | H \rangle$.

 Let $x_{i_1}^{h_1} \ldots x_{i_n}^{h_n} := h_1 \otimes h_2 \otimes \dots \otimes h_n \otimes x_{i_1} \ldots x_{i_n}$
 for $h_1, \ldots, h_n \in H$, $i_1, \ldots, i_n, n \in\mathbb N$.
 The subspace $$P_n^H = \left\langle\left. x_{\sigma(1)}^{h_1} \ldots  x_{\sigma(n)}^{h_n} \right| \sigma\in S_n,\ h_i \in H \right\rangle_{\mathbbm k} \subset \mathbbm k \langle X  | H \rangle$$
 is called the space of \textit{multilinear $H$-polynomials} in variables $x_1, \ldots, x_n$.
 The dimension $c_n^H(A) := \dim\frac{P_n^H }{ P_n^H  \cap\, \Id^H(A)}$ is called the \textit{$n$th codimension
 of polynomial $H$-identities} of  $A$.

	Let $\mathcal C := {}_H\mathsf{Mod}$,	$\mathcal D := \mathbf{Vect}_{\mathbbm k}$ and let $U \colon \mathcal C \to \mathcal D$ be the corresponding
	forgetful functor.
	For an arbitrary natural transformation $\theta \colon U \Rightarrow U$ 
	define $h_\theta := \theta_H(1_H)$ where $H$ is considered to be a left module over itself. Let $m\in M$ where $M$ is a left $H$-module. Define the $H$-module homomorphism
	$f \colon H \to M$ by $f(h):= hm$, $h\in H$. The diagram below is commutative by the naturality of $\theta$:
	$$\xymatrix{ H \ar[r]^{\theta_H} \ar[d]_f & H \ar[d]_f \\
	M \ar[r]^{\theta_M} & M}$$
Hence $\theta_M(m)=\theta_M f(1_H)= f \theta_M (1_H) = f(h_\theta)=h_\theta m$.
Conversely, every element $h\in H$ defines a natural transformation $\theta \colon U \Rightarrow U$ 
by $\theta_M(m):=hm$ for $m\in M$ where $M$ is a left $H$-module.
Therefore, $\End(U)$ can be identified with $H$.

Now consider an $H$-module algebra $A$ and let $\Omega=\lbrace \mu \rbrace$, $s(\mu)=2$, $t(\mu)=1$.
Every element of $\mathcal P(\mathbbm{k}, \lbrace \mu \rbrace \sqcup \End(U))(n, 1)=\mathcal P(\mathbbm{k}, \lbrace \mu \rbrace \sqcup H)(n, 1)$, where $n\in \mathbb N$,
corresponds to a linear map $A^{{}\otimes n} \to A$ that can be represented by an $H$-polynomial.
Therefore, the codimension
$\dim \mathcal E_{UA}\bigl(\mathcal P(\mathbbm{k}, \lbrace \mu \rbrace \sqcup H)(n,1) \bigr)$
of polynomial $U$-identities
equals $c_n^H(A)$.
In contrast with traditional polynomial $H$-identities, where operators from $H$ are applied only to variables
themselves, not their products, in the elements of $\mathcal P(\mathbbm{k}, \lbrace \mu \rbrace \sqcup H)(m, n)$ operators from $H$ can be applied in any place. However if we let $V := V_\mathrm{assoc} \cup \lbrace h(xy)-(h_{(1)} x)(h_{(2)} y) \mid h\in H\rbrace$,
then $\mathcal P_V(\mathbbm{k}, \lbrace \mu \rbrace \sqcup H)(n, 1)$ can be identified with $P_n^H$.
Under this identification, polynomial identities correspond to polynomial identities.

In addition, one can consider a more general situation.
Recall that an algebra $A$ is an \textit{algebra with a generalized $H$-action} if
 $A$ is a left $H$-module for an associative unital algebra $H$
and  for every $h \in H$ there exist $k\in \mathbb N$ and $h'_i, h''_i, h'''_i, h''''_i \in H$ where $1\leqslant i \leqslant k$
such that
\begin{equation*}
h(ab)=\sum_{i=1}^k\bigl((h'_i a)(h''_i b) + (h'''_i b)(h''''_i a)\bigr) \text{ for all } a,b \in A.
\end{equation*}
Multilinear $H$-polynomials are introduced in the same way as for $H$-module algebras~\cite{BereleHopf,ASGordienko15}.
Again, multilinear $H$-polynomials of degree $n$ can be identified with elements of $\mathcal P_V(\mathbbm{k}, \lbrace \mu \rbrace \sqcup H)(n, 1)$ for $$V = V_\mathrm{assoc} \cup \left\lbrace\left. h(xy)-\sum\limits_{i=1}^k\bigl((h'_i x)(h''_i y) + (h'''_i y)(h''''_i x)\bigr) \right| h\in H\right\rbrace.$$
As before, polynomial identities correspond to polynomial identities.

\subsection{Graded polynomial identities and polynomial $H$-identities in $H$-comodule algebras}
Here we first recall the definition of polynomial $H$-identities in $H$-comodule algebras introduced in~\cite{AljaKassel}
as a generalization of graded polynomial identities (see e.g.~\cite[Section 10.5]{GiaZai}).

Let $H$ be again a Hopf algebra $H$ over a field $\mathbbm k$.
Then the category $\mathsf{Comod}^H$ of right $H$-comodules is monoidal
where for $H$-comodules $M$ and $N$ the comodule map $\rho_{M\otimes N} \colon M \otimes N \to M \otimes N \otimes H$
is defined by $\rho_{M\otimes N}(m \otimes n)=m_{(0)} \otimes n_{(0)} \otimes m_{(1)} n_{(1)}$
for all $m\in M$ and $n\in N$. (Here we use Sweedler's notation $\rho_M(m)=m_{(0)}\otimes m_{(1)}$
for $m\in M$ and  the comodule map $\rho_M \colon M \to M \otimes H$.) Below will denote the structure maps
for all right comodules by the same letter $\rho$.

Let $A$ be an \textit{$H$-comodule algebra}, i.e. a $\mathbbm k$-algebra that is a (right) $H$-comodule
where the multiplication $A \otimes A \to A$ is an $H$-comodule homomorphism.  
In order to define a polynomial $H$-identity,  E.~Aljadeff and C.~Kassel use $\mathbbm k \langle X  | H \rangle$ too,
however considering on $\mathbbm k \langle X  | H \rangle$ a structure of an $H$-comodule algebra.
Namely, $H$ is now a right $H$-comodule where the comodule map $\rho$ coincides with the comultiplication $\Delta$ on $H$, the spaces $\mathbbm k \langle X \rangle^{(n)}$ are trivial $H$-comodules where $\rho(w):= w \otimes 1$
and structure of an $H$-comodule on $\mathbbm k \langle X  | H \rangle$ is induced from $H$ and $\mathbbm k \langle X \rangle^{(n)}$
via the tensor product $\otimes$.
An element $f\in \mathbbm k \langle X  | H \rangle$ is a polynomial $H$-identity in $A$ if $\varphi(f)=0$
for all homomorphisms $\varphi \colon \mathbbm k \langle X  | H \rangle \to A$ of algebras and $H$-comodules.

\begin{example} Let $G$ be a group and let $\mathbbm k G$ be its group Hopf algebra.
	Recall that the comultiplication $\Delta \colon \mathbbm k G \to \mathbbm k G \otimes \mathbbm k G$,
	the counit $\varepsilon \colon \mathbbm k G \to \mathbbm k$ and the antipode $S  \colon \mathbbm k G \to \mathbbm k G$
	 are defined on $\mathbbm k G$ as follows: $\Delta g := g \otimes g$, $\varepsilon(g)=1$, $Sg:= g^{-1}$ for all $g\in G$.
	Moreover, right $\mathbbm k G$-modules are just $G$-graded vector spaces $M = \bigoplus\limits_{g\in G} M^{(g)}$
	where $\rho \colon M \to M \otimes  \mathbbm k G$ is defined by $\rho(m)=m\otimes g$ for $m\in M$, $g\in G$.
	Let $x^{(g)}_i := g \otimes x_i \in \mathbbm k \langle X  | \mathbbm k G \rangle$ for $g\in G$ and $i\in\mathbb N$. Then $\mathbbm k \langle X  | \mathbbm k G \rangle$ can be identified with the free associative non-unital algebra $\mathbbm k \langle X^{G\text{-}\mathrm{gr}} \rangle$ where $X^{G\text{-}\mathrm{gr}} := \left\lbrace\left. x^{(g)}_i \right|  g\in G,\ i\in\mathbb N\right\rbrace$.
	Expanding the definition of polynomial $\mathbbm k G$-identity for a $G$-graded algebra $A=\bigoplus\limits_{g\in G} A^{(g)}$,
	we obtain that $f \in \mathbbm k \langle X^{G\text{-}\mathrm{gr}} \rangle$ is a polynomial $\mathbbm k G$-identity
	if and only if $\varphi(f)=0$ for all algebra homomorphisms $\varphi \colon \mathbbm k \langle X^{G\text{-}\mathrm{gr}} \rangle \to A$
	such that $\varphi(X^{(g)}) \subseteq A^{(g)}$, which is exactly the definition of a \textit{$G$-graded polynomial identity}.
	Denote by $\Id^{G\text{-}\mathrm{gr}}(A)$ the set of $G$-graded polynomial identities of $A$, which is a graded ideal of 
	$\mathbbm k \langle X^{G\text{-}\mathrm{gr}} \rangle$.
The subspace $$P_n^{G\text{-}\mathrm{gr}} = \left\langle\left. x_{\sigma(1)}^{(g_1)} \ldots  x_{\sigma(n)}^{(g_n)} \right| \sigma\in S_n,\ g_i \in G \right\rangle_{\mathbbm k} \subset \mathbbm k \langle X^{G\text{-}\mathrm{gr}} \rangle$$
is called the space of \textit{multilinear $G$-graded polynomials} in variables $x_1, \ldots, x_n$.
The dimension $c_n^{G\text{-}\mathrm{gr}}(A) := \dim\frac{P_n^{G\text{-}\mathrm{gr}} }{\strut P_n^{G\text{-}\mathrm{gr}}  \cap\, \Id^{G\text{-}\mathrm{gr}}(A)}$ is called the \textit{$n$th codimension
	of $G$-graded polynomial identities} of  $A$.
\end{example}

When $H$ is different from a group algebra, the description of all $H$-comodule algebra homomorphisms
$\mathbbm k \langle X  | H \rangle \to A$ may become not as explicit as in the case of $H$-module algebras.
The main difference is the following. While  $\mathbbm k \langle X  | H \rangle$ is the tensor
algebra of the free $H$-module $H\otimes \langle x_1, x_2, x_3\ldots\rangle_{\mathbbm k}$ and
all $H$-module algebra homomorphisms from  $\mathbbm k \langle X  | H \rangle$
are defined by images of $x_i$, the same vector space
$H\otimes \langle x_1, x_2, x_3\ldots\rangle_{\mathbbm k}$ is not free, but
a cofree right $H$-comodule. 

On the other hand, every right $H$-comodule is a left $H^*$-module where $H^*$ is the algebra dual to the coalgebra $H$.
Namely, $\gamma m := \gamma(m_{(1)})m_{(0)}$ for all $\gamma \in H^*$, $m\in M$ and an $H$-comodule $M$.
Moreover, all $H^*$-module homomorphisms between two comodules are $H$-comodule homomorphisms
and vice versa. The dual $H^*$ of a finite dimensional Hopf algebra $H$ is again a Hopf algebra. Moreover,
by the Larson~--- Sweedler theorem,
$H$ is co-Frobenius, that is $H \cong H^*$ as left $H^*$-modules (see e.g.~\cite{Lin} and~\cite[Theorem 2.1.3]{Montgomery}),
and $$\mathbbm k \langle X  | H \rangle \cong \mathbbm k \langle X  | H^* \rangle$$
as $H^*$-module algebras.

	Let $\mathcal C := \mathsf{Comod}^H$ for an arbitrary Hopf algebra $H$ over a field $\mathbbm k$,
	$\mathcal D := \mathbf{Vect}_{\mathbbm k}$ and let $U \colon \mathcal C \to \mathcal D$ be the corresponding
	forgetful functor.
	For an arbitrary natural transformation $\theta \colon U \Rightarrow U$ 
	define the linear map $\gamma \colon H \to \mathbbm k$ by
	$\gamma(h):= \varepsilon(\theta_H(h))$ for $h\in H$ where $H$ is considered to be a right module over itself
	via the comultiplication $\Delta \colon H \to H \otimes H$.
	Let $M$ be a right $H$-comodule and let $\alpha \colon M \to  \mathbbm k$ be an arbitrary linear function.
	Denote by $\rho \colon M \to M \otimes H$ the comodule structure map on $M$.
	Then $(\alpha \otimes \id_H) \rho \colon M \to H$ is a comodule homomorphism.
	Therefore the diagram below is commutative by the naturality of $\theta$:
	$$\xymatrix{
		M \ar[rr]^{\theta_M} \ar[d]_{(\alpha \otimes \id_H) \rho} & \quad\quad & M \ar[d]_{(\alpha \otimes \id_H) \rho}\\
		 H \ar[rr]^{\theta_H}  & & H  \\
		}$$ 
	
	Hence $(\alpha \otimes \id_H) \rho \theta_M = \theta_H (\alpha \otimes \id_H) \rho$.
	Applying the counit $\varepsilon \colon H \to \mathbbm k$ to both sides and substituting an arbitrary element $m$ for the argument, we get
	$$(\alpha \otimes \varepsilon) \rho \theta_M (m) = \gamma(m_{(1)})\alpha(m_{(0)}),$$
	$$\alpha(\theta_M (m)_{(0)}) \varepsilon(\theta_M (m)_{(1)}) = \alpha \bigl( \gamma(m_{(1)})m_{(0)} \bigr),$$
	$$\alpha(\theta_M (m)) = \alpha \bigl(\gamma(m_{(1)})m_{(0)} \bigr).$$

	Since $\alpha \in M^*$ is arbitrary, we get $\theta_M (m) = \gamma(m_{(1)})m_{(0)}$.
	Conversely, every $\gamma\in H^*$ defines the natural transformation $\theta \colon U \Rightarrow U$ 
	by $\theta_M (m) := \gamma(m_{(1)})m_{(0)}$ for $m\in M$ where $M$ is a right $H$-comodule.
		Therefore, $\End(U)$ can be identified with $H^*$.
	
	Let $\Omega=\lbrace \mu \rbrace$, $s(\mu)=2$, $t(\mu)=1$.
	If $H$ is finite dimensional, then, as we have mentioned above, $H \cong H^*$ as left $H^*$-modules
	and  $\mathcal P_V(\mathbbm{k}, \lbrace \mu \rbrace \sqcup H^*)(n, 1)$
	for $V = V_\mathrm{assoc} \cup \lbrace h(xy)-(h_{(1)} x)(h_{(2)} y) \mid h\in H^*\rbrace$
	can be identified with the subspace of $\mathbbm k \langle X  | H \rangle$ consisting of all multilinear $H$-polynomials
	in $x_1, \ldots, x_n$. Again, under this identification polynomial identities correspond to polynomial identities.
	 In particular, for an algebra $A$ graded by a finite group $G$
	we have the equality of codimensions
	$$\dim \mathcal E_{UA}\bigl(\mathcal P(\mathbbm{k}, \lbrace \mu \rbrace \sqcup (\mathbbm{k}G)^*)(n,1) \bigr) = c_n^{G\text{-}\mathrm{gr}}(A).$$
	 By~\cite[Lemma 7.1]{ASGordienko9},	the equality above still holds if $G$ is infinite but $A$ is finite dimensional.

\section{Examples and applications}\label{SectionExamplesApplications}

\subsection{Vector spaces}

Every object $A$ in a braided monoidal category $\mathcal C$ can be considered as an $\varnothing$-magma.
However, the category $\mathcal M(\varnothing)$ still has non-trivial morphisms resulting from the braiding in $\mathcal C$.
At the same time, $\mathcal M(\varnothing)(m,n)=\varnothing$ for $m\ne n$.

Let $W$ be a vector space over a field $\mathbbm k$.
Recall that the category $\mathbf{Vect}_{\mathbbm k}$ is symmetric, which implies that 
the polynomial identity $\tau_{1,1}^2-\id_2\equiv 0$
 holds in $W$.
 By the Coherence Theorem for the symmetric categories applied to
 $\mathcal M_{V_\mathrm{symm}}(\varnothing)$, where $V_\mathrm{symm} := \lbrace\tau_{1,1}^2-\id_2\rbrace$ (see Remark~\ref{RemarkTau11}), there exists an isomorphism
between the $n$th symmetric group $S_n$ and the monoid 
 $\mathcal M_{V_\mathrm{symm}}(\varnothing)(n,n)$ where the
morphism $f \colon n \to n$  corresponds to the permutation $\sigma \in S_n$ such that
$$(\mathcal E_Wf)(a_1 \otimes a_2 \otimes \dots \otimes a_n) = a_{\sigma^{-1}(1)} \otimes a_{\sigma^{-1}(2)} \otimes \dots \otimes a_{\sigma^{-1}(n)}
\text{ for all } a_1, \ldots, a_n \in W.$$
This bijection extends to the isomorphism $\theta \colon \mathbbm{k} S_n \mathrel{\widetilde\to} \mathcal P_{V_\mathrm{symm}}(\varnothing, \mathbbm{k})(n,n)$
of algebras.

\begin{theorem} Let $W$ be a vector space over a field $\mathbbm k$, $\mathop\mathrm{char} \mathbbm{k} = 0$, $\dim W = d \in\mathbb Z_+$.
Then all polynomial
$\varnothing$-identities in $W$ with coefficients in~$\mathbbm k$
follow from the set
\begin{equation*}\begin{split}
		V_{\mathrm{symm},d} := V_\mathrm{symm} \cup 
		\left\lbrace
		\sum\limits_{\sigma\in S_{d+1}}
		(\sign\sigma)
		\theta(\sigma)\right\rbrace.\end{split}\end{equation*}
Moreover $c_{m,n}(W) = 0$ for $m\ne n$ and $$ c_{n,n}(W)
 \sim \alpha_d n^{-\frac{d^2-1}{2}}
d^{2n} \text{ as }n\to\infty$$
where $\alpha_d := \left(\frac{1}{\sqrt{2\pi}}\right)^{d-1}
\left(\frac{1}{2}\right)^{(d^2-1)/2} \cdot 1! \cdot 2! \cdot\ldots
\cdot (d-1)! \cdot d^{d^2/2}$.
\end{theorem}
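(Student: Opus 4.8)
The plan is to reduce everything to the representation theory of the symmetric groups $S_n$ through the isomorphism $\theta\colon \mathbbm{k}S_n \mathrel{\widetilde\to}\mathcal P_{V_\mathrm{symm}}(\varnothing,\mathbbm{k})(n,n)$ recalled above. Since $\mathcal M(\varnothing)(m,n)=\varnothing$ for $m\ne n$, the free module $\mathcal P(\mathbbm{k},\varnothing)(m,n)$ vanishes for $m\ne n$, whence $c_{m,n}(W)=0$; this settles the off-diagonal claim. For the diagonal, the identity $\tau_{1,1}^2\equiv\id_2$ holds in $W$, so $\mathcal E_W$ factors through $\mathcal P_{V_\mathrm{symm}}(\varnothing,\mathbbm{k})(n,n)\cong\mathbbm{k}S_n$ and realizes the group algebra as the permutation representation on $W^{\otimes n}$; thus $c_{n,n}(W)=\dim\im\left(\mathcal E_W|_{\mathbbm{k}S_n}\right)$ is the dimension of the image of $\mathbbm{k}S_n$ inside $\End_{\mathbbm{k}}(W^{\otimes n})$. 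By Schur--Weyl duality in characteristic $0$ this image is the centralizer $\End_{\GL(W)}(W^{\otimes n})$, and the double centralizer theorem supplies $\mathbbm{k}S_n\cong\bigoplus_{\lambda\vdash n}\End(S^\lambda)$ together with $W^{\otimes n}\cong\bigoplus_{\lambda\vdash n}S^\lambda\otimes W_\lambda$, where $S^\lambda$ is the Specht module, $f^\lambda:=\dim S^\lambda$, and the irreducible $\GL(W)$-module (Weyl module) $W_\lambda$ is nonzero exactly when $\lambda$ has at most $d:=\dim W$ rows. Consequently $\Ker\left(\mathcal E_W|_{(n,n)}\right)=\bigoplus_{\ell(\lambda)>d}\End(S^\lambda)$ and $c_{n,n}(W)=\sum_{\lambda\vdash n,\ \ell(\lambda)\le d}(f^\lambda)^2$, where $\ell(\lambda)$ denotes the number of rows of $\lambda$.

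For the basis statement I would identify the two-sided ideal of $\mathbbm{k}S_n$ generated by the antisymmetrizer $e:=\sum_{\sigma\in S_{d+1}}(\sign\sigma)\,\sigma$ (acting on the first $d+1$ strands) with this kernel. The inclusion into the kernel is immediate: $e$ antisymmetrizes $d+1$ tensor factors, and $\Lambda^{d+1}W=0$ forces $\mathcal E_W(e)=0$; since $\Id(W,\mathbbm{k})$ is a monoidal ideal, so is everything it generates. For the reverse inclusion I would use Young symmetrizers: whenever $\ell(\lambda)\ge d+1$ the first column of $\lambda$ has at least $d+1$ boxes, so the column-antisymmetrizer factor of the Young symmetrizer $c_\lambda$ contains an antisymmetrizer over $d+1$ positions conjugate to $e$; hence $c_\lambda$, and therefore the whole block $\mathbbm{k}S_n\,c_\lambda\,\mathbbm{k}S_n=\End(S^\lambda)$, lies in the ideal generated by $e$. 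Thus this ideal equals $\bigoplus_{\ell(\lambda)>d}\End(S^\lambda)=\Ker\mathcal E_W$, which is exactly the assertion that every $\varnothing$-identity of $W$ follows from $V_{\mathrm{symm},d}$: the minimal monoidal ideal of $\mathcal P_{V_\mathrm{symm}}(\varnothing,\mathbbm{k})$ containing $e$ restricts in each degree $(n,n)$ to this two-sided ideal, the various placements of $e$ on $d+1$ of the $n$ strands being identified by the braiding.

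It remains to extract the asymptotics from $c_{n,n}(W)=\sum_{\lambda\vdash n,\ \ell(\lambda)\le d}(f^\lambda)^2=\dim\End_{\GL(W)}(W^{\otimes n})$. Passing to the compact form and using that $W^{\otimes n}$ has character $(\tr g)^n$, I would rewrite this dimension as the Hilbert--Schmidt norm $\int_{U(d)}|\tr g|^{2n}\,dg$, and then apply the Weyl integration formula to reduce to an integral over the torus of $\bigl|\sum_{j=1}^d e^{i\theta_j}\bigr|^{2n}$ against the Vandermonde weight $\prod_{j<k}|e^{i\theta_j}-e^{i\theta_k}|^2$. A Laplace analysis about the maxima $\theta_1=\dots=\theta_d$, where $|\tr g|$ attains its maximal value $d$, produces a Gaussian integral in the $d-1$ relative angles: the Gaussian volume contributes $n^{-(d-1)/2}$, while the Vandermonde weight, scaling like $n^{-d(d-1)/2}$, yields the exponent $-(d^2-1)/2$ in agreement with the statement. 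The precise constant $\alpha_d$ then emerges from the limiting Mehta-type integral $\int_{\mathbb R^{d-1}}\prod_{j<k}(\psi_j-\psi_k)^2\,e^{-\frac12\sum_j\psi_j^2}\prod_j d\psi_j$, which supplies the product $1!\,2!\cdots(d-1)!$, the powers of $2\pi$ and of $\tfrac12$, and the factor $d^{d^2/2}$ coming from the Hessian and the change of variables. I expect this bookkeeping of constants in the saddle-point evaluation to be the main obstacle; it is precisely Regev's asymptotic estimate for the number of standard Young tableaux with a bounded number of rows, so one may alternatively invoke that result (see the discussion of codimension growth in~\cite{GiaZai}) to conclude.
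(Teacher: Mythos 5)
Your proposal is correct and follows the paper's strategy in outline --- both arguments reduce the problem to the two-sided ideal structure of $\mathbbm{k}S_n \cong \mathcal P_{V_\mathrm{symm}}(\varnothing,\mathbbm{k})(n,n)$ and identify $\Ker(\mathcal E_W\theta)$ with the sum of the matrix blocks indexed by partitions of height greater than $d$ --- but you reach that identification by a different key lemma. The paper does not invoke Schur--Weyl duality: it shows directly that $\mathcal E_W\theta\left(e^{*}_{T_{\lambda}}\right)\ne 0$ for $\height\lambda\leqslant d$ by applying $e^{*}_{T_{\lambda}}=b_{T_{\lambda}}a_{T_{\lambda}}$ to an explicit tensor of basis vectors (with $a_i:=b_j$ when $i$ lies in row $j$ of $T_\lambda$), and then uses that each Young symmetrizer lies in its minimal two-sided ideal to conclude that the kernel is exactly the sum of the blocks with $\height\lambda>d$; you instead get the same kernel and the same formula $c_{n,n}(W)=\sum_{\height\lambda\leqslant d}\bigl(\dim M(\lambda)\bigr)^2$ from the double centralizer theorem. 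On the one delicate point of the basis statement your write-up is actually more explicit than the paper's: the paper asserts rather tersely that the image of $\Ker(\mathcal E_W\theta)$ in $\mathcal P_{V_{\mathrm{symm},d}}(\varnothing,\mathbbm{k})(n,n)$ is zero, whereas you justify it by factoring the column antisymmetrizer of any $\lambda$ with $\height\lambda>d$ through a $(d+1)$-antisymmetrizer conjugate to the generator, so that the whole simple block lies in the two-sided (hence monoidal) ideal it generates. For the asymptotics the paper simply cites Regev's strip theorem; your Weyl-integration and Laplace analysis would reprove it, but, as you yourself note, the constant bookkeeping is precisely Regev's computation, so citing \cite{RegevStrip} as the paper does is the economical choice.
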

\begin{proof}
One of the main tools in the investigation of polynomial
identities is provided by the representation theory of symmetric groups. Recall that irreducible representations of $S_n$ are described by partitions $\lambda =(\lambda_1, \ldots, \lambda_s) \vdash n$ where $\lambda_1 \geqslant \lambda_2 \geqslant \dots \geqslant
\lambda_s \geqslant 0$, $\sum\limits_{i=1}^s \lambda_i = n$,
and their diagrams $D_\lambda$. Let  $e_{T_{\lambda}}=a_{T_{\lambda}} b_{T_{\lambda}}$
 and
 $e^{*}_{T_{\lambda}}=b_{T_{\lambda}} a_{T_{\lambda}}$
 where
 $a_{T_{\lambda}} = \sum\limits_{\pi \in R_{T_\lambda}} \pi$
 and
 $b_{T_{\lambda}} = \sum\limits_{\sigma \in C_{T_\lambda}}
 (\sign \sigma) \sigma$,
 be the Young symmetrizers corresponding to a Young tableau~$T_\lambda$.
  Then $M(\lambda) = \mathbbm kS_n e_{T_\lambda} \cong \mathbbm k S_n e^{*}_{T_\lambda}$
 is an irreducible $\mathbbm kS_n$-module corresponding to
 a partition~$\lambda \vdash n$. We refer the reader to~\cite{Bahturin, DrenKurs, GiaZai}
 for an account of $S_n$-representations and their applications to polynomial
 identities.

 The polynomial identity
$\sum\limits_{\sigma\in S_{d+1}}
(\sign\sigma)
\theta(\sigma)\equiv 0$ 
holds in $W$ since $\dim W = d$.
Therefore, $\mathcal{E}_W\theta$ factors through $\mathcal P_{V_{\mathrm{symm},d}}(\varnothing, \mathbbm{k})(n,n)$. Hence all Young symmetrizers corresponding to Young diagrams $D_\lambda$ of height $\mathop\mathrm{ht} \lambda$ greater than $d$
are mapped by $\mathcal{E}_W\theta$ to $0$. On the other hand,
given a Young tableau $T_\lambda$ where $\mathop\mathrm{ht} \lambda \leqslant d$, apply
 $\mathcal{E}_W\theta\left(e^{*}_{T_{\lambda}}\right)$ to the tensor product
 $a_1 \otimes \dots \otimes a_n$
  of basis elements $b_1, \ldots, b_d$ of $W$ where $a_i := b_j$ if the number $i$ appears in the $j$th row of $T_\lambda$.
  Then the result is nonzero, whence $\mathcal{E}_W\theta\left(e^{*}_{T_{\lambda}}\right)\ne 0$. 
  
  Recall that the group algebra $\mathbbm k S_n$ is the direct sum of minimal ideals,
  each of which is the direct sum of isomorphic irreducible representations of $S_n$.
  Every Young symmetrizer belongs to the corresponding minimal ideal.
  Thus $\Ker\left(\mathcal{E}_W\theta\right)$ is generated as an ideal by Young symmetrizers $e^{*}_{T_{\lambda}}$ with $\mathop\mathrm{ht} \lambda > d$
  and the image of $\Ker\left(\mathcal{E}_W\theta\right)$ in $\mathcal P_{V_{\mathrm{symm},d}}(\varnothing, \mathbbm{k})(n,n)$ is zero. Therefore, all polynomial identities of $W$ are indeed generated by the set $V_{\mathrm{symm},d}$.
Moreover, the image of $\mathcal{E}_W\theta$ is isomorphic as a vector space to the direct sum of $M(\lambda)$ 
taken with the same multiplicity as in $\mathbbm k S_n$, i.e. $\dim M(\lambda)$, where $\lambda \vdash n$,
$\mathop\mathrm{ht} \lambda \leqslant d$.
Hence $$c_{n,n}(W)=\sum\limits_{\substack{\lambda \vdash n, \\ \mathop\mathrm{ht} \lambda \leqslant d}} \bigl(\dim M(\lambda)\bigr)^2.$$
By~\cite[Section 4.5, Case 2]{RegevStrip}, $c_{n,n}(W)$ has the required asymptotic behavior.
\end{proof}
\begin{remark}
	If $d:=\dim W =0$, then
	$\sum\limits_{\sigma\in S_{d+1}}
	(\sign\sigma)
	\theta(\sigma)\equiv 0$ reduces to $\id_1 \equiv 0$ and $c_{m,n}(W)=0$ for all $m,n\in\mathbb Z_+$.
\end{remark}	

\subsection{Hopf algebras}\label{SubsectionHopfAlgebras}
Every Hopf algebra $H$ over a field $\mathbbm k$ is an $\Omega$-magma in $\mathbf{Vect}_{\mathbbm k}$
where $\Omega=\lbrace \mu, u, \Delta, \varepsilon, S \rbrace$ are symbols for
the multiplication $\mu_H \colon H \otimes H \to H$, the unit $u_H \colon \mathbbm k \to H$,
the comultiplication $\Delta_H \colon H  \to H \otimes H$
the counit $\varepsilon_H \colon H  \to \mathbbm k$ and the antipode $S_H \colon H \to H$.
A vector space $H$ endowed with the maps listed above
is a Hopf algebra if the diagrams corresponding to the conditions from Section~\ref{SubsectionMonComonHopfMon} are commutative. Since the commutativity of each diagram means that the corresponding polynomial identity holds in $H$,
Hopf algebras form a variety $\mathrm{Var}(V_\mathbf{Hopf})$ where 
\begin{equation}\label{EqVHopf}\begin{split}
V_\mathbf{Hopf}=V_\mathrm{symm} \cup \lbrace\mu(\mu \otimes \id_1)-\mu(\id_1 \otimes \mu ),\quad
\mu (u\otimes \id_1) - \id_1,\quad
\mu (\id_1 \otimes u) - \id_1,\\
(\Delta \otimes \id_1)\Delta - (\id_1 \otimes \Delta)\Delta,\quad
(\varepsilon \otimes \id_1)\Delta - \id_1, \quad
(\id_1 \otimes \varepsilon)\Delta - \id_1, \\
 \varepsilon u - \id_0, \quad
 \varepsilon\mu - \varepsilon \otimes \varepsilon, \\
 \Delta u - u \otimes u,\quad
 \Delta\mu - (\mu \otimes \mu)(\id_1 \otimes \tau_{1,1} \otimes \id_1)(\Delta \otimes \Delta),\\
 \mu(S\otimes \id_1)\Delta - u\varepsilon,\quad
 \mu(\id_1 \otimes S)\Delta - u\varepsilon\rbrace.
 \end{split}
 \end{equation}

 On the one hand, in order to depict maps from 
$\mathcal E_H\bigl(\mathcal M(\mathbbm{k}, \Omega)(m, n) \bigr)$ one can use the graphical calculus (see e.g.~\cite{KasselQuantumGroups}). On the other hand, one can represent compositions in the usual functional notation.
For example, $\mu(\mu \otimes \id_1)$ can be written as $(xy)z$ or $(x_1 x_2)x_3$ and 
$(\Delta \otimes \id_1)\Delta$ can be written as $x_{(1)(1)}\otimes x_{(1)(2)}\otimes x_{(2)}$.

A Hopf monoid $H$ is \textit{cocommutative} if the polynomial identity $x_{(1)}\otimes x_{(2)}\equiv x_{(2)}\otimes x_{(1)}$
holds in $H$. Denote the category $\mathcal P_{V_\mathbf{CocHopf}}(\mathbbm{k}, \Omega)$, where $$V_\mathbf{CocHopf} := V_\mathbf{Hopf} \cup \lbrace 
x_{(1)}\otimes x_{(2)} - x_{(2)}\otimes x_{(1)}\rbrace,$$
by $\mathcal P_{\mathbf{CocHopf}}(\mathbbm{k})$.
Then $1$ is a cocommutative Hopf monoid in $\mathcal P_{\mathbf{CocHopf}}(\mathbbm{k})$.
The standard convolution techniques (see e.g.~\cite[Proposition~4.2.7]{DNR}) show that
$S^2=\id_1$. Using the compatibility conditions and properties of the antipode $S$, in every monomial $f$ in $\mathcal P_{\mathbf{CocHopf}}(\mathbbm{k})$ 
we move the multiplication, the antipode and the unit to the codomain and the comultiplication and the counit to the domain.
We get
 $$f(x_1, \ldots, x_m) = \varepsilon(x_{i_1}) \ldots \varepsilon(x_{i_s}) \bigotimes\limits_{k=1}^n   S^{\alpha_{k1}}x_{j_{k1}(\ell_{k1})} \ldots S^{\alpha_{kt_k}}x_{j_{kt_k}(\ell_{kt_k})}$$
where $\alpha_{kr} \in \lbrace 0,1 \rbrace$, $i_1 < \ldots < i_s$, every symbol $x_q$ or $x_{q(r)}$ for given $1\leqslant q \leqslant m$ and $r \in \mathbb Z_+$ appears 
 no more than once and  there exist numbers $p_q \in \mathbb Z_+$
such that \begin{equation*}\begin{split}
  \lbrace  x_{i_1}, \ldots, x_{i_s}  \rbrace
\sqcup		
		\bigsqcup\limits_{k=1}^n  \lbrace  x_{j_{k1}(\ell_{k1})}, \ldots, x_{j_{kt_k}(\ell_{kt_k})}  \rbrace
\\=\lbrace x_q \mid 1\leqslant q \leqslant m,\ p_q = 0 \rbrace
\sqcup \lbrace x_{q(r)} \mid  1\leqslant q \leqslant m,\  p_q > 0,\ 1\leqslant r\leqslant p_q  \rbrace.\end{split}\end{equation*}
Above we use the Sweedler notation
$$x_{q(1)}\otimes \dots \otimes x_{q(p_q)} := \left\lbrace\begin{array}{cc}(\Delta \otimes \id_{p_q-2})\ldots(\Delta \otimes \id_1)\Delta x_q &
\text{ if } p_q \geqslant 2, \\
x_q &\text{ if }p_q = 1. \end{array}\right.$$

By the cocommutativity, we may assume that if $j_{k\alpha} = j_{q\beta}$
but either $k < q$ or $k=q$ and $\alpha < \beta$, then $\ell_{k\alpha} < \ell_{q\beta}$. In addition, using the identity $h_{(1)}Sh_{(2)}=(Sh_{(1)})h_{(2)}=\varepsilon(h)$ for all $h\in H$,
we exclude the entries $\left(S x_{q(r)}\right) x_{q(r+1)}$ and
$x_{q(r)} S x_{q(r+1)}$.

Now we define the linear map
$\varphi_{m,n} \colon \mathbbm{k}\mathcal F (x_1, \dots, x_m)^{{}\otimes n} \to \mathcal P_{\mathbf{CocHopf}}(\mathbbm{k})(m,n)$ where $m,n \in\mathbb Z_+$ and $\mathcal F (X)$ is the free group on a set $X$.
Consider the standard basis  in $\mathbbm{k}\mathcal F (x_1, \ldots, x_m)^{{}\otimes n}$ consisting of the elements $w_1 \otimes w_2 \otimes \dots \otimes w_n$ where $w_i \in \mathcal F (x_1, \ldots, x_m)$, $1\leqslant i \leqslant n$.
By the definition, $\varphi_{m,n}(w_1 \otimes w_2 \otimes \dots \otimes w_n)$
is obtained from $w_1 \otimes w_2 \otimes \dots \otimes w_n$ as follows:
\begin{enumerate}
	\item we replace all $x_j^{-1}$ with $Sx_j$;
	\item if some $x_j$ appears more than once, we replace each $x_j$ with $x_{j(r)}$ where $r$ is the number of the entry;
	\item if some $x_j$ does not appear, we add $\varepsilon(x_j)$ in the beginning of the expression.
\end{enumerate}
 The map $\varphi_{m,n}$ is extended on $\mathbbm{k}\mathcal F (x_1, \dots, x_m)^{{}\otimes n}$ by the linearity.
\begin{example}
	$$\varphi_{5,2}\left(x_1 x_2 x_1^{-1} \otimes x_5 x_2^{-1} x_1^2\right) = \varepsilon(x_3)\varepsilon(x_4)\, x_{1(1)}
	x_{2(1)} S x_{1(2)} \otimes x_5 \left( Sx_{2(2)}\right) x_{1(3)} x_{1(4)}.$$
\end{example}

\begin{theorem}
	For every $m,n\in \mathbb Z_+$ the map $\varphi_{m,n}$ is a linear bijection. Moreover, if $\mathcal F ( x_1, x_2, \ldots )$ is the free group
	of the countable rank, then the (multilinear) polynomial identities  
	with coefficients in~$\mathbbm k$ of $\mathbbm{k}\mathcal F ( x_1, x_2, \ldots)$ as a Hopf algebra  are generated by the identity $x_{(1)}\otimes x_{(2)} - x_{(2)}\otimes x_{(1)}\equiv 0$ of cocommutativity.
\end{theorem}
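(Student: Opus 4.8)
The plan is to derive both assertions from a single evaluation on grouplike elements, so that the bijectivity of $\varphi_{m,n}$ and the generation statement come out together. First I would dispose of surjectivity of $\varphi_{m,n}$ using the normal-form reduction already carried out before the theorem: every morphism in $\mathcal P_{\mathbf{CocHopf}}(\mathbbm{k})(m,n)$ is a $\mathbbm k$-linear combination of normal-form monomials. From such a monomial I read off, for each tensor slot $k$, the word $w_k$ whose letters are $x_{j_{kr}}$ carrying exponent $-1$ exactly when $\alpha_{kr}=1$, and I record the absent generators through the factors $\varepsilon(x_{i_t})$. The antipode exclusions of the entries $(Sx_{q(r)})x_{q(r+1)}$ and $x_{q(r)}Sx_{q(r+1)}$ are precisely the requirement that each $w_k$ be freely reduced, and the cocommutativity normalization of the copy indices $\ell$ is precisely the ordering convention used in the definition of $\varphi_{m,n}$. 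Hence the monomial equals $\varphi_{m,n}(w_1\otimes\cdots\otimes w_n)$, and $\varphi_{m,n}$ is onto.

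Next I would set $A:=\mathbbm k\mathcal F(x_1,x_2,\dots)$, which is a cocommutative Hopf algebra with every $x_q$ grouplike. Since a grouplike element $g$ satisfies $\Delta g=g\otimes g$, $Sg=g^{-1}$ and $\varepsilon(g)=1$, applying $\mathcal E_A(\varphi_{m,n}(w_1\otimes\cdots\otimes w_n))$ to $x_1\otimes\cdots\otimes x_m\in A^{\otimes m}$ sends every copy $x_{q(r)}$ to $x_q$, every $Sx_{q(r)}$ to $x_q^{-1}$, and every factor $\varepsilon(x_{i_t})$ to $1$. The multiplication then reassembles the $k$-th tensor slot into the group element $w_k$, so the value is exactly $w_1\otimes\cdots\otimes w_n\in A^{\otimes n}$. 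This is the crux of the argument: the evaluation on grouplikes turns each Hopf-algebra normal form back into the free-group word it came from.

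Injectivity and faithfulness now follow simultaneously. Because the reduced words form a $\mathbbm k$-basis of $\mathbbm k\mathcal F$, the elements $w_1\otimes\cdots\otimes w_n$, as $(w_1,\dots,w_n)$ ranges over tuples of reduced words, are linearly independent in $A^{\otimes n}$. Thus the composite $\mathcal E_A\circ\varphi_{m,n}$ carries a basis to a linearly independent set and is injective; in particular, if $\xi$ satisfies $\mathcal E_A(\varphi_{m,n}(\xi))=0$ then evaluating at $x_1\otimes\cdots\otimes x_m$ forces $\xi=0$. Injectivity of the composite gives injectivity of $\varphi_{m,n}$, which together with the surjectivity above makes $\varphi_{m,n}$ a linear bijection; and since $\varphi_{m,n}$ is then invertible, injectivity of $\mathcal E_A\circ\varphi_{m,n}$ yields injectivity of $\mathcal E_A$ on each hom-set, i.e.\ faithfulness of the functor $\mathcal E_A\colon\mathcal P_{\mathbf{CocHopf}}(\mathbbm k)\to\mathbf{Vect}_{\mathbbm k}$.

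Finally, as $A$ is a cocommutative Hopf algebra it lies in $\mathrm{Var}(V_{\mathbf{CocHopf}})$, and by the definition of generation the faithfulness of $\mathcal E_A$ says exactly that $V_{\mathbf{CocHopf}}=V_{\mathbf{Hopf}}\cup\{x_{(1)}\otimes x_{(2)}-x_{(2)}\otimes x_{(1)}\}$ generates the polynomial identities of $A$; relative to the Hopf-algebra axioms these are generated by cocommutativity. I expect the main obstacle to be not any single computation but the bookkeeping of the first step — verifying that the antipode cancellations and the copy-index normalization of the normal form correspond bijectively to freely reduced words. The evaluation on grouplikes is precisely what lets me avoid proving linear independence of the normal forms inside $\mathcal P_{\mathbf{CocHopf}}(\mathbbm k)$ abstractly, by instead exhibiting a concrete Hopf algebra that separates them.
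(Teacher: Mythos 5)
Your proposal is correct and follows essentially the same route as the paper: surjectivity from the preceding normal-form reduction, then substitution of the grouplike generators $x_j$ of $\mathcal F(x_1,x_2,\ldots)$ to show that $\mathcal E_A\circ\varphi_{m,n}$ sends the basis $w_1\otimes\cdots\otimes w_n$ to linearly independent elements, giving injectivity of $\varphi_{m,n}$ and faithfulness of $\mathcal E_A$ simultaneously. You merely spell out more explicitly the evaluation on grouplikes and the correspondence between the antipode cancellations and freely reduced words, which the paper leaves implicit.
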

\begin{proof}
	The surjectivity of $\varphi_{m,n}$ follows from the remarks made above.
	Replacing every variable $x_j$ in an element of $\mathcal P_{\mathbf{CocHopf}}(\mathbbm{k})(m,n)$
	with the generator $x_j$ of $\mathcal F ( x_1, x_2, \ldots )$,
	we see that the images of $w_1 \otimes w_2 \otimes \dots \otimes w_n$
	under the composition $\mathcal E_{\mathcal F ( x_1, x_2, \ldots )} \varphi_{m,n}$
	are linearly independent, which implies that $\varphi_{m,n}$ is injective.
	
	If there exists a polynomial identity of $\mathbbm{k}\mathcal F ( x_1, x_2, \ldots)$
	that does not follow from the cocommutativity,
	then  $\mathcal E_{\mathcal F ( x_1, x_2, \ldots )} f$ is
	the zero linear map
	for some $f \in \mathcal P_{\mathbf{CocHopf}}(\mathbbm{k})(m,n)$
	 and $m,n\in \mathbb Z_+$. The substitution made above implies $f=0$.
\end{proof}

\begin{remark}
	The linear bijections $\varphi_{m,n}$ provide a description of the category $\mathcal P_{\mathbf{CocHopf}}(\mathbbm{k})$.
\end{remark}

\subsection{Commutative cocommutative Hopf algebras}\label{SubsectionComCocomHopfAlgebras}

Again, let $\mathbbm{k}$ be a field.
Denote the category $\mathcal P_{V_\mathbf{CommCocHopf}}(\mathbbm{k}, \Omega)$, where $$V_\mathbf{CommCocHopf} := V_\mathbf{CocHopf} \cup \lbrace 
xy-yx\rbrace,$$
by $\mathcal P_{\mathbf{CommCocHopf}}(\mathbbm{k})$.
Then $1$ is a commutative cocommutative Hopf monoid in $\mathcal P_{\mathbf{CommCocHopf}}(\mathbbm{k})$.
Using the commutativity,
every monomial $f \in \mathcal P_{\mathbf{CommCocHopf}}(\mathbbm{k})$ 
can be presented in the following form:
\begin{equation}\label{EqCommCocHopfMonomial}f(x_1, \ldots, x_m) = \varepsilon(x_{i_1}) \ldots \varepsilon(x_{i_s}) \bigotimes\limits_{k=1}^n 
\prod\limits_{r=1}^m \prod\limits_{j=1}^{p_{kr}} S^{\alpha_{kr}} x_{r(\beta_{krj})}\end{equation}
where $p_{kr} \in \mathbb Z_+$, $\alpha_{kr} \in \lbrace 0,1 \rbrace$, $i_1 < \ldots < i_s$ and $$(\beta_{1r1}, \beta_{1r2}, \ldots, \beta_{2r1}, \ldots, \beta_{n,r, p_{nr}})
=\left(1, \ldots, \sum_{k=1}^n p_{kr} \right)$$ for every $1\leqslant r \leqslant m$,
$$\lbrace  x_{i_1}, \ldots, x_{i_s}  \rbrace = \left\lbrace r \left| 1\leqslant r \leqslant m,\  \sum_{k=1}^n p_{kr} = 0\right.\right\rbrace,$$
$x_{r(1)}:= x_r$ if $\sum\limits_{k=1}^n p_{kr} = 1$.

Now we define the linear map
$\psi_{m,n} \colon \mathbbm{k}\mathcal F_{\mathbf{Ab}} (x_1, \dots, x_m)^{{}\otimes n} \to \mathcal P_{\mathbf{CommCocHopf}}(\mathbbm{k})(m,n)$ where $m,n \in\mathbb Z_+$ and $\mathcal F_{\mathbf{Ab}} (X)$ is the free abelian group a set $X$ written in a multiplicative form.
The standard basis in $\mathbbm{k}\mathcal F_{\mathbf{Ab}} (x_1, \ldots, x_m)^{{}\otimes n}$ consists of elements
 $g=\bigotimes\limits_{k=1}^n 
 \prod\limits_{r=1}^m x_r^{s_{kr}}$
  where $s_{kr} \in \mathbb Z$.
By the definition, $\psi_{m,n}(g):= f$ where $f$ is defined in~\eqref{EqCommCocHopfMonomial} above  for
$p_{kr} := s_{kr}$ and $\alpha_{kr} := 0$ if  $s_{kr}\geqslant 0$ and $p_{kr} := -s_{kr}$ and $\alpha_{kr} := 1$ if  $s_{kr}< 0$.
The map $\psi_{m,n}$ is extended on $\mathbbm{k}\mathcal F_{\mathbf{Ab}} (x_1, \dots, x_m)^{{}\otimes n}$ by the linearity.
\begin{example} 
	$$\psi_{5,2}\left(x_1^{-2} x_2^{3} \otimes  x_1^2 x_2^{-1}  x_5 \right) = \varepsilon(x_3)\varepsilon(x_4)\,
	 (Sx_{1(1)}) \left(Sx_{1(2)}\right) x_{2(1)} x_{2(2)} x_{2(3)} \otimes
	 x_{1(3)} x_{1(4)} \left( Sx_{2(4)}\right) x_5.$$
\end{example}

\begin{theorem}
	For every $m,n\in \mathbb Z_+$ the map $\psi_{m,n}$ is a linear bijection. Moreover, if $\mathcal F_{\mathbf{Ab}} ( x_1, x_2, \ldots )$ is the free abelian group
	of the countable rank, then the (multilinear) polynomial identities with coefficients in~$\mathbbm k$ of $\mathbbm{k}\mathcal F_{\mathbf{Ab}} ( x_1, x_2, \ldots)$ as a Hopf algebra  are generated by the identities
	$xy-yx\equiv 0$ of commutativity and
	 $x_{(1)}\otimes x_{(2)} - x_{(2)}\otimes x_{(1)}\equiv 0$ of cocommutativity.
\end{theorem}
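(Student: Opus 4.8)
The plan is to mimic the proof of the preceding theorem on cocommutative Hopf algebras, replacing the free group $\mathcal{F}$ by the free abelian group $\mathcal{F}_{\mathbf{Ab}}$ and the bijection $\varphi_{m,n}$ by $\psi_{m,n}$. First I would establish surjectivity of $\psi_{m,n}$. The normal-form reduction carried out just before the theorem statement shows that every monomial $f \in \mathcal{P}_{\mathbf{CommCocHopf}}(\mathbbm{k})(m,n)$ can be rewritten in the shape \eqref{EqCommCocHopfMonomial}: using the compatibility relations and the antipode identities from $V_{\mathbf{Hopf}}$ one pushes $\mu$, $S$, $u$ toward the codomain and $\Delta$, $\varepsilon$ toward the domain, commutativity lets one collect all copies of a fixed variable $x_r$ in a fixed tensor slot $k$ into a single power $x_r^{\pm p_{kr}}$, and the identities $h_{(1)}Sh_{(2)} = \varepsilon(h)$ cancel adjacent $x_{r(\cdot)}$ and $S x_{r(\cdot)}$. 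Such a normal form is precisely $\psi_{m,n}(g)$ for the basis element $g = \bigotimes_{k=1}^n \prod_{r=1}^m x_r^{s_{kr}}$, so every monomial lies in the image of $\psi_{m,n}$, and since the monomials span, $\psi_{m,n}$ is onto.

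Next I would prove injectivity by evaluating in the actual group Hopf algebra. Composing with $\mathcal{E}_{\mathbbm{k}\mathcal{F}_{\mathbf{Ab}}}$ and applying the resulting linear map $(\mathbbm{k}\mathcal{F}_{\mathbf{Ab}})^{\otimes m} \to (\mathbbm{k}\mathcal{F}_{\mathbf{Ab}})^{\otimes n}$ to the vector $x_1 \otimes \dots \otimes x_m$ built from the generators, one uses that in $\mathbbm{k}\mathcal{F}_{\mathbf{Ab}}$ one has $\Delta x_r = x_r \otimes x_r$, $\varepsilon(x_r)=1$ and $S x_r = x_r^{-1}$. Hence every Sweedler component $x_{r(\beta)}$ evaluates back to $x_r$, each factor $S^{\alpha_{kr}} x_{r(\beta)}$ becomes $x_r^{(-1)^{\alpha_{kr}}}$, and the $k$-th slot of \eqref{EqCommCocHopfMonomial} collapses to $\prod_r x_r^{s_{kr}}$, so the whole expression evaluates to the group element $g$ itself. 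Distinct basis elements $g$ (distinct exponent matrices $(s_{kr})$) give distinct, hence linearly independent, elements of $(\mathbbm{k}\mathcal{F}_{\mathbf{Ab}})^{\otimes n}$, so $\mathcal{E}_{\mathbbm{k}\mathcal{F}_{\mathbf{Ab}}} \circ \psi_{m,n}$ is injective. This forces $\psi_{m,n}$ to be injective, and together with the previous paragraph shows each $\psi_{m,n}$ is a linear bijection.

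Finally, for the basis statement I would argue exactly as in the cocommutative case. The evaluation above shows more: the restriction of $\mathcal{E}_{\mathbbm{k}\mathcal{F}_{\mathbf{Ab}}}$ to each hom-set $\mathcal{P}_{\mathbf{CommCocHopf}}(\mathbbm{k})(m,n)$ is injective, i.e. the functor $\mathcal{E}_{\mathbbm{k}\mathcal{F}_{\mathbf{Ab}}} \colon \mathcal{P}_{\mathbf{CommCocHopf}}(\mathbbm{k}) \to \mathbf{Vect}_{\mathbbm{k}}$ is faithful. Thus any polynomial identity $f \equiv 0$ holding in $\mathbbm{k}\mathcal{F}_{\mathbf{Ab}}(x_1, x_2, \ldots)$ already maps to $0$ in $\mathcal{P}_{\mathbf{CommCocHopf}}(\mathbbm{k})$, i.e. it follows from $V_{\mathbf{CommCocHopf}}$. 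Since $V_{\mathbf{CommCocHopf}} = V_{\mathbf{Hopf}} \cup \lbrace x_{(1)}\otimes x_{(2)} - x_{(2)}\otimes x_{(1)}\rbrace \cup \lbrace xy - yx\rbrace$ and the axioms $V_{\mathbf{Hopf}}$ hold in every Hopf algebra, the generating identities beyond the Hopf structure are precisely cocommutativity and commutativity.

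I expect the main obstacle to be the surjectivity step, namely verifying that the normal form \eqref{EqCommCocHopfMonomial} is genuinely attainable for every monomial and that the combinatorial bookkeeping of the indices $p_{kr}$, $\alpha_{kr}$, $\beta_{krj}$ stays consistent under the rewriting; injectivity and the basis statement then follow formally from evaluation in $\mathbbm{k}\mathcal{F}_{\mathbf{Ab}}$. A subtlety worth isolating is that commutativity is truly needed, beyond what cocommutativity already provides, in order to merge different powers of the same generator within a single tensor slot; this is exactly the passage from $\mathcal{F}$ to $\mathcal{F}_{\mathbf{Ab}}$ and the reason the identity $xy - yx \equiv 0$ must appear among the generators.
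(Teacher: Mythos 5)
Your proposal is correct and follows essentially the same route as the paper's proof: surjectivity from the normal form \eqref{EqCommCocHopfMonomial} established before the theorem, injectivity by substituting the generators of $\mathcal F_{\mathbf{Ab}}(x_1,x_2,\ldots)$ so that $\mathcal E_{\mathbbm{k}\mathcal F_{\mathbf{Ab}}}\psi_{m,n}$ sends distinct basis elements $g$ to linearly independent group-like tensors, and the basis statement from the resulting faithfulness of $\mathcal E_{\mathbbm{k}\mathcal F_{\mathbf{Ab}}}$ on each hom-set of $\mathcal P_{\mathbf{CommCocHopf}}(\mathbbm{k})$. Your closing remarks on the role of commutativity and the bookkeeping in the normal form are a useful elaboration of what the paper leaves implicit, but they do not change the argument.
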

\begin{proof}
	The surjectivity of $\psi_{m,n}$ follows from the remarks made above.
	Replacing every variable $x_j$ in an element of $\mathcal P_{\mathbf{CommCocHopf}}(\mathbbm{k})(m,n)$
	with the generator $x_j$ of $\mathcal F_{\mathbf{Ab}} ( x_1, x_2, \ldots )$,
	we see that the images of basis elements $g$
	under the composition $\mathcal E_{\mathcal F_{\mathbf{Ab}} ( x_1, x_2, \ldots )} \psi_{m,n}$
	are linearly independent, which implies that $\psi_{m,n}$ is injective.
	
	If there exists a polynomial identity of $\mathbbm{k}\mathcal F ( x_1, x_2, \ldots)$
	that does not follow from the commutativity and cocommutativity,
	then  $\mathcal E_{\mathcal F_{\mathbf{Ab}} ( x_1, x_2, \ldots )} f$ is
	the zero linear map
	for some $f \in \mathcal P_{\mathbf{CommCocHopf}}(\mathbbm{k})(m,n)$
	and $m,n\in \mathbb Z_+$. The substitution made above implies $f=0$.
\end{proof}	

\begin{remark}
	The linear bijections $\psi_{m,n}$ provide a description of the category $\mathcal P_{\mathbf{CommCocHopf}}(\mathbbm{k})$.
\end{remark}

\subsection{Group Hopf algebra of the cyclic group of order $2$}
Let $C_2 = \lbrace 1, c \rbrace$, $c^2 =1$, be the cyclic group of order $2$ and let $\mathbbm{k}$ be a field, $\mathop\mathrm{char}\mathbbm{k} \ne 2$.
Then the following polynomial identities hold in the group algebra $\mathbbm kC_2$ of $C_2$:
\begin{equation}\label{EqCyclicOrderTwo}
\begin{split}
x_{(1)} \otimes x_{(2)} \equiv x_{(2)} \otimes x_{(1)},\\
xy \equiv yx,\\
x_{(1)}x_{(2)}\equiv \varepsilon(x)1,\\
\bigl( x_{(1)}-\varepsilon(x_{(1)}) 1 \bigr) \otimes \bigl( x_{(2)}-\varepsilon(x_{(2)})1 \bigr)
\otimes \bigl( y-\varepsilon(y)1 \bigr)\\ \equiv \bigl( x-\varepsilon(x)1 \bigr) \otimes \bigl( y_{(1)}-\varepsilon(y_{(1)})1 \bigr) 
\otimes \bigl( y_{(2)}-\varepsilon(y_{(2)})1 \bigr).
\end{split}
\end{equation}
(It is sufficient to substitute basis elements.)
Below we show that all polynomial identities in $\mathbbm kC_2$ follow from~\eqref{EqCyclicOrderTwo}.

Note that~\eqref{EqCyclicOrderTwo} implies
 \begin{equation}\label{EqCyclicOrderTwoSquare}\begin{split}
 \bigl( x_{(1)}-\varepsilon(x_{(1)}) 1 \bigr)\bigl( x_{(2)}-\varepsilon(x_{(2)})1 \bigr)
 \equiv -2 \bigl( x-\varepsilon(x)1 \bigr), \qquad
 Sx\equiv x.
 \end{split}
\end{equation}
Let $V_1$ be the union of $V_\mathbf{Hopf}$, defined in~\eqref{EqVHopf} above,
and the polynomials corresponding to~\eqref{EqCyclicOrderTwo}.
Consider the presentation~\eqref{EqCommCocHopfMonomial}.
Using $x_r = \varepsilon(x_r)1 + (x_r-\varepsilon(x_r)1)$, \eqref{EqCyclicOrderTwo} and \eqref{EqCyclicOrderTwoSquare},
every polynomial from $\mathcal P_{V_1}(\mathbbm{k}, \Omega)(m,n)$, where $m,n\in\mathbb Z_+$,
can be written as a linear combination of polynomials
\begin{equation}\label{EqCyclicOrderTwoMonomial}\begin{split}f(x_1, \ldots, x_m) = \varepsilon(x_{i_1}) \ldots \varepsilon(x_{i_s})
\,1 \otimes \dots \otimes 1 \otimes(x_{j_1(1)}-\varepsilon(x_{j_1(1)})1) \left(\prod\limits_{k=2}^t (x_{j_k}-\varepsilon(x_{j_k})1)\right) \\
\otimes 1 \otimes \ldots \otimes 1 \otimes (x_{j_1(2)}-\varepsilon(x_{j_1(2)})1) \\
\dots \\
\otimes 1 \otimes \ldots \otimes 1 \otimes (x_{j_1(q)}-\varepsilon(x_{j_1(q)})1) \\ \otimes 1 \otimes \dots \otimes 1
\end{split}
\end{equation} for $t\geqslant 1$, $q\geqslant 1$
and \begin{equation}\label{EqCyclicOrderTwoMonomialTriv}f(x_1, \ldots, x_m) = \varepsilon(x_1) \ldots \varepsilon(x_n)
\,1 \otimes \dots \otimes 1\end{equation} for $t=0$ where
$i_1 < \ldots < i_s$, $j_1 < \ldots < j_t$, $$\lbrace i_1, \ldots, i_s \rbrace \sqcup 
\lbrace j_1, \ldots, j_t \rbrace = \lbrace 1, \ldots, n \rbrace.$$

\begin{theorem} Let $\mathbbm{k}$ be a field, $\mathop\mathrm{char}\mathbbm{k} \ne 2$.
	Monomials~\eqref{EqCyclicOrderTwoMonomial} and~\eqref{EqCyclicOrderTwoMonomialTriv}
	are linearly independent modulo polynomial identities in $\mathbbm kC_2$ (as a Hopf algebra).
	As a consequence, (multilinear) polynomial identities with coefficients in~$\mathbbm k$ of $\mathbbm kC_2$ 
	are generated by~\eqref{EqCyclicOrderTwo}.
	Moreover, $c_{m,n}(\mathbbm kC_2) = 2^{m+n} - 2^m - 2^n +2$ for all $m,n\in\mathbb Z_+$. 
\end{theorem}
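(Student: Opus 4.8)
The plan is to use the normal form obtained immediately before the statement as a spanning set for each hom-space $\mathcal P_{V_1}(\mathbbm k,\Omega)(m,n)$ and to prove that $\mathcal E_{\mathbbm kC_2}$ sends these normal-form monomials to linearly independent operators $(\mathbbm kC_2)^{\otimes m}\to(\mathbbm kC_2)^{\otimes n}$; the generation statement and the codimension formula will then be immediate consequences. Write $z:=c-1$ for the generator of the augmentation ideal, so that $\varepsilon(z)=0$ and $z^2=-2z$, whence $z^k=(-2)^{k-1}z$ for $k\geqslant 1$ (the instance of~\eqref{EqCyclicOrderTwoSquare} at $x=c$); note also that $\{1,z\}$ is a basis of $\mathbbm kC_2$. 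I index the monomials~\eqref{EqCyclicOrderTwoMonomial} by a pair $(J,K)$, where $J\subseteq\{1,\dots,m\}$ is the nonempty set of active variables (the distinguished variable being $\min J$) and $K\subseteq\{1,\dots,n\}$ is the nonempty set of output factors carrying the pieces of the comultiplication of $x_{\min J}$ (the clustering factor being $\min K$); the single monomial~\eqref{EqCyclicOrderTwoMonomialTriv} is the remaining case $J=\varnothing$. Denote by $f_{J,K}$ the corresponding monomial.

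The heart of the argument is an evaluation on group-element inputs. For $J'\subseteq\{1,\dots,m\}$ let $v_{J'}\in(\mathbbm kC_2)^{\otimes m}$ be the elementary tensor whose $j$th factor is $c$ when $j\in J'$ and $1$ otherwise, and for $K\subseteq\{1,\dots,n\}$ let $E_K\in(\mathbbm kC_2)^{\otimes n}$ be the elementary tensor carrying $z$ in the factors indexed by $K$ and $1$ elsewhere; the $E_K$ are distinct members of the basis $\{1,z\}^{\otimes n}$, hence linearly independent. Since $\varepsilon(1)=\varepsilon(c)=1$ while $x-\varepsilon(x)1$ equals $0$ at $x=1$ and $z$ at $x=c$, and since $c$ is grouplike (so each of its comultiplication pieces is again $c$), I expect the direct computation to yield
\begin{equation*}
\mathcal E_{\mathbbm kC_2}(f_{J,K})(v_{J'})=
\begin{cases}
(-2)^{|J|-1}\,E_K & \text{if } J\subseteq J',\\
0 & \text{otherwise},
\end{cases}
\end{equation*}
the scalar $(-2)^{|J|-1}$ arising because the clustering factor contributes $z^{|J|}$ while every other factor of $K$ contributes a single $z$; the monomial~\eqref{EqCyclicOrderTwoMonomialTriv} sends every $v_{J'}$ to $E_\varnothing$.

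Linear independence then follows by a triangular descent. Evaluating any vanishing combination at $v_\varnothing$ leaves only the coefficient of~\eqref{EqCyclicOrderTwoMonomialTriv} (every $f_{J,K}$ with $J\neq\varnothing$ vanishes there), forcing that coefficient to be $0$. Inducting on $|J'|$ and evaluating at $v_{J'}$, the terms with $J\subseteq J'$ and $J\neq J'$ are already zero by the inductive hypothesis, so only $J=J'$ survives, and the linear independence of the $E_K$ forces every $a_{J',K}=0$. Hence $\mathcal E_{\mathbbm kC_2}$ is injective on each $\mathcal P_{V_1}(\mathbbm k,\Omega)(m,n)$. Combined with the spanning property this shows at once that every polynomial identity of $\mathbbm kC_2$ follows from $V_\mathbf{Hopf}$ together with~\eqref{EqCyclicOrderTwo}, and that the codimension equals the number of normal-form monomials:
\begin{equation*}
c_{m,n}(\mathbbm kC_2)=(2^m-1)(2^n-1)+1=2^{m+n}-2^m-2^n+2 ,
\end{equation*}
there being $2^m-1$ admissible sets $J$, then $2^n-1$ admissible sets $K$, plus the one monomial~\eqref{EqCyclicOrderTwoMonomialTriv}. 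The only genuinely delicate point is the bookkeeping behind the displayed evaluation formula—verifying that a factor not meeting $J'$ kills the whole tensor and that the clustering factor produces exactly $(-2)^{|J|-1}z$—after which independence, generation, and the count are all formal.
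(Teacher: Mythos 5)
Your proposal is correct and follows essentially the same route as the paper: both arguments substitute the basis elements $1$ and $c$ for the variables, use the vanishing of $x-\varepsilon(x)1$ at $x=1$ (and the relation $z^k=(-2)^{k-1}z$ for $z=c-1$, which is nonzero since $\operatorname{char}\mathbbm{k}\ne 2$) to obtain a triangular evaluation pattern, and conclude via the linear independence of the output tensors $E_K$; your induction on $|J'|$ is just a cleaner packaging of the paper's ``choose the monomial with least $t$'' step. The generation statement and the count $1+(2^m-1)(2^n-1)$ then follow exactly as in the paper.
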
	
\begin{proof} Suppose that for some $m,n\in\mathbb Z_+$ a linear combination
	of monomials~\eqref{EqCyclicOrderTwoMonomial} and~\eqref{EqCyclicOrderTwoMonomialTriv}
	is a polynomial identity. Among all the monomials that occur in this linear combination
	with non-zero coefficients, choose the monomial $f_0$ with the least $t=: t_0$
	and substitute $x_{i_k}=1$ for $1\leqslant k \leqslant s$, $x_{j_\ell}=c$ for $1\leqslant \ell \leqslant t_0$. Then the monomials with $t > t_0$ or different $j_\ell$ will be zero. The values of monomials with $1$ on different places in the tensor product~\eqref{EqCyclicOrderTwoMonomial}  will be linearly independent with the value of $f_0$.
	Thus $f_0$ must occur in the polynomial identity with the zero coefficient, and we get a contradiction.
	Hence monomials~\eqref{EqCyclicOrderTwoMonomial} and~\eqref{EqCyclicOrderTwoMonomialTriv}
	are linearly independent modulo polynomial identities in $\mathbbm kC_2$. Therefore, polynomial identities of $\mathbbm kC_2$
	are generated by~\eqref{EqCyclicOrderTwo}.
	
	The codimension $c_{m,n}(\mathbbm kC_2)$ equals the number of monomials~\eqref{EqCyclicOrderTwoMonomial} and~\eqref{EqCyclicOrderTwoMonomialTriv}. Thus
	$$c_{m,n}(\mathbbm kC_2) = 1 + \sum\limits_{t=1}^m \binom{m}{t} \sum\limits_{k=1}^n \binom{n}{k}
	= 1 + (2^n-1)(2^m-1)= 2^{m+n} - 2^m - 2^n +2.$$
\end{proof}	

\subsection{A Yetter~--- Drinfel'd module of dimension $2$ as an $\lbrace \sigma \rbrace$-magma in $\mathbf{Vect}_\mathbbm{k}$}
\label{SubsectionYDdim2sigmaVect}

Let $H$ be a Hopf algebra over a field $\mathbbm k$ with an invertible antipode $S$. Denote by ${}_H^H\mathcal{YD}$
the category of \textit{left Yetter~--- Drinfel'd modules} (or \textit{${}_H^H\mathcal{YD}$-modules} for short), i.e. left $H$-modules and $H$-comodules $M$ such that
the $H$-action and the $H$-coaction $\delta \colon M \to H \otimes M$ 
satisfy the following compatibility condition:
$$\delta(hm)=h_{(1)}m_{(-1)}Sh_{(3)} \otimes h_{(2)}m_{(0)} \text{ for every } m\in M \text{ and } h\in H.$$

The category ${}_H^H\mathcal{YD}$ is braided monoidal where the monoidal product of  ${}_H^H\mathcal{YD}$-modules
$M$ and $N$ is their usual tensor product $M\otimes N$ over $\mathbbm k$ with the induced structures of a left $H$-module and a left $H$-comodule. The braiding $\sigma_{M,N} \colon M\otimes N \to N \otimes M$ is defined
by the formula $$\sigma_{M,N}(m\otimes n) := m_{(-1)}n \otimes m_{(0)}\text{ for }m\in M,\ n\in N.$$

Every Yetter~--- Drinfel'd module (or, more generally, a braided vector space) $M$  can be considered an $\lbrace \sigma \rbrace$-algebra over $\mathbbm k$
where $\sigma_M \colon M \otimes M \to M \otimes M$ is just the braiding $\sigma_{M,M}$.

Let $\mathop\mathrm{char} \mathbbm k \ne 2$ and $M := \langle a,b \rangle_{\mathbbm k}$.
Define on $M$ the structure of a Yetter~--- Drinfel'd module over the Hopf algebra $\mathbbm kC_2$
by
$\delta a := 1\otimes a $, $\delta b := c\otimes b$,
$ca:= -a$, $cb=b$.

Below we calculate $c_{m,n}(M)$ where $M$ is considered a $\lbrace \sigma \rbrace$-algebra over $\mathbbm k$.
In addition to the Yetter~--- Drinfel'd braiding $\sigma_M$, which is now an operation,
in $\mathcal P(\lbrace \sigma\rbrace, \mathbbm k)$ there are 
ordinary swaps $\tau_{m,n} \colon m+n \to n+m$ where  
$\mathcal{E}_M \left(\tau_{m,n}\right)(u\otimes v) := v \otimes u$ for all $u\in M^{{}\otimes m}$
and $v\in M^{{}\otimes n}$, $m,n\in\mathbb Z_+$.

Again, define the embedding $\theta \colon \mathbbm{k} S_n \hookrightarrow \mathcal P_{V_\mathrm{symm}}(\lbrace \sigma \rbrace, \mathbbm{k})(n,n)$ in the way that 
$$\left(\mathcal E_M\theta(\rho)\right)(a_1 \otimes a_2 \otimes \dots \otimes a_n) = a_{\rho^{-1}(1)} \otimes a_{\rho^{-1}(2)} \otimes \dots \otimes a_{\rho^{-1}(n)}
\text{ for all } a_1, \ldots, a_n \in M\text{ and } \rho\in S_n.$$

Let $p^{(n)}_{21}:=\frac{1}{2}(\id_2 - \tau_{1,1}\sigma)\otimes \id_{n-2}$ and  $p^{(n)}_{ij}:=\theta(\rho)p^{(n)}_{21} \theta(\rho)^{-1}$ where $\rho\in S_n$ such that $\rho(2)=i$, $\rho(1)=j$,
$ 1 \leqslant i,j \leqslant n$,
$i\ne j$, $n\in\mathbb N$, $n\geqslant 2$. Then \begin{equation}\label{EqYDdim2SigmaVectPij}\left(\mathcal E_M p^{(n)}_{ij}\right)(u_1 \otimes u_2 \otimes \dots \otimes u_n)
=\left\lbrace\begin{array}{cl}
u_1 \otimes u_2 \otimes \dots \otimes u_n & \text{if }
u_i = a\text{ and } u_j = b, \\
0 & \text{otherwise}
\end{array}
\right.\end{equation}
for every $u_1,\dots,u_n\in \lbrace a,b \rbrace.$

For every $\varnothing \subsetneqq I \subsetneqq \lbrace 1,2,\dots,n\rbrace$
define $p^{(n)}_I := \prod\limits_{\substack{i \in I,\\ j\in \bar I}}
p^{(n)}_{ij}$ where $\bar I := \lbrace j \mid 1\leqslant j \leqslant n,\ j\notin I \rbrace$.
Let $$p^{(n)}_\varnothing :=
\id_n - \sum\limits_{\varnothing \subsetneqq I \subsetneqq \lbrace 1,2,\dots,n\rbrace} p^{(n)}_I.$$

\begin{theorem} Let  $\mathop\mathrm{char} \mathbbm k \ne 2$ and let $M = \langle a,b \rangle_{\mathbbm k}$
	be the Yetter~--- Drinfel'd module over $\mathbbm kC_2$ defined above. Then polynomial identities with coefficients in~$\mathbbm k$ of $M$
as an $\lbrace \sigma \rbrace$-magma in $\mathbf{Vect}_\mathbbm{k}$
	  are generated by the set
	\begin{equation*}\begin{split}
V_2 := V_\mathrm{symm} \cup \left\lbrace
p^{(n)}_{i_1 j_1}p^{(n)}_{i_2 j_2}- p^{(n)}_{i_2 j_2}p^{(n)}_{i_1 j_1}, \quad
p^{(n)}_{i_1 j_1}p^{(n)}_{i_2 j_2}- p^{(n)}_{i_1 j_2}p^{(n)}_{i_2 j_1},\right.
\\
p^{(n)}_{i_1 j_1} p^{(n)}_{i_2 j_2} p^{(n)}_{i_1 j_2}-
p^{(n)}_{i_1 j_1} p^{(n)}_{i_2 j_2},
\quad
p^{(n)}_{i_1 j_1}p^{(n)}_{i_2 i_1},\quad
\left(p^{(n)}_{i_1 j_1}\right)^2-p^{(n)}_{i_1 j_1},\\
p^{(n)}_{i_1 j_1}  p^{(n)}_{i_2 j_2}\theta\bigl((i_1 i_2)\bigr)-
p^{(n)}_{i_1 j_1} p^{(n)}_{i_2 j_2},\quad
p^{(n)}_{i_1 j_1}  p^{(n)}_{i_2 j_2}\theta\bigl((j_1 j_2)\bigr)-
p^{(n)}_{i_1 j_1} p^{(n)}_{i_2 j_2}
\\
\mid 1\leqslant i_1,j_1,i_2,j_2 \leqslant n,\ n=2,3,4;\text{ in all }p^{(n)}_{ij} \text{ we have } i\ne j
\rbrace \\
{} \cup \left\lbrace\left. p^{(n)}_\varnothing \theta(\rho) - p^{(n)}_\varnothing  \right| n\geqslant 2,\ \rho\in S_n \right\rbrace.
\end{split}\end{equation*}
Moreover  $c_{m,n}(M) = 0$ for $m\ne n$, $c_{0,0}(M)=c_{1,1}(M)=1$ and for $n\geqslant 2$
we have $$c_{n,n}(M) =  \binom{2n}{n}-1 \sim \frac{ 4^n}{\sqrt{\pi n}}
\text{ as }n\to\infty.$$
\end{theorem}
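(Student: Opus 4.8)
The plan is to identify the image algebra $\mathcal A_n := \mathcal E_M\bigl(\mathcal P(\lbrace\sigma\rbrace,\mathbbm k)(n,n)\bigr)$ inside $\End_{\mathbbm k}(M^{\otimes n})$ explicitly and to read off both the codimensions and the completeness of $V_2$ from its structure. A direct computation on the basis $\lbrace a,b\rbrace$ gives $\sigma_M(a\otimes a)=a\otimes a$, $\sigma_M(a\otimes b)=b\otimes a$, $\sigma_M(b\otimes a)=-a\otimes b$ and $\sigma_M(b\otimes b)=b\otimes b$, so $\mathcal E_M(\tau_{1,1}\sigma)$ negates $b\otimes a$ and fixes the other three basis tensors; hence $\mathcal E_M p^{(n)}_{21}$ is the projection onto the span of the tensors with $b$ in slot $1$ and $a$ in slot $2$, which after conjugation by permutations yields~\eqref{EqYDdim2SigmaVectPij}. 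Writing $e_I\in M^{\otimes n}$ for the basis tensor carrying $a$ in the slots of $I\subseteq\lbrace1,\dots,n\rbrace$ and $b$ elsewhere, it follows that $\mathcal E_M p^{(n)}_I$ is the rank-one projection onto $\mathbbm k e_I$ for $\varnothing\subsetneqq I\subsetneqq\lbrace1,\dots,n\rbrace$, while $\mathcal E_M p^{(n)}_\varnothing$ is the projection onto $\langle a^{\otimes n},\,b^{\otimes n}\rangle_{\mathbbm k}$.

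For the codimensions I would grade $M^{\otimes n}=\bigoplus_{j=0}^n W_j$ by the number $j$ of $a$-slots, where $W_j=\langle e_I\mid |I|=j\rangle_{\mathbbm k}$ has dimension $\binom nj$. Every generator $\mathcal E_M(\sigma)$ and $\mathcal E_M(\tau_{1,1})$ preserves the number of $a$'s, so $\mathcal A_n\subseteq\bigoplus_{j=0}^n\End(W_j)$. For $1\leqslant j\leqslant n-1$ the elements $\mathcal E_M\bigl(p^{(n)}_I\,\theta(\rho)\,p^{(n)}_{I'}\bigr)$ with $|I|=|I'|=j$ and $\rho(I')=I$ realize all matrix units of $\End(W_j)$, so this block is full; on the extreme block every generator fixes both $a^{\otimes n}$ and $b^{\otimes n}$, so each element of $\mathcal A_n$ acts there as a single scalar, contributing one copy of $\mathbbm k$ rather than two. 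Thus $\mathcal A_n\cong\bigoplus_{j=1}^{n-1}\End(W_j)\oplus\mathbbm k$, and Vandermonde's identity $\sum_{j=0}^n\binom nj^2=\binom{2n}{n}$ gives
\[
c_{n,n}(M)=\dim\mathcal A_n=\sum_{j=1}^{n-1}\binom nj^2+1=\binom{2n}{n}-1 .
\]
The equalities $c_{m,n}(M)=0$ for $m\neq n$ hold because $\sigma$ has equal arity and coarity, so $\mathcal M(\lbrace\sigma\rbrace)(m,n)=\varnothing$; the cases $n=0,1$ give $\mathcal A_n=\mathbbm k\,\id$, and the stated asymptotics follow from Stirling's formula.

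For the generation statement I would first check, by evaluating $\mathcal E_M$ on the basis tensors $e_I$ of $M^{\otimes n}$, that every element of $V_2$ is an identity of $M$ (here $V_\mathrm{symm}$ holds since the flip of $\mathbf{Vect}_{\mathbbm k}$ is an involution, and the family $p^{(n)}_\varnothing\theta(\rho)\equiv p^{(n)}_\varnothing$ is immediate from the description of $\mathcal E_M p^{(n)}_\varnothing$ above), so $\mathcal E_M$ factors through $\mathcal P_{V_2}(\lbrace\sigma\rbrace,\mathbbm k)$. The crux is the reverse dimension bound $\dim\mathcal P_{V_2}(\lbrace\sigma\rbrace,\mathbbm k)(n,n)\leqslant\binom{2n}{n}-1$, for which I would use that, modulo $V_2$, the $p^{(n)}_I$ for proper nonempty $I$ together with $p^{(n)}_\varnothing$ form a complete system of orthogonal idempotents: orthogonality $p^{(n)}_Ip^{(n)}_{I'}\equiv0$ for $I\neq I'$ comes from the annihilation relation $p^{(n)}_{i_1j_1}p^{(n)}_{i_2i_1}\equiv0$ applied to a slot on which $I$ and $I'$ disagree, idempotency is explicit in $V_2$, and completeness is the defining relation of $p^{(n)}_\varnothing$. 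Using the conjugation identity $\theta(\rho)p^{(n)}_{ij}\equiv p^{(n)}_{\rho(i)\rho(j)}\theta(\rho)$, which holds already in $\mathcal P(\lbrace\sigma\rbrace,\mathbbm k)$, one pushes all permutations to the left of any word, so each Peirce corner $p^{(n)}_X\,\mathcal P_{V_2}(\lbrace\sigma\rbrace,\mathbbm k)(n,n)\,p^{(n)}_Y$ is spanned by the single element $p^{(n)}_X\theta(\rho)p^{(n)}_Y$; this vanishes unless $\rho(Y)=X$, it equals $p^{(n)}_\varnothing$ in the extreme corner by the relation $p^{(n)}_\varnothing\theta(\rho)\equiv p^{(n)}_\varnothing$, and the absorption relations $p^{(n)}_{i_1j_1}p^{(n)}_{i_2j_2}\theta\bigl((i_1i_2)\bigr)\equiv p^{(n)}_{i_1j_1}p^{(n)}_{i_2j_2}$ (and its $j$-analogue) make it independent of the choice of $\rho$ with $\rho(Y)=X$. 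Summing over the corners gives exactly $\sum_{j=1}^{n-1}\binom nj^2+1=\binom{2n}{n}-1$ spanning elements, whose $\mathcal E_M$-images are the linearly independent matrix units and the extreme projection; hence $\mathcal E_M\colon\mathcal P_{V_2}(\lbrace\sigma\rbrace,\mathbbm k)(n,n)\to\mathcal A_n$ is bijective, i.e. faithful, and $V_2$ generates all polynomial identities of $M$.

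The main obstacle I anticipate is the bookkeeping in this last paragraph for arbitrary $n$: the $p^{(n)}_{ij}$-relations are imposed only for $n\leqslant 4$, so one must verify that every manipulation of products of two projections and of an absorbed transposition involves at most four slots and is therefore obtained from the relations for $n\leqslant 4$ monoidally multiplied by $\id_r$ (as in Remark~\ref{RemarkTau11}), while only the genuinely global collapse of permutations on $\langle a^{\otimes n},b^{\otimes n}\rangle_{\mathbbm k}$ needs the separate family $p^{(n)}_\varnothing\theta(\rho)\equiv p^{(n)}_\varnothing$ stated for all $n$.
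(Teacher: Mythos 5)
Your proposal is correct and follows essentially the same route as the paper's proof: identify $\mathcal E_M p^{(n)}_I$ as the rank-one projections onto the basis tensors $e_I$ (and $\mathcal E_M p^{(n)}_\varnothing$ as the projection onto $\langle a^{\otimes n}, b^{\otimes n}\rangle_{\mathbbm k}$), verify $V_2$, and then use the complete orthogonal system of idempotents $p^{(n)}_I$ together with the rewriting $\sigma = \theta\bigl((12)\bigr)\bigl(\id_2 - 2p^{(2)}_{21}\bigr)$ and the conjugation/absorption relations to show that $\mathcal P_{V_2}(\lbrace\sigma\rbrace,\mathbbm k)(n,n)$ is spanned by the $\binom{2n}{n}-1$ elements $p^{(n)}_I\theta(\rho_{I,J})p^{(n)}_J$, whose $\mathcal E_M$-images are linearly independent. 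Your block description $\mathcal A_n\cong\bigoplus_{j=1}^{n-1}\End(W_j)\oplus\mathbbm k$ is only a more structural packaging of the same count, and your closing remark about reducing the $n\leqslant 4$ relations to general $n$ by tensoring with $\id_r$ and conjugating is exactly the step the paper takes.
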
	
\begin{proof}  
By~\eqref{EqYDdim2SigmaVectPij}, for every $\varnothing \subsetneqq I \subsetneqq \lbrace 1,2,\dots,n\rbrace$
\begin{equation}\label{EqYDdim2SigmaVectPI}\left(\mathcal E_M p^{(n)}_I\right)(u_1 \otimes u_2 \otimes \dots \otimes u_n)
=\left\lbrace\begin{array}{cc}
u_1 \otimes u_2 \otimes \dots \otimes u_n & \text{if }
u_i = a\text{ for all } i\in I\\
& \text{and }u_j = b\text{ for all } j\in \bar I, \\
0 & \text{otherwise}
\end{array}
\right.\end{equation}
for every $u_1,\dots,u_n\in \lbrace a,b \rbrace.$
Hence
\begin{equation}\label{EqYDdim2SigmaVectPnothing}\left(\mathcal E_M p^{(n)}_\varnothing\right)(u_1 \otimes u_2 \otimes \dots \otimes u_n)
=\left\lbrace\begin{array}{cc}
u_1 \otimes u_2 \otimes \dots \otimes u_n & \text{if }
u_i = a\text{ for all } 1\leqslant i \leqslant n\\
& \text{or }u_i = b\text{ for all } 
1\leqslant i \leqslant n, \\
0 & \text{otherwise}
\end{array}
\right.\end{equation}
for every $u_1,\dots,u_n\in \lbrace a,b \rbrace.$
By~\eqref{EqYDdim2SigmaVectPij} and~\eqref{EqYDdim2SigmaVectPnothing},
 the elements of the set $V_2$ are indeed polynomial identities in $M$.
 
Tensoring $p^{(m)}_{ij}$ with $\id_{n-m}$
and conjugating them by $\theta(\rho)$ where $\rho\in S_n$, we obtain that the polynomial identities appearing in the set $V_2$
only for $n\leqslant 4$ imply the same polynomial identities for $n > 4$.

Denote the images of morphisms from $\mathcal P_{V_\mathrm{symm}}(\lbrace \sigma\rbrace, \mathbbm k)(n,n)$ in $\mathcal P_{V_2}(\lbrace \sigma\rbrace, \mathbbm k) (n,n)$ by the same symbols.

The identities from $V_2$ imply that $p^{(n)}_I$ for $\varnothing \subsetneqq I \subsetneqq \lbrace 1,2,\dots,n\rbrace$ are orthogonal idempotents. Moreover,
$$p^{(n)}_I p^{(n)}_{ij}= p^{(n)}_{ij} p^{(n)}_I=\left\lbrace\begin{array}{ll}
p^{(n)}_I &\text{if }i\in I\text{ and }j\in \bar I, \\
0 &\text{otherwise.}
\end{array}
\right.
$$
In addition, $\theta(\rho) p^{(n)}_\varnothing \theta(\rho)^{-1} = p^{(n)}_\varnothing$ for all $\rho \in S_n$.

 Therefore, $p^{(n)}_\varnothing$ is a central idempotent in the algebra $\mathcal P_{V_2}(\lbrace \sigma\rbrace, \mathbbm k) (n,n)$ and is orthogonal to the other $p^{(n)}_I$.
 
 Let $I,J \subsetneqq \lbrace 1,2,\dots,n\rbrace$.
 Recall that every permutation $\rho\in S_n$ is a composition of transpositions. 
By the identities from $V_2$, we have
$p^{(n)}_I \theta(\rho)=p^{(n)}_I $ if
$\rho(I)=I$ and $$p^{(n)}_I \theta(\rho) p^{(n)}_J =
p^{(n)}_I \theta(\rho) p^{(n)}_J \theta(\rho)^{-1} \theta(\rho)  =p^{(n)}_I  p^{(n)}_{\rho(J)} \theta(\rho)^{-1}=0$$
if $\rho(J)\ne I$.
Hence
 the vector space $\left\langle\left. p^{(n)}_I \theta(\rho) p^{(n)}_J\right| \rho\in S_n\right\rangle_{\mathbbm k}$ is one-dimensional
if $|I|=|J|$ and zero otherwise. 

For every $I,J \subsetneqq \lbrace 1,2,\dots,n\rbrace$,
$|I|=|J|$,
fix some permutation $\rho_{I,J}\in S_n$ such that
$\rho_{I,J}(J)=I$. Rewriting $\sigma=\theta\bigl((12)\bigr)\left(\id_2-2p^{(2)}_{21}\right)$
and inserting $\id_n = \sum\limits_{I \subsetneqq \lbrace 1,2,\dots,n\rbrace} p^{(n)}_I$, we obtain that
$\mathcal P_{V_2}(\lbrace \sigma\rbrace, \mathbbm k) (n,n)$ is
the linear span of the $V_2$-relative $\lbrace \sigma\rbrace$-polynomials
\begin{equation}\label{EqBasisYDkC2}
p^{(n)}_I \theta(\rho_{I,J})p^{(n)}_J.\end{equation}

By~\eqref{EqYDdim2SigmaVectPI} and~\eqref{EqYDdim2SigmaVectPnothing}, the $V_2$-relative $\lbrace \sigma\rbrace$-polynomials~\eqref{EqBasisYDkC2} form a basis in $\mathcal P_{V_2}(\lbrace \sigma\rbrace, \mathbbm k) (n,n)$, the restriction of $\mathcal E_M$ on $\mathcal P_{V_2}(\lbrace \sigma\rbrace, \mathbbm k) (n,n)$ is an injection,
$V_2$
generates  $\lbrace \sigma\rbrace$-polynomial identities in $M$
with coefficients in~$\mathbbm k$
 and
$$c_{n,n}(M) = \sum\limits_{k=0}^{n-1}  \binom{n}{k}^2= \sum\limits_{k=0}^n  \binom{n}{k}^2 -1 = \binom{2n}{n}-1 \sim \frac{ 4^n}{\sqrt{\pi n}}
\text{ as }n\to\infty.$$
\end{proof}

\subsection{A Yetter~--- Drinfel'd module of dimension $2$ as an $\varnothing$-magma in ${}_{\mathbbm{k}C_2}^{\mathbbm{k}C_2}
\mathcal{YD}$}\label{SubsectionYDdim2VarnothingYD}

In this section we consider the Yetter~--- Drinfel'd module $M$ defined in Section~\ref{SubsectionYDdim2sigmaVect} above, but without swaps from $\mathbf{Vect}_\mathbbm{k}$.
The functor $\mathcal E_M$
now maps $\tau_{k,\ell}$ to the corresponding components of the
${}_{\mathbbm{k}C_2}^{\mathbbm{k}C_2}
\mathcal{YD}$-braiding.

Before treating our particular case, we first describe the category $M(\varnothing)$.

Let $$B_n := \langle \sigma_1, \ldots, \sigma_{n-1} \mid \sigma_i\sigma_j = \sigma_j\sigma_i\text{ for }
|i-j|\geqslant 2\text{ and }  \sigma_i\sigma_{i+1}\sigma_i = \sigma_{i+1}\sigma_i\sigma_{i+1}
\text{ for }1\leqslant i \leqslant n-2 \rangle$$
be the $n$th braid group, $B_0 = B_1 := \lbrace 1 \rbrace$.
The invertibility of the braiding implies that each $M(\varnothing) (n,n)$ is a group too. Below we show that
$M(\varnothing) (n,n) \cong B_n$.

Denote by $\mathcal B$ the braid category, i.e. the braided monoidal category where the set of objects coincides with $\mathbb Z_+$, $\mathcal B(m,n)=\varnothing$ for $m\ne m$ and $\mathcal B(n,n)= B_n$ for every $n\in \mathbb Z_+$.

Define the group homomorphisms $\theta \colon B_n \to \mathcal M(\varnothing) (n,n)$
 by $\theta(\sigma_i):=\id_{i-1}\otimes \tau_{1,1} \otimes \id_{n-i-1}$
for $1\leqslant i \leqslant n-1$, $n\in\mathbb Z_+$.

\begin{proposition}
	The maps $\theta \colon B_n \to M(\varnothing) (n,n)$ defined above are group isomorphisms for every $n\in\mathbb Z_+$.
	Their extensions $\mathbbm{k} B_n \to \mathcal P(\varnothing, \mathbbm k) (n,n)$ by the linearity, which we denote below by the same letter $\theta$, are algebra isomorphisms. The category $M(\varnothing)$ is isomorphic as a braided monoidal category to the braid category $\mathcal B$.
\end{proposition}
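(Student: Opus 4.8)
The plan is to package the homomorphisms $\theta$ into a single braided strict monoidal functor $\Theta\colon \mathcal B \to \mathcal M(\varnothing)$ and then invert it using the universal property of $\mathcal M(\varnothing)$. First I would check that each $\theta$ is a well-defined group homomorphism by verifying the defining relations of $B_n$ on the images $\theta(\sigma_i)=\id_{i-1}\otimes\tau_{1,1}\otimes\id_{n-i-1}$. The far-commutativity relation $\theta(\sigma_i)\theta(\sigma_j)=\theta(\sigma_j)\theta(\sigma_i)$ for $|i-j|\geqslant 2$ follows from bifunctoriality of $\otimes$, since the two factors act on disjoint tensor slots; the braid relation $\theta(\sigma_i)\theta(\sigma_{i+1})\theta(\sigma_i)=\theta(\sigma_{i+1})\theta(\sigma_i)\theta(\sigma_{i+1})$ reduces (after tensoring with identities) to the Yang--Baxter equation for $\tau_{1,1}$, which holds in $\mathcal M(\varnothing)$ because the defining congruence makes it braided strict monoidal and hence forces the hexagon axioms. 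Since $\mathcal B(m,n)=\varnothing$ for $m\ne n$ and the same formula defines $\theta$ for every $n$, these homomorphisms assemble into a functor $\Theta$ that is the identity on objects; the compatibilities $\theta_{m+\ell}(\sigma_i)=\theta_m(\sigma_i)\otimes\id_\ell$ and $\Theta(\tau_{1,1})=\tau_{1,1}$ then show that $\Theta$ is strict monoidal and braided, i.e.\ a morphism in $\mathbf{BMAT}$.

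Next I would record surjectivity. Every morphism of $\mathcal M(\varnothing)$ is a composite of factors $\id_k\otimes\tau^{\pm 1}_{p,q}\otimes\id_\ell$, and each block braiding $\tau_{p,q}$ decomposes via the hexagon axioms into a composite of elementary braidings $\tau_{1,1}$ tensored with identities, all of which lie in the image of $\theta$; together with $\mathcal M(\varnothing)(m,n)=\varnothing$ for $m\ne n$ this makes $\Theta$ full and bijective on objects.

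The main obstacle is injectivity --- that $\theta$ does not collapse distinct braids --- and here I would avoid a combinatorial normal form and instead invoke the universal property. Specializing the adjunction of the Remark to $\Omega=\varnothing$, the right-hand side is a one-element set (there is a unique morphism out of the empty signature), so $\mathbf{BMAT}(\mathcal M(\varnothing),\mathbb A)$ is a singleton for every BMAT $\mathbb A$; thus $\mathcal M(\varnothing)$ is initial in $\mathbf{BMAT}$. Since $\mathcal B$ is itself a BMAT, there is a unique braided strict monoidal functor $F\colon \mathcal M(\varnothing)\to\mathcal B$. The composite $\Theta F$ is a $\mathbf{BMAT}$-endomorphism of $\mathcal M(\varnothing)$, hence equals $\id$ by initiality. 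For the reverse composite I would compute on generators: because $F$ is braided, $F(\tau_{1,1})=\sigma_1\in B_2$, so $F\Theta(\sigma_i)=F(\id_{i-1}\otimes\tau_{1,1}\otimes\id_{n-i-1})=\id_{i-1}\otimes\sigma_1\otimes\id_{n-i-1}=\sigma_i$ in $B_n$; as the $\sigma_i$ generate $B_n$ and $F\Theta$ is the identity on objects, $F\Theta=\id_{\mathcal B}$. Hence $\Theta$ and $F$ are mutually inverse isomorphisms in $\mathbf{BMAT}$, which proves $\mathcal M(\varnothing)\cong\mathcal B$ as braided monoidal categories and, on hom-sets, that each $\theta\colon B_n\to\mathcal M(\varnothing)(n,n)$ is a group isomorphism.

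Finally, for the algebra statement I would note that $\mathcal P(\varnothing,\mathbbm k)(n,n)$ is by definition the free $\mathbbm k$-module on the basis $\mathcal M(\varnothing)(n,n)$ with composition extended bilinearly, which is precisely the group algebra of $\mathcal M(\varnothing)(n,n)$. Consequently the linear extension of the group isomorphism $\theta$ is an isomorphism of $\mathbbm k$-algebras $\mathbbm k B_n\to\mathcal P(\varnothing,\mathbbm k)(n,n)$, completing all three assertions.
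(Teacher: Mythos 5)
Your proposal is correct and follows essentially the same route as the paper: surjectivity by decomposing the block braidings $\tau_{k,\ell}$ into elementary ones $\id_s\otimes\tau_{1,1}\otimes\id_t$, and injectivity by producing the inverse functor $\mathcal M(\varnothing)\to\mathcal B$ from the universal property of $\mathcal M(\varnothing)$ (the paper takes the functor $\mathcal E_1$ associated to the $\varnothing$-magma $1$ in $\mathcal B$ via Proposition~\ref{PropositionOmegaMagmaBSMFunctorsEquivalence}, which is the same functor as your $F$). Your use of the initiality of $\mathcal M(\varnothing)$ in $\mathbf{BMAT}$ to dispose of the composite $\Theta\circ F$, together with the generator check for $F\circ\Theta$, is a slightly more explicit packaging of the paper's bare assertion that $\mathcal E_1=\theta^{-1}$ on each hom-set, but the underlying argument is identical.
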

\begin{proof}
	By the properties of the braiding, all $\tau_{k,\ell}$ are compositions of $\id_s\otimes \tau_{1,1} \otimes \id_t$,
	$s+t+2 = k+\ell$, $s,t\in\mathbb Z_+$, whence $\theta$ is surjective.
	Note that $1$ is an $\varnothing$-magma in $\mathcal B$.
	Consider the braided strong monoidal functor $\mathcal E_1 \colon \mathcal M(\Omega) \to \mathcal B$
	from Proposition~\ref{PropositionOmegaMagmaBSMFunctorsEquivalence}.
	We have $\mathcal E_1 = \theta^{-1}$ on each $\mathcal M(\varnothing) (n,n)$.
	Therefore, the homomorphisms $\theta$ are in fact isomorphisms and the categories $\mathcal M(\varnothing)$ and $\mathcal B$ are isomorphic.	Now extend $\theta$ to algebra homomorphisms $\mathbbm{k} B_n \to \mathcal P(\varnothing, \mathbbm k) (n,n)$
	by the linearity. Then $\theta$ are algebra isomorphisms.
\end{proof}	

Denote by $PB_n$ the kernel of the surjective homomorphism $\bar{(\ )} \colon B_n \to S_n$ where $\sigma_i \mapsto (i,i+1)$.
Elements of $PB_n$ are called \textit{pure braids}. The subgroup $PB_n$ is the normal closure
of the elements $\sigma_1^2, \ldots, \sigma_{n-1}^2$.
Note that
\begin{equation}\label{EqYDdim2VarnothingYDThetaRho}
\left(\mathcal E_M\theta(\rho)\right)(u_1 \otimes u_2 \otimes \dots \otimes u_n) = \pm\, u_{\bar\rho^{-1}(1)} \otimes u_{\bar\rho^{-1}(2)} \otimes \dots \otimes u_{\bar\rho^{-1}(n)}
\end{equation}
 for all $u_1, \ldots, u_n \in \lbrace a, b\rbrace$ and $\rho\in B_n$.

For every $1\leqslant i < j \leqslant n$, where $n\geqslant 2$, choose some braid $\rho_{ij} \in B_n$
such that $\bar\rho_{ij} (1) = i$ and $\bar\rho_{ij} (2) = j$
and define 
$$q_{ij}^{(n)} := \frac{1}{2}\theta\left(\rho_{ij}\right)(\id_n-\tau_{1,1}^2\otimes \id_{n-2})\theta\left(\rho_{ij}\right)^{-1},\quad
r_{ij}^{(n)} := \frac{1}{2}\theta\left(\rho_{ij}\right)(\id_n+\tau_{1,1}^2\otimes \id_{n-2})\theta\left(\rho_{ij}\right)^{-1}.
$$
For our convenience, let $q_{ji}^{(n)} := q_{ij}^{(n)}$, $r_{ji}^{(n)} := r_{ij}^{(n)}$.

By~\eqref{EqYDdim2VarnothingYDThetaRho}, \begin{equation}\label{EqYDdim2nothingYDQij}\left(\mathcal E_M q^{(n)}_{ij}\right)(u_1 \otimes u_2 \otimes \dots \otimes u_n)
=\left\lbrace\begin{array}{cl}
u_1 \otimes u_2 \otimes \dots \otimes u_n & \text{if }
u_i \ne u_j, \\
0 & \text{otherwise}
\end{array}
\right.\end{equation}
and 
\begin{equation}\label{EqYDdim2nothingYDRij}\left(\mathcal E_M r^{(n)}_{ij}\right)(u_1 \otimes u_2 \otimes \dots \otimes u_n)
=\left\lbrace\begin{array}{cl}
u_1 \otimes u_2 \otimes \dots \otimes u_n & \text{if }
u_i = u_j, \\
0 & \text{otherwise}
\end{array}
\right.\end{equation}
for every $u_1,\dots,u_n\in \lbrace a,b \rbrace.$

Denote by $\Xi_n$ the set of all decompositions $\lbrace I, \bar I \rbrace$ where $I\sqcup \bar I = \lbrace 1,2,\dots,n\rbrace$, $I,\bar I \ne \varnothing$. Here we stress that $\lbrace I, \bar I \rbrace = \lbrace \bar I, I \rbrace$.

For every $\lbrace I, \bar I \rbrace \in \Xi_n$
define $$q^{(n)}_{\lbrace I, \bar I \rbrace} := \left( \prod\limits_{\substack{i \in I,\\ j \in \bar I}} q^{(n)}_{i j} \right) \left( \prod\limits_{\substack{i,j \in I,\\ i < j}} 
r^{(n)}_{ij} \right) \left( \prod\limits_{\substack{i,j \in \bar I,\\ i < j}} 
r^{(n)}_{ij} \right).$$
Let $$q^{(n)}_{\lbrace \varnothing, \bar\varnothing \rbrace} :=
\id_n - \sum\limits_{\lbrace I, \bar I \rbrace \in \Xi_n} q^{(n)}_{\lbrace I, \bar I \rbrace}$$
where for brevity we use the notation $\bar\varnothing := \lbrace 1,2,\dots,n\rbrace$.

Let $\tilde \Xi_n := \Xi_n \cup \bigl\lbrace\lbrace \varnothing, \bar\varnothing \rbrace \bigr\rbrace$.
Then, by~\eqref{EqYDdim2nothingYDQij} and~\eqref{EqYDdim2nothingYDRij},
\begin{equation}\label{EqYDdim2nothingYDQIbarI}\left(\mathcal E_M q^{(n)}_{\lbrace I, \bar I \rbrace}\right)(u_1 \otimes u_2 \otimes \dots \otimes u_n)
=\left\lbrace\begin{array}{cl}
 & \text{if }
u_i = u_j \text{ for all }i,j\in I, \\
u_1 \otimes u_2 \otimes \dots \otimes u_n & u_i = u_j \text{ for all }i,j\in \bar I, \\
& u_i \ne u_j \text{ for all }i\in I,\ j\in \bar I, \\
\\
0 & \text{otherwise}
\end{array}
\right.\end{equation}
for every $\lbrace I, \bar I \rbrace \in \tilde \Xi_n$.

\begin{theorem} Let  $\mathop\mathrm{char} \mathbbm k \ne 2$ and let $M = \langle a,b \rangle_{\mathbbm k}$
	be the Yetter~--- Drinfel'd module over $\mathbbm kC_2$ defined in Section~\ref{SubsectionYDdim2sigmaVect}. Then polynomial identities with coefficients in~$\mathbbm k$ of $M$
	as an $\varnothing$-magma in ${}_{\mathbbm{k}C_2}^{\mathbbm{k}C_2}\mathcal{YD}$
	  are generated by the set
	\begin{equation*}\begin{split}
			V_3 := \left\lbrace \tau_{1,1}^4-\id_2 \right\rbrace \cup			
			\left\lbrace \theta\left(\sigma_i^2 \rho - \rho \sigma_i^2 \right) \mid 
			i=1,\ldots, n-1,\ \rho\in PB_n,\ n\geqslant 2
			 \right\rbrace \\
			  \cup \left\lbrace\left. q^{(n)}_{ij} \theta(\rho) -  \theta(\rho) q^{(n)}_{ij}
			  \right| 1\leqslant i<j \leqslant n,\ \rho\in B_n,\ n\geqslant 2,\ \bar\rho(i)=i,\  \bar\rho(j)=j\right\rbrace \\
			  \cup \left\lbrace q^{(n)}_{\lbrace\lbrace 1,\dots,i \rbrace, \lbrace i+1,\dots, n \rbrace\rbrace}\left(\id_j \otimes \tau_{1,1} \otimes \id_{n-j-2} \right)
			 - q^{(n)}_{\lbrace\lbrace 1,\dots,i \rbrace, \lbrace i+1,\dots, n \rbrace\rbrace}\right. \\
\left|\ 0\leqslant i \leqslant n-1,\ 0\leqslant j \leqslant n-2,\ j\ne i-1
			   \right\rbrace.
	\end{split}\end{equation*}
	Moreover  $c_{m,n}(M) = 0$ for $m\ne n$, $c_{0,0}(M)=c_{1,1}(M)=1$ and for $n\geqslant 2$ we have
	 $$c_{n,n}(M) = \frac{1}{2}\binom{2n}{n} \sim \frac{ 4^n}{2\sqrt{\pi n}}
	\text{ as }n\to\infty.$$
\end{theorem}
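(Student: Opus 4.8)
The plan is to use the algebra isomorphism $\theta\colon \mathbbm{k}B_n \to \mathcal P(\varnothing,\mathbbm k)(n,n)$ from the preceding proposition, which turns the computation of $c_{n,n}(M)$ into the computation of the dimension of the image of the braid representation $\mathcal E_M\theta\colon \mathbbm{k}B_n \to \End_{\mathbbm k}(M^{\otimes n})$. First I would fix the basis of $M^{\otimes n}$ consisting of the $2^n$ words $u_1\otimes\dots\otimes u_n$ with $u_i\in\lbrace a,b\rbrace$; by~\eqref{EqYDdim2VarnothingYDThetaRho} the operator $\mathcal E_M\theta(\rho)$ is the \emph{signed} permutation of these words induced by $\bar\rho\in S_n$, so each block $M^{\otimes n}_{(k)}$ spanned by the words with exactly $k$ letters $a$ is invariant. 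Using the explicit formulas~\eqref{EqYDdim2nothingYDQij}, \eqref{EqYDdim2nothingYDRij} and~\eqref{EqYDdim2nothingYDQIbarI} for $q^{(n)}_{ij}$, $r^{(n)}_{ij}$ and $q^{(n)}_{\lbrace I,\bar I\rbrace}$, together with $\mathcal E_M(\tau_{1,1})^4=\id$, a direct substitution of basis words verifies that every element of $V_3$ is a polynomial identity in $M$: the relation $\tau_{1,1}^4-\id_2$ records that $\mathcal E_M(\tau_{1,1})$ has order $4$, and the pure-braid relations $\theta(\sigma_i^2\rho-\rho\sigma_i^2)$ express that each $\mathcal E_M(\sigma_i^2)$ is diagonal in the word basis.

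Next I would show that modulo $V_3$ the elements $q^{(n)}_{\lbrace I,\bar I\rbrace}$, $\lbrace I,\bar I\rbrace\in\tilde\Xi_n$, form a complete orthogonal system of idempotents: $\mathcal E_M(q^{(n)}_{\lbrace I,\bar I\rbrace})$ is the projection onto the two-dimensional span $\langle v_I,v_{\bar I}\rangle$, where $v_I$ is the word carrying $a$ exactly on $I$ (the span of the two monochromatic words when $\lbrace I,\bar I\rbrace=\lbrace\varnothing,\bar\varnothing\rbrace$), and they sum to $\id_n$ by the definition of $q^{(n)}_{\lbrace\varnothing,\bar\varnothing\rbrace}$. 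Inserting $\id_n=\sum q^{(n)}_{\lbrace I,\bar I\rbrace}$ on both sides of an arbitrary $\theta(\rho)$ reduces the description of $\mathcal P_{V_3}(\varnothing,\mathbbm k)(n,n)$ to the pieces $q^{(n)}_{\lbrace I,\bar I\rbrace}\,\theta(\rho)\,q^{(n)}_{\lbrace J,\bar J\rbrace}$. Since $\theta(\rho)$ permutes words by $\bar\rho$, such a piece vanishes unless $\lbrace I,\bar I\rbrace$ and $\lbrace J,\bar J\rbrace$ lie in the same $S_n$-orbit, i.e. unless $\lbrace|I|,|\bar I|\rbrace=\lbrace|J|,|\bar J|\rbrace$. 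The commutation relations $q^{(n)}_{ij}\theta(\rho)-\theta(\rho)q^{(n)}_{ij}$ (for $\bar\rho$ fixing $i,j$) and the relations collapsing a braiding internal to a monochromatic block then force the piece to depend only on the induced map of pairs: it is one-dimensional when $|I|\ne|\bar I|$ and two-dimensional when $|I|=|\bar I|$, the extra freedom being whether $\bar\rho$ preserves or interchanges the two halves. Choosing a representative braid for each admissible pair-of-pairs yields an explicit spanning set of $\mathcal P_{V_3}(\varnothing,\mathbbm k)(n,n)$.

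Finally I would prove that $\mathcal E_M$ is injective on this spanning set, so that it is in fact a basis and the functor $\mathcal E_M\colon \mathcal P_{V_3}(\varnothing,\mathbbm k)\to\mathbf{Vect}_{\mathbbm k}$ is faithful, whence $V_3$ generates all polynomial identities of $M$. This is where~\eqref{EqYDdim2nothingYDQIbarI} and~\eqref{EqYDdim2VarnothingYDThetaRho} do the work: the images are signed elementary maps between the spaces $\langle v_I,v_{\bar I}\rangle$, and distinct spanning elements are supported on distinct pairs of word-pairs, hence are linearly independent in $\End_{\mathbbm k}(M^{\otimes n})$. Counting the basis by orbits then gives, for $n\ge 2$,
\[ c_{n,n}(M)=\tfrac12\binom{2n}{n}, \]
where the monochromatic orbit $\lbrace\varnothing,\bar\varnothing\rbrace$ contributes $1$, each orbit of size-type $\lbrace k,n-k\rbrace$ with $0<k<n/2$ contributes $\binom nk^2$ (one-dimensional pieces between its $\binom nk$ pairs), and the middle orbit for $n$ even contributes $2\cdot\bigl(\tfrac12\binom{n}{n/2}\bigr)^2=\tfrac12\binom{n}{n/2}^2$ (two-dimensional pieces); these sum to $\tfrac12\sum_{k=0}^n\binom nk^2=\tfrac12\binom{2n}{n}$. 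The asymptotics $c_{n,n}(M)\sim 4^n/(2\sqrt{\pi n})$ follow from Stirling's formula, while $c_{m,n}(M)=0$ for $m\ne n$ is immediate from $\mathcal M(\varnothing)(m,n)=\varnothing$, and $c_{0,0}(M)=c_{1,1}(M)=1$ since $B_0$ and $B_1$ are trivial.

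The main obstacle is the middle step: showing, purely from the relations in $V_3$, that each Hom-piece $q^{(n)}_{\lbrace I,\bar I\rbrace}\,\mathcal P_{V_3}(\varnothing,\mathbbm k)(n,n)\,q^{(n)}_{\lbrace J,\bar J\rbrace}$ has exactly the asserted dimension $1$ or $2$. The delicate point is the sign in~\eqref{EqYDdim2VarnothingYDThetaRho}: a priori the two components $v_J\mapsto\pm v_I$ and $v_{\bar J}\mapsto\pm v_{\bar I}$ of a transporting braid could vary independently, which would double the count; it is precisely the pure-braid and $q^{(n)}_{ij}$-commutation relations that rigidify the relative sign and thereby produce the factor $\tfrac12$ distinguishing this braided computation from its symmetric counterpart in Section~\ref{SubsectionYDdim2sigmaVect}.
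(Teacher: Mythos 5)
Your proposal is correct and follows essentially the same route as the paper's proof: verify $V_3$ via the signed-permutation formula~\eqref{EqYDdim2VarnothingYDThetaRho}, establish that the $q^{(n)}_{\lbrace I,\bar I\rbrace}$ form a complete orthogonal system of idempotents modulo $V_3$, insert $\id_n=\sum q^{(n)}_{\lbrace I,\bar I\rbrace}$ to reduce to the Hom-pieces $q^{(n)}_{\lbrace I,\bar I\rbrace}\theta(\rho)q^{(n)}_{\lbrace J,\bar J\rbrace}$, bound their dimensions by $1$ or $2$ using the commutation relations (the paper does this by conjugating to the standard decomposition $\lbrace\lbrace 1,\dots,i\rbrace,\lbrace i+1,\dots,n\rbrace\rbrace$ and factoring a stabilizing braid into a pure braid times internal elementary braidings), and then check linear independence of the images under $\mathcal E_M$ before counting $\tfrac12\sum_k\binom{n}{k}^2=\tfrac12\binom{2n}{n}$. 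The obstacle you single out at the end is exactly the step where the paper invests the most effort, and your sign-rigidification remark correctly identifies why the braided count is half the symmetric one.
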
	
\begin{proof} By~\eqref{EqYDdim2VarnothingYDThetaRho}, the image of every pure braid
	may change only the sign of a tensor product of several $a$ and $b$. Therefore, such images commute
	and $\theta\left(\sigma_i^2 \rho - \rho \sigma_i^2 \right)\equiv 0$ holds in $M$.
	An explicit check 
	shows that the other elements of the set $V_3$ are polynomial identities in $M$ too.
	
	Denote the images of morphisms from $\mathcal P(\varnothing, \mathbbm k)(n,n)$ in $\mathcal P_{V_3}(\varnothing, \mathbbm k) (n,n)$ by the same symbols. Then $\tau_{1,1}^4=\id_2$ implies that both $q^{(n)}_{ij}$
	and $r^{(n)}_{ij}$ are idemponents and $$q^{(n)}_{ij}r^{(n)}_{ij}=r^{(n)}_{ij}q^{(n)}_{ij}=0.$$
	Now from $\theta\left(\sigma_i^2\right) \theta\left(\rho\right) =  \theta\left(\rho\right) \theta\left(\sigma_i^2\right)$
	for every $1\leqslant i \leqslant n-1$ and $\rho\in PB_n$ it follows that the images of every two pure braids commute and $q^{(n)}_{\lbrace I, \bar I\rbrace}$  are orthogonal idempotents for $\lbrace I, \bar I\rbrace \in \tilde \Xi_n$.
	
	The equality $q^{(n)}_{ij} \theta(\rho) = \theta(\rho) q^{(n)}_{ij}$ for $\rho\in B_n$,
	such that $\bar\rho(i)=i$ and $\bar\rho(j)=j$, implies
	 $r^{(n)}_{ij} \theta(\rho) = \theta(\rho) r^{(n)}_{ij}$ since $r^{(n)}_{ij} = \id_n - q^{(n)}_{ij}$.
	 In particular, $q^{(n)}_{ij}$ and $r^{(n)}_{ij}$ do not depend on the braid $\rho_{ij}$ used in their definition.
	Moreover, $q^{(n)}_{\lbrace I, \bar I \rbrace} \theta(\rho) = \theta(\rho) q^{(n)}_{\lbrace I, \bar I \rbrace}$
	for all $\lbrace I, \bar I \rbrace \in \tilde \Xi_n$ and $\rho\in B_n$ such that $\bar\rho(I)= I$.
	Finally, $\theta(\rho)q^{(n)}_{\lbrace J, \bar J\rbrace} \theta(\rho)^{-1} = q^{(n)}_{\lbrace I, \bar I \rbrace}$ for every $\lbrace I, \bar I\rbrace, \lbrace J, \bar J\rbrace \in \tilde \Xi_n$ and $\rho\in B_n$ such that $\bar\rho(J)= I$.
	Hence \begin{equation}\label{EqYDdim2VarnothingYDIJDifferent}q^{(n)}_{\lbrace I, \bar I\rbrace} \theta(\rho) q^{(n)}_{\lbrace J, \bar J\rbrace} =
	q^{(n)}_{\lbrace I, \bar I\rbrace} \theta(\rho)q^{(n)}_{\lbrace J, \bar J\rbrace} \theta(\rho)^{-1} \theta(\rho)  =
	q^{(n)}_{\lbrace I, \bar I\rbrace} q^{(n)}_{\lbrace J, \bar J\rbrace} \theta(\rho)^{-1}=0\end{equation}
	if $\bar \rho(J)\ne I$ and $\bar \rho(J)\ne \bar I$.

Let $\lbrace I, \bar I\rbrace, \lbrace J, \bar J\rbrace \in \tilde \Xi_n$.	
Consider the vector space $\left\langle\left. q^{(n)}_{\lbrace I, \bar I\rbrace} \theta(\rho) q^{(n)}_{\lbrace J, \bar J\rbrace}
\right| \rho\in B_n\right\rangle_{\mathbbm k}$. By~\eqref{EqYDdim2VarnothingYDIJDifferent},
it this space is zero if $|I|\ne |J|$ and $|I|\ne n-|J|$. Suppose $|I| = |J|=: i$. (If $|I|= n-|J|$, we just swap $J$ and $\bar J$.)
Choose $\rho_1, \rho_2\in B_n$ such that $\bar\rho_1(I)=\bar\rho_2(J)=\lbrace 1, \ldots, i\rbrace$.
Then \begin{equation*}\begin{split}\theta(\rho_1)\left\langle\left. q^{(n)}_{\lbrace I, \bar I\rbrace} \theta(\rho) q^{(n)}_{\lbrace J, \bar J\rbrace}
\right| \rho\in B_n\right\rangle_{\mathbbm k} \theta(\rho_2)^{-1} \\= 
\left\langle\left. \theta(\rho_1)q^{(n)}_{\lbrace I, \bar I\rbrace} \theta(\rho_1)^{-1} \theta\left(\rho_1\rho\rho_2^{-1}\right)\theta(\rho_2) q^{(n)}_{\lbrace J, \bar J\rbrace} \theta(\rho_2)^{-1}
\right| \rho\in B_n\right\rangle_{\mathbbm k} \\
=\left\langle\left. q^{(n)}_{\lbrace\lbrace 1,\ldots, i\rbrace, \lbrace i+1,\ldots, n\rbrace\rbrace} \theta(\rho) q^{(n)}_{\lbrace\lbrace 1,\ldots, i\rbrace, \lbrace i+1,\ldots, n\rbrace\rbrace}\right| \rho\in B_n\right\rangle_{\mathbbm k}\end{split}
\end{equation*}
If $\bar\rho(\lbrace 1,\ldots, i\rbrace) \ne \lbrace 1,\ldots, i\rbrace$ 
and $\bar\rho(\lbrace 1,\ldots, i\rbrace) \ne \lbrace i+1,\ldots, n\rbrace$, then, by~\eqref{EqYDdim2VarnothingYDIJDifferent}, $q^{(n)}_{\lbrace\lbrace 1,\ldots, i\rbrace, \lbrace i+1,\ldots, n\rbrace\rbrace} \theta(\rho) q^{(n)}_{\lbrace\lbrace 1,\ldots, i\rbrace, \lbrace i+1,\ldots, n\rbrace\rbrace} = 0$.
If $\bar\rho(\lbrace 1,\ldots, i\rbrace) = \lbrace 1,\ldots, i\rbrace$,
then $\theta(\rho)$ is a product of an image of a pure braid, which can be expressed via $q^{(n)}_{k\ell}$ and $r^{(n)}_{k\ell}$, and elements $\id_j \otimes \tau_{1,1} \otimes \id_{n-j-2} $
for $j\ne i-1$, which by $V_3$ do not alter $q^{(n)}_{\lbrace\lbrace 1,\ldots, i\rbrace, \lbrace i+1,\ldots, n\rbrace\rbrace}$.
If $\bar\rho(\lbrace 1,\ldots, i\rbrace) = \lbrace i+1,\ldots, n\rbrace$, then
$\rho=\rho_0 \rho_1$ where $\rho_0$ is a fixed braid such that $\bar\rho_0(\lbrace 1,\ldots, i\rbrace) = \lbrace i+1,\ldots, n\rbrace$ and $\rho_1$ is a braid such that $\bar\rho_1(\lbrace 1,\ldots, i\rbrace) = \lbrace 1,\ldots, i\rbrace$. Now the argument above can be applied to $\rho_1$.
Hence we get that
$$\dim\left\langle\left. q^{(n)}_{\lbrace I, \bar I\rbrace} \theta(\rho) q^{(n)}_{\lbrace J, \bar J\rbrace}
\right| \rho\in B_n\right\rangle_{\mathbbm k} $$ is zero if $|I|\ne |J|$ and $|I|\ne n-|J|$,
does not exceed $1$ if $|I| = |J| \ne \frac{n}{2}$ and does not exceed~$2$ if $|I| = |J| = \frac{n}{2}$.

For every $\lbrace I, \bar I\rbrace, \lbrace J, \bar J\rbrace \in \tilde \Xi_n$,
$|I| = |J|$,
fix some braid $\rho_{\lbrace I, \bar I\rbrace}\in B_n$ such that
$\bar\rho_{\lbrace I, \bar I\rbrace}(J)=I$.
In addition, for every $\lbrace I, \bar I\rbrace, \lbrace J, \bar J\rbrace \in \tilde \Xi_n$,
$|I| = |J| = \frac{n}{2}$,
fix some braid $\xi_{\lbrace I, \bar I\rbrace}\in B_n$ such that
$\bar\xi_{\lbrace I, \bar I\rbrace}(J)=\bar I$.

Inserting $\id_n = \sum\limits_{\lbrace I, \bar I\rbrace \in \tilde \Xi_n} q^{(n)}_{\lbrace I, \bar I\rbrace}$ and using the argument above, we obtain that
$\mathcal P_{V_3}(\varnothing, \mathbbm k) (n,n)$ is
the linear span of the $V_3$-relative $\varnothing$-polynomials
\begin{equation}\label{EqBasisYDkC2nothingYD} \begin{split}
q^{(n)}_{\lbrace I, \bar I\rbrace} \theta(\rho_{\lbrace I, \bar I\rbrace}) q^{(n)}_{\lbrace J, \bar J\rbrace}
\text{ for }\lbrace I, \bar I\rbrace, \lbrace J, \bar J\rbrace \in \tilde \Xi_n,\ |I|=|J|,\\
q^{(n)}_{\lbrace I, \bar I\rbrace} \theta(\xi_{\lbrace I, \bar I\rbrace}) q^{(n)}_{\lbrace J, \bar J\rbrace}
\text{ for }\lbrace I, \bar I\rbrace, \lbrace J, \bar J\rbrace \in \tilde \Xi_n,\ |I| = |J| = \frac{n}{2}. 
\end{split}\end{equation}

By~\eqref{EqYDdim2VarnothingYDThetaRho} and~\eqref{EqYDdim2nothingYDQIbarI}, the $V_3$-relative $\varnothing$-polynomials~\eqref{EqBasisYDkC2nothingYD} form a basis in $\mathcal P_{V_3}(\varnothing, \mathbbm k) (n,n)$, the restriction of $\mathcal E_M$ on $\mathcal P_{V_3}(\varnothing, \mathbbm k) (n,n)$ is an injection and
$V_3$
generates  $\varnothing$-polynomial identities in $M$
with coefficients in~$\mathbbm k$.

For $n=2k+1$ we have
$$c_{n,n}(M) = \sum\limits_{i=0}^k  \binom{2k+1}{i}^2= \frac{1}{2}
\sum\limits_{i=0}^n  \binom{n}{i}^2
= \frac{1}{2}\binom{2n}{n} \sim \frac{ 4^n}{2\sqrt{\pi n}}
\text{ as }n\to\infty,$$
for $n=2k$
we have
\begin{equation*}\begin{split}c_{n,n}(M) = 2\left(\frac{1}{2} \binom{2k}{k} \right)^2 + \sum\limits_{i=0}^{k-1}  \binom{2k}{i}^2=
 \frac{1}{2}
 \sum\limits_{i=0}^n  \binom{n}{i}^2 \\
 = \frac{1}{2}\binom{2n}{n} \sim \frac{ 4^n}{2\sqrt{\pi n}}
 \text{ as }n\to\infty.\end{split}\end{equation*}
\end{proof}

\section*{Acknowledgements}

The author is grateful to the referee for helpful remarks.

\end{document}